\DeclareMathOperator{\re}{Re}
\DeclareMathOperator{\Orth}{O}
\DeclareMathOperator{\SO}{SO}
\DeclareMathOperator{\U}{U}
\DeclareMathOperator{\SU}{SU}
\DeclareMathOperator{\SL}{SL}
\DeclareMathOperator{\PSL}{PSL}
\DeclareMathOperator{\PSU}{PSU}
\DeclareMathOperator{\Isom}{Isom}
\newcommand{\R}{\mathbb R}
\newcommand{\C}{\mathbb C}
\newcommand{\Z}{\mathbb Z}
\newcommand{\N}{\mathbb N}
\newcommand{\diff}{\text{\rm d}}
\newcommand{\del}{\partial}
\newcommand{\so}{\mathfrak{so}}
\newcommand{\g}{\mathfrak{g}}
\renewcommand{\P}{\mathbb P}
\renewcommand{\H}{\mathbb H}
\renewcommand{\u}{\mathfrak{u}}
\theoremstyle{plain}
	\newtheorem{theorem}{Theorem}
	\newtheorem{proposition}[theorem]{Proposition}
	\newtheorem{lemma}[theorem]{Lemma}
	\newtheorem{corollary}[theorem]{Corollary}
	\newtheorem{question}[theorem]{Question}
\theoremstyle{definition}
\theoremstyle{plain}
	\newtheorem*{theorem*}{Theorem}
	\newtheorem*{proposition*}{Proposition}
	\newtheorem*{lemma*}{Lemma}
	\newtheorem*{corollary*}{Corollary}
	\newtheorem*{conjecture*}{Conjecture}
\theoremstyle{definition}
	\newtheorem*{definition*}{Definition}
	\newtheorem*{remark*}{Remark}
	\newtheorem*{remarks*}{Remarks}
\def\blfootnote{\xdef\@thefnmark{}\@footnotetext}
\begin{document}

\title{Hyperbolic geometry and non-Kähler manifolds\\ with trivial canonical bundle}
\author{Joel Fine and Dmitri Panov}
\date{}

\maketitle

%\blfootnote{First author supported by an FNRS chargé de recherche fellowship.}
%\blfootnote{Second author supported by EPSRC grant EP/E044859/1.}

\abstract{
We use hyperbolic geometry to construct simply-connected symplectic or complex manifolds with trivial canonical bundle and with no compatible Kähler structure. We start with the desingularisations of the quadric cone in $\C^4$: the smoothing is a natural $S^3$-bundle over $H^3$, its \emph{holomorphic} geometry is determined by the hyperbolic metric; the small-resolution is a natural $S^2$-bundle over $H^4$ with \emph{symplectic} geometry determined by the metric. Using hyperbolic geometry, we find orbifold quotients with trivial canonical bundle; smooth examples are produced via crepant resolutions. In particular, we find the first example of a simply-connected symplectic 6-manifold with $c_1=0$ that does not admit a compatible Kähler structure. We also find infinitely many distinct complex  structures on $2(S^3\times S^3)\#(S^2\times S^4)$ with trivial canonical bundle. Finally, we explain how an analogous construction for hyperbolic manifolds in higher dimensions gives symplectic non-Kähler ``Fano'' manifolds of dimension 12 and higher.
}

\setcounter{tocdepth}{2}
\tableofcontents

\section{Introduction}

The main goal of this article is to describe how hyperbolic geometry can be used to construct simply-connected complex and symplectic manifolds with \emph{trivial canonical bundle}, which admit \emph{no compatible Kähler structure}. In the symplectic case, trivial means $c_1=0$, whilst in the complex case, trivial means holomorphically trivial. Our examples have real dimension six. Hyperbolic geometry in dimension three gives complex examples whilst hyperbolic geometry in dimension four leads to symplectic examples. 

Using this approach we find a symplectic structure on a certain 6-manifold. This 6-manifold is simply connected, has vanishing first Chern class and admits no compatible Kähler structure, giving the first such 6-dimensional example, at least to the best of our knowledge. On the complex side we find, amongst other things, infinitely many distinct complex structures on $2(S^3 \times S^3)\#(S^2 \times S^4)$ all with trivial canonical bundle. 

Before giving an outline of the constructions, we begin by recalling some background.

\subsection{Non-Kähler examples in dimension four}

First consider the case of real dimension four. It follows from the classification of compact complex surfaces that the only simply-connected compact complex surfaces with trivial canonical bundle are K3 surfaces and so, in particular, are Kähler. In the symplectic case the situation is not yet understood, but it is a folklore conjecture that Kähler K3 surfaces again provide the only examples. So, conjecturally,  in dimension four all simply-connected symplectic manifolds with $c_1=0$ are Kähler too. If we believe this conjecture, to find the first non-Kähler examples of simply-connected manifolds with trivial canonical bundle, we should look to higher dimensions.

\subsection{Non-Kähler examples in dimension six}

In dimension six, much use has been made of ordinary double-points and their desingularisations. An ordinary double-point is modelled locally on a neighbourhood of the origin in the quadric cone $\sum z_j^2=0$ in $\C^4$. To desingularise the cone, one can smooth it to obtain the smooth affine quadric $\sum z_j^2 = 1$; this replaces the double point by a three-sphere. Alternatively, one can resolve the double-point by the so-called small resolution, which is the total space of $\mathcal O(-1) \oplus \mathcal O(-1)  \to \C\P^1$; here the double point is replaced by a two-sphere. (We will give more detail about these desingularisations later in \S\ref{hyperbolic geometry and the conifold}.) Note that the double point and both desingularisations have trivial canonical bundle. 

In the complex setting this approach was used by Friedman \cite{friedman} and Tian \cite{tian}, following ideas of Clemens \cite{clemens}, to produce complex manifolds with trivial canonical bundle. Starting from a complex manifold $X$ with trivial canonical bundle and ordinary double points they considered the manifold $X'$ obtained by smoothing all the double points. Under certain conditions, the complex structure on $X$ can be smoothed to give a complex structure on $X'$, again with trivial canonical bundle. In this way one finds a family of complex structures on $n(S^3 \times S^3)$ for all $n\geq 2$,  with trivial canonical bundle. (The article of Lu--Tian \cite{lu-tian} explains the details for these particular smoothings.) These manifolds are clearly not Kähler.

There is an analogous story on the symplectic side, explained in the article of Smith--Thomas--Yau \cite{smith-thomas-yau}. Here, one starts with a symplectic manifold $Y$ with ordinary double points and trivial canonical bundle and considers the manifold $Y'$ obtained by the small resolution of all the double points. Under certain conditions, the symplectic structure on $Y$ can be resolved to give a symplectic structure on $Y'$ with $c_1=0$. In this way Smith--Thomas--Yau produced many interesting examples of simply-connected symplectic manifolds with $c_1=0$ which are widely believed to be non-Kähler. However, in contrast with the complex side, they were unable to find examples which violate the standard Kähler topological restrictions. It is still unknown if any of the examples they give actually are non-Kähler.

\subsection{Non-Kähler examples in dimension $4n$}

In \cite{guan}, Guan constructs examples of simply-connected compact complex-symplectic manifolds of dimension $4n$ for $n \geq 2$, which do not admit a compatible Kähler structure. (See also the article \cite{bogomolov} of Bogomolov for an alternative exposition of Guan's construction.) Taking the real part of the complex symplectic form, one obtains a symplectic manifold in the real sense which has $c_1=0$. In \cite{guan}, Guan proves that his examples are not diffeomorphic to hyperkähler manifolds. In fact even more is true; whilst it is not proven directly in \cite{guan}, Guan has informed us that it follows from the results of \cite{guan} that his manifolds admit no Kähler structure whatsoever. 

Guan's construction uses Kodaira--Thurston surfaces to produce a single example of a simply-connected $4n$-manifold for each $n \geq 2$ which admits complex-symplectic structures. One might think of Guan's construction as a non-Kähler analogue of the infinite series of hyperkähler manifolds coming from the Hilbert scheme of $n$ points on a hyperkähler surface. In contrast, the construction we present here produces large numbers of topologically distinct symplectic manifolds all of the same dimension. Whilst we describe just one in this article, we produce examples with arbitrarily high Betti numbers in a sequel \cite{fine-panov3}. The idea is outlined here in \S\ref{concluding remarks}.

\subsection{A hyperbolic picture of the desingularisations}

As in the articles \cite{clemens,friedman,tian,smith-thomas-yau} mentioned above, the construction of symplectic and complex manifolds that we explore in this article relies on the desingularisations of the threefold quadric cone. However, rather than using the small resolution or smoothing as local models for desingularising double-points, here we consider their \emph{global} geometry. 

\subsubsection{$H^4$ and the small resolution}

As we explain in \S\ref{hyperbolic geometry and the small resolution}, the symplectic geometry of the small resolution $R = \mathcal O(-1)\oplus \mathcal O(-1)$ is intimately related to hyperbolic geometry in dimension four. $R$ is a natural $S^2$-bundle over $H^4$, with the symplectic Calabi--Yau structure on $R$ determined by the metric on $H^4$. Indeed, $R$ is symplectomorphic to the twistor space of $H^4$; the symplectic structure on the twistor space was first defined by Reznikov \cite{reznikov} and  Davidov--Mu\v{s}karov--Grantcharov \cite{davidov} and a symplectomorphism with $R$ was given in \cite{fine-panov}. In \S\ref{quaternionic description} and \S\ref{coadjoint description} we give two additional descriptions: symplectically, $R$ can be seen as a coadjoint orbit of $\SO(4,1)$, or as a certain pseudo-Kähler manifold, via a construction involving quaternions, analogous to the twistor fibration $\C\P^3 \to \H\P^1\cong S^4$. It follows that any hyperbolic 4-manifold carries an $S^2$-bundle---its twistor space---whose total space is a symplectic manifold with trivial canonical bundle. 

To produce a simply connected example, in \S\ref{symplectic example} we use a version of the Kummer construction. Recall that in the Kummer construction, one considers the quotient of an abelian surface by the central involution $z\mapsto -z$. Resolving the 16 orbifold points gives a K3 surface. 

The role of the abelian surface in our situation is played by a beautiful hyperbolic 4-manifold called the Davis manifold \cite{davis} (see also the description in \cite{ratcliffe-tschantz}). This manifold is constructed by gluing opposite 3-faces of a 120-cell in $H^4$, a certain regular four-dimensional polyhedron (so-called because it has 120 three-dimensional faces, each a regular dodecahedron). 

An important point for our purposes is that reflection in the centre of the 120-cell gives an isometry of the Davis manifold $M$. This isometry gives a quotient $M/\Z_2$ which is a simply-connected singular manifold with 122 isolated singularities modelled on $z\mapsto -z$, just as in the Kummer construction. In terms of the symplectic $S^2$-bundle $Z \to M$, the isometry induces a symplectic action of $\Z_2$ on $Z$ which fixes 122 fibres. The quotient is a symplectic orbifold with trivial canonical bundle, with singularities along 122 $S^2$'s. Making a symplectic crepant resolution of these singularities gives a smooth simply-connected symplectic manifold $\hat Z$. As we will see, $b_3(\hat Z)=0$ which can never happen for a simply-connected Kähler manifold with $c_1=0$. 

\subsubsection{$H^3$ and the smoothing}

There is an analogous story for the smoothing $S = \{\sum z_j^2=1\}$, this time relating the complex geometry of $S$ with hyperbolic geometry in dimension three. $S$ is a natural $S^3$-bundle over $H^3$ with the complex geometry of $S$ determined by the metric on $H^3$. One way to see this is to note that $S \cong \SL(2,\C)$; the action of $\SU(2)$ on $\SL(2,\C)$ makes it a principal bundle over $\SL(2,\C)/\SU(2) \cong H^3$, the principal spin bundle of $H^3$. As a consequence, any hyperbolic 3-manifold carries an $S^3$-bundle whose total space is a complex manifold with trivial canonical bundle.  

To produce simply-connected examples, in \S\ref{complex examples} we consider hyperbolic orbifold metrics on $S^3$ with cone angle $2\pi/m$ along a knot $K$. It is a standard fact that there is a smooth hyperbolic manifold $M$ admitting an isometric $\Z_m$-action with quotient $M\to S^3$ branched over $K$. Now, as $M$ is hyperbolic, its spin-bundles are complex manifolds with trivial canonical bundle. As we will show, provided the spin-structure  $P \to M$ is well-chosen, the generator of the $\Z_m$-action lifts to $P$ where it generates a $\Z_{2m}$-action (covering the $\Z_m$-action on $M$ via the projection $\Z_{2m} \to \Z_m$). 

The quotient $P/\Z_{2m}$ is just the quotient of the frame bundle $Q \to M$ by the lift of the action of $\Z_m$. It is straightforward to see that $\Z_m$ acts freely on $Q$ and so $Q/\Z_m$ is smooth and has fundamental group equal to the original orbifold fundamental group of the metric on $S^3$. To remedy this, we ``twist'' the $\Z_{2m}$-action on $P$ around the knot $K$; this induces fixed points in the fibres over $K$ and results in a singular but simply-connected quotient of $P$. Finally, we obtain a smooth manifold by taking a crepant resolution.

The examples we find this way have a smooth surjection $X \to S^3$. Away from the knot, this makes $X$ a trivial fibration $S^3 \times (S^3\setminus K)$. These threefolds can never be Kähler; as we explain, they admit a  $\C^*$-action with no fixed points. This construction provides, amongst other things, infinitely many distinct complex structures with trivial canonical bundle on $2(S^3\times S^3)\#(S^2\times S^4)$.

\subsection{Symplectic non-Kähler ``Fano'' manifolds}

In \S\ref{symplectic fanos} we outline a relationship between symplectic and hyperbolic geometry in all even dimensions: a hyperbolic manifold of dimension $2n$ is the base of a fibre bundle whose total space is a symplectic manifold of real dimension $n(n+1)$. (The fibre bundle is the twistor space; these symplectic manifolds were first considered by Reznikov \cite{reznikov}.) When $n=1$ the fibres are zero-dimensional and the total space is just the original surface. In this case, the symplectic manifold is ``general type'' with symplectic class a negative multiple of $c_1$. When $n=2$, we have the situation described above, for which $c_1=0$. For all higher dimensions, however, it turns out that the symplectic manifold has symplectic class equal to a positive multiple of $c_1$. No compact example produced this way can be Kähler (e.g., for fundamental group reasons). To the best of our knowledge, these examples (found first by Reznikov) are the first symplectic non-Kähler manifolds which have symplectic class equal to a positive multiple of $c_1$, symplectic analogues of Fano varieties.

It is interesting to compare this with the situation in dimension four. There, it is a consequence of the work of Gromov \cite{gromov}, Taubes \cite{taubes} and McDuff \cite{mcduff} that a symplectic Fano 4-manifold must be Kähler. Meanwhile, the lowest dimension attained by our construction is twelve. It is natural to ask for the minimal dimension in which non-Kähler examples occur. Do they exist in dimension six? The symplectic Fanos coming from hyperbolic $2n$-manifolds also have the property that $c_1^n$ can be arbitrarily large (it is essentially the volume of the hyperbolic $2n$-manifold). Again, it seems natural to ask for the minimal dimension in which Fano manifolds exist for which $c_1^n$ can be made arbitrarily large.

\subsection{Acknowledgements}

It is a pleasure to thank the following people for discussions held during the course of this work: Mohammed Abouzaid, Michel Boileau, Alessio Corti, Simon Donaldson, Daniel Guan, Maxim Kontsevich, Federica Pasquotto, Vicente Mu\~noz, Brendan Owens, Simon Salamon, Paul Seidel, Ivan Smith, Burt Totaro, Richard Thomas, Henry Wilton, Claire Voisin, Jean-Yves Welschinger and Dominic Wright. We would also like to thank an anonymous reader who pointed out an error in an earlier version of this work. 

The first author was supported by an FNRS chargé de recherche fellowship. The 
second author was supported by EPSRC grant EP/E044859/1.

\section{Hyperbolic geometry and the conifold}
\label{hyperbolic geometry and the conifold}

\subsection{Desingularisations of the conifold}

Fix a non-degenerate complex quadratic form $q$ on $\C^4$. The \emph{conifold} is the affine quadric cone $Q = \{q(\zeta) =0 \}$. We will consider two well-studied ways to remove the singularity at the origin, giving smooth Kähler manifolds with trivial canonical bundle. We describe this briefly here; for more details we refer to \cite{smith-thomas-yau}, from where we originally learnt this material. 

One desingularisation is the smooth affine quadric $S = \{q(\zeta) = 1\}$, called the \emph{smoothing} of $Q$. $S$ is a Kähler manifold with trivial canonical bundle, although we will be concerned only with the \emph{complex} geometry of $S$. 

Another way to desingularise the conifold is to take a resolution. Choose coordinates $(x,y,w,z)$ so that the quadratic form is given by $q(x,y,z,w,) = xw-yz$. Let $R$ denote the total space of $\pi \colon \mathcal O(-1) \oplus \mathcal O(-1) \to \C\P^1$. Each summand of $R$ has a natural map $\mathcal O(-1) \to \C^2$; these combine to give a map $p \colon R \to \C^2 \oplus \C^2 \cong \C^4$.  Here, we use the identification $((a,b),(c,d)) \mapsto (a,b,c,d)$. The image of $p$ consists of points $(x,y,z,w)$ such that $[x:y] = [z:w]$, i.e., such that $xw-yz=0$, which is the conifold. So  $p \colon R \to Q$ is a resolution, called the \emph{small resolution}, in which the singularity has been replaced by the zero section in~$R$. 

In this description, we could equally have chosen to identify $\C^2 \oplus \C^2 \cong \C^4$ by $((a,b),(c,d)) \mapsto (a,c,b,d)$. With this choice, the image of $p \colon R \to \C^4$ consists of points $(x,y,z,w)$ such that $[x:z] = [y:w]$ which is again the conifold $xw-yz=0$. So, in fact, there are two inequivalent ways to view $R$ as the small resolution of $Q$. Put another way, there are \emph{two} small resolutions $R_{\pm}\to Q$; each is isomorphic to $\mathcal O(-1)\oplus \mathcal O(-1)$ abstractly, but there is no isomorphism between them which respects the projections to $Q$.

There is an alternative coordinate-free description of the small resolutions. Blow up the origin in $\C^4$ to obtain $\widetilde \C^4$. The proper transform $\widetilde Q$ of $Q$ meets the exceptional $\C\P^3 \subset \widetilde \C^4$ in a quadric surface. This surface is biholomorphic to $\C\P^1 \times \C\P^1$ and each of the rulings has negative normal bundle. Blowing down one or other of the rulings gives the two small resolutions $R_{\pm} \to Q$ of the conifold.

Either resolution $R$ is a Kähler manifold with trivial canonical bundle, although we will be concerned only with the \emph{symplectic} geometry of $R$. The symplectic form is given by 
$$
\omega_R = \pi^* \omega_{\C\P^1} + p^* \omega_{\C^4}
$$
where $\pi \colon R \to \C\P^1$ and $p \colon R \to Q \subset\C^4$ are the vector-bundle and resolution projections respectively. 

The first hint of a link with twistor geometry is provided by considering the symplectic action of $\SO(4)$ on $R$. The Hermitian metric and complex quadratic form on $\C^4$ define a choice of conjugation map (the real points are those on which the Hermitian and complex forms agree). Then $\SO(4)$ is the subgroup of $\U(4)$ which commutes with this conjugation. In this way $\SO(4)$ acts by Kähler isometries on the conifold $Q$ and this action lifts to a Kähler action on the small resolutions $R_\pm$. The action on the exceptional $\C\P^1$ in $R_\pm$ is given by one or other of the projections $\SO(4) \to \PSU(2)$ arising from the exceptional isomorphism
\begin{equation}\label{real exceptional isomorphism}
\SO(4) \cong \frac{\SU(2)\times \SU(2)}{\pm1}.
\end{equation}

\subsection{Hyperbolic geometry and the smoothing}
\label{S and H^3}

To describe the connection between hyperbolic geometry and $S$ the smoothing, use coordinates $(x, y, z, w)$ in which the quadratic form $q$ is given by $q(x,y,z,w) = xw-yz$. Identify $\C^4$ with the set of complex $2\times 2$ matrices by
$$
\left(
	\begin{array}{cc}
		x & y\\
		z & w
	\end{array}
\right)
\mapsto
(x,y,z,w).
$$
When evaluated on a matrix $A$, the quadratic form is $q(A) = \det A$. Hence, in this picture, $S \cong \SL(2, \C)$ is given by matrices with determinant 1. Consider the action of $\SU(2)$ on $\SL(2,\C)$ given by multiplication on the left; this makes $\SL(2,\C)$ into a principal $\SU(2)$-bundle over the symmetric space $\SL(2,\C)/\SU(2)$ which is precisely hyperbolic space $H^3$. The bundle $\SL(2,\C) \to H^3$ is the principal spin bundle of $H^3$. To verify this note that $\Isom(H^3) \cong \PSL(2,\C)$ acts freely and transitively on the frame bundle of $H^3$; so $\PSL(2,\C)$ can be identified with the frame bundle of $H^3$  whilst its double cover $\SL(2,\C)$ is identified with the principal spin bundle. 

As it is a complex Lie group,  $\SL(2,\C)$ has a holomorphic volume form which is invariant under right-multiplication. It will be essential to us that this same form is also invariant under \emph{left}-multiplication; in other words, that $\SL(2,\C)$ is a complex unimodular group. Given $P,Q \in \SL(2,\C)$, the map $A \mapsto P^{-1}AQ$ sets up an isomorphism 
\begin{equation}\label{exceptional isomorphism}
\SO(4,\C) \cong \frac{\SL(2,\C) \times \SL(2,\C)}{\pm 1}.
\end{equation}
(This is, of course, just the complexification of (\ref{real exceptional isomorphism}).) Unimodularity of $\SL(2,\C)$ now amounts to the following:

\begin{proposition}\label{action on S}
$\SO(4,\C)$ acts by biholomorphisms on $S$. Moreover, $S$ admits an invariant holomorphic volume form.
\end{proposition}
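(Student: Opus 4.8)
The plan is to read the proposition off the exceptional isomorphism (\ref{exceptional isomorphism}) together with the fact that $\SL(2,\C)$ is unimodular. For the first assertion there is almost nothing to prove: by definition $\SO(4,\C)$ consists of linear automorphisms of $\C^4$ preserving $q$, so it preserves the level set $S=\{q=1\}$, and because it acts by \emph{linear} maps its restriction to the complex submanifold $S$ is an action by biholomorphisms. Under the identification of $\C^4$ with $2\times2$ matrices that sends $q$ to $\det$, we have $S\cong\SL(2,\C)$, and (\ref{exceptional isomorphism}) tells us that every element of $\SO(4,\C)$ acts on $S$ as $A\mapsto P^{-1}AQ$ for some $(P,Q)\in\SL(2,\C)\times\SL(2,\C)$ — a well-defined map, since replacing $(P,Q)$ by $(-P,-Q)$ changes nothing. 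Thus, modulo the biholomorphism $S\cong\SL(2,\C)$, the $\SO(4,\C)$-action is generated by the left and right translations of the group $\SL(2,\C)$.

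The real content is the invariant volume form. Being a complex Lie group of complex dimension $3$, $\SL(2,\C)$ carries a right-invariant, nowhere-vanishing holomorphic $3$-form $\Omega$, obtained by right-translating any nonzero element of $\Lambda^3\esl(2,\C)^*$; this is a holomorphic volume form, unique up to a constant. I claim $\Omega$ is also left-invariant. Indeed, for $g\in\SL(2,\C)$ the form $L_g^*\Omega$ is again right-invariant, since left and right translations commute, so $L_g^*\Omega=\chi(g)\,\Omega$ for some $\chi(g)\in\C^*$; one checks readily that $\chi\colon\SL(2,\C)\to\C^*$ is a holomorphic group homomorphism (the modular function; its explicit form plays no role here). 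But $\SL(2,\C)$ is perfect — it equals its own commutator subgroup — so it admits no nontrivial homomorphism to the abelian group $\C^*$, whence $\chi\equiv1$ and $\Omega$ is bi-invariant. This bi-invariance is precisely the unimodularity of $\SL(2,\C)$.

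Combining the two steps finishes the argument: each element of the $\SO(4,\C)$-action on $S$ is a composition of a left and a right translation of $\SL(2,\C)$, and $\Omega$ is invariant under both, hence invariant under all of $\SO(4,\C)$. The only point that is not bookkeeping is the bi-invariance of $\Omega$, and even that is immediate once one knows $\SL(2,\C)$ carries no nontrivial characters, so I anticipate no serious obstacle. As a cross-check — and an alternative avoiding Lie theory — one can instead take $\Omega$ to be the Poincar\'e residue along $S=\{q=1\}$ of the meromorphic $4$-form $(q-1)^{-1}\,dx\wedge dy\wedge dz\wedge dw$ on $\C^4$: any element of $\SO(4,\C)\subset\SL(4,\C)$ preserves both $q$ and $dx\wedge dy\wedge dz\wedge dw$, hence preserves this meromorphic form and therefore its residue $\Omega$.
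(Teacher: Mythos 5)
Your argument is correct and follows the same two-pronged route the paper itself takes: first, the exceptional isomorphism $\SO(4,\C)\cong\bigl(\SL(2,\C)\times\SL(2,\C)\bigr)/\pm1$ identifies the action with left/right translations on $\SL(2,\C)\cong S$ and reduces the claim to unimodularity of $\SL(2,\C)$; second, an independent direct verification using $\SO(4,\C)$-invariance of the Euclidean holomorphic volume form and the defining equation of $S$ (the paper contracts $\Omega_0$ with the radial Euler vector field, you take the Poincar\'e residue of $\Omega_0/(q-1)$ --- two presentations of the same form up to a constant, since $dq(e)=2q$). The only thing you add is the short justification of unimodularity via perfectness of $\SL(2,\C)$, which the paper asserts without proof.
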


Alternatively, the $\SO(4,\C)$-invariant volume form can be seen directly, without reference to $\SL(2, \C)$ and unimodularity. $\SO(4, \C)$ preserves both the Euclidean holomorphic volume form $\Omega_{0}$ and the radial vector $e = x\del_x + y\del_y + z \del_z + w\del_w$, which is transverse to $S$. Hence the volume form $\Omega = \iota_e \Omega_0$ on $S$ is also invariant.

To produce compact quotients of $S$, let $M$ be an oriented compact hyperbolic 3-manifold with fundamental group $\Gamma \subset \PSL(2,\C)$. Since $M$ is spin (as all oriented 3-manifolds are), $\Gamma$ lifts to $\SL(2,\C)$ and the total space of the principal spin bundle of $M$ is $X = \SL(2,\C)/\Gamma$, where $\Gamma$ acts by right-multiplication. More precisely, each choice of lift of $\Gamma$ to $\SL(2,\C)$ gives a spin structure on $M$;  this correspondence is one-to-one: lifts and spin structures are both parametrised by $H^1(M, \Z_2)$. Whatever the choice of lift, the action of $\Gamma$ preserves the holomorphic volume form on $\SL(2,\C)$ and so $X$ is a compact complex threefold with trivial canonical bundle. 

Left-multiplication gives us additional freedom. Denote the chosen lift by $\alpha \colon \Gamma \to \SL(2,\C)$ and let $\rho \colon \Gamma \to \SL(2, \C)$ be some other homomorphism. From $\rho$ we obtain a new action of $\Gamma$ on $S$:
$$
\gamma \cdot A = \rho(\gamma)^{-1} A\, \alpha(\gamma).
$$
We will use this ``twisting'' to produce orbifold quotients of $S$, but it was originally used in a different context by Ghys \cite{ghys}. Ghys studies infinitesimal deformations of the complex manifold $X = \SL(2,\C)/\Gamma$, where $\Gamma$ acts by right-multiplication, i.e., where $\rho$ is the trivial homomorphism in the above picture. He shows that infinitesimal holomorphic deformations of $X$ are equivalent to infinitesimal deformations of the trivial homomorphism. It is possible to extend this picture, giving a description of part of the space of complex structures on $X$, something we address in forthcoming work \cite{fine-panov2}. 

\subsection{Hyperbolic geometry and the small resolution}
\label{hyperbolic geometry and the small resolution}

This section proves the analogue of Proposition \ref{action on S} for the small reso\-lution~$R$. 

\begin{proposition}\label{action on R}
$\SO(4,1)$ acts symplectomorphically on $R$, extending the action of $\SO(4)$. Moreover, $R$ admits an invariant compatible almost complex structure and invariant complex volume form.
\end{proposition}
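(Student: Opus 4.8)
The plan is to build the extended action geometrically, using the twistor description of $R$ as a bundle over $H^4$, rather than by brute-force extension of formulas on $\mathcal O(-1)\oplus\mathcal O(-1)$. First I would recall the identification of $R$ with the twistor space of $H^4$: since $\SO(4,1)\cong\Isom(H^4)$ acts on $H^4$, it acts on the associated bundle of orthogonal complex structures on the tangent spaces, and this twistor space is an $S^2$-bundle over $H^4$. The starting point is then to check that this $S^2$-bundle is indeed (equivariantly for the $\SO(4)$-stabiliser of a point) isomorphic to $R\to\C\P^1$, so that the $\SO(4)$-action given by \eqref{real exceptional isomorphism} on the exceptional $\C\P^1$ matches the fibrewise action of the point-stabiliser $\SO(4)\subset\SO(4,1)$ on the twistor fibre $S^2$. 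This pins down the extension uniquely on the level of smooth bundles; what remains is to see that it is symplectic and preserves a complex volume form.

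Next I would produce the symplectic form in a manifestly $\SO(4,1)$-invariant way. The twistor space of a Riemannian $4$-manifold carries a natural closed $2$-form built from the Levi-Civita connection: on the $S^2$-fibre it restricts to (a multiple of) the area form, and horizontally it is assembled from the curvature. For $H^4$ the curvature is parallel (constant sectional curvature), so this $2$-form is $\Isom(H^4)$-invariant by construction, and one checks it is symplectic (nondegeneracy follows from the fibre term plus the fact that the horizontal curvature term is nondegenerate for a nonzero-curvature metric). Then the key verification is that this invariant symplectic form agrees, under the identification with $R$, with $\omega_R=\pi^*\omega_{\C\P^1}+p^*\omega_{\C^4}$ — or at least lies in the same cohomology class and induces the same $\SO(4)$-equivariant germ near the zero section, so that a Moser-type argument upgrades the diffeomorphism to an $\SO(4,1)$-equivariant symplectomorphism. (One may instead simply \emph{define} the symplectic structure of interest on $R$ to be the twistor one; the paper's earlier discussion indicates $\omega_R$ and the twistor form are symplectomorphic, so either route is fine.) The compatible almost complex structure is the tautological one on the twistor space (vertical = complex structure of $\C\P^1$ on each fibre, horizontal = the complex structure parametrised by the point of the fibre); it is $\SO(4,1)$-invariant because the construction is natural and $H^4$ is homogeneous.

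For the invariant complex volume form, I would argue as follows. The canonical bundle of the twistor space of any self-dual (in particular, conformally flat) $4$-manifold is well understood; for $H^4$ one can exhibit a nowhere-vanishing section directly. One clean way: $\SO(4,1)$ acts transitively on $R$ with stabiliser a parabolic subgroup $P$, so $R\cong\SO(4,1)/P$ and a left-invariant complex volume form on $R$ is the same as a $P$-invariant complex volume element on $\mathfrak{so}(4,1)/\mathfrak p$; one then checks that $P$ acts on $\Lambda^{top}(\mathfrak{so}(4,1)/\mathfrak p)^{\ast}_{\C}$ trivially, i.e. the relevant modular character is trivial, which is the infinitesimal counterpart of the unimodularity used in Proposition~\ref{action on S}. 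Alternatively, one transplants the coordinate description: near the zero section $R$ looks like a neighbourhood in the conifold's resolution, which has the explicit Calabi--Yau holomorphic volume form $\iota_e\Omega_0$ pulled up through $p$, and $\SO(4)$ already preserves it; invariance under the full $\SO(4,1)$ then follows by the same transitivity/character computation, or by checking the Lie derivative along the extra infinitesimal generators vanishes.

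The main obstacle I anticipate is the compatibility/matching step: verifying that the intrinsic twistor symplectic form on $\SO(4,1)/P$ really is the one called $\omega_R$ on $R=\mathcal O(-1)\oplus\mathcal O(-1)$, equivariantly for $\SO(4)$, so that the $\SO(4)$-action genuinely extends to $\SO(4,1)$ by symplectomorphisms and not merely by diffeomorphisms. Concretely this is a normalisation computation on the link of the cone (an $S^3\times S^3/\cdots$ type space) comparing the curvature term of the twistor form against $\pi^*\omega_{\C\P^1}+p^*\omega_{\C^4}$; once the classes and the fibrewise behaviour match, an equivariant Moser argument closes the gap. Everything else — naturality of the almost complex structure, and triviality of the modular character giving the invariant complex volume form — is essentially formal once the homogeneous-space picture $R\cong\SO(4,1)/P$ is in place.
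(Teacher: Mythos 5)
Your main route via the twistor picture is sound in spirit and corresponds to the paper's ``twistorial description'' of $R$: the curvature of the Levi--Civita connection of $H^4$ in the vertical bundle produces an invariant symplectic form, and the matching with $\omega_R$ is exactly the normalisation verified in \cite{fine-panov}. The paper's actual proof of Proposition~\ref{action on R}, however, uses neither this nor your homogeneous-space route directly; it passes through the coadjoint orbit description $R\cong\mathcal O(\xi)\cong\SO(4,1)/\U(2)$ and reads everything off from the $\U(2)$-equivariant splitting $T_z\cong\Lambda^2(\C^2)^*\oplus\C^2$ of Lemma~\ref{U(2) decomposition}. That isomorphism gives the compatible $J$ immediately, and the invariant complex volume form because $\U(2)$ acts trivially on $\Lambda^3 T^*_z\cong\Lambda^2(\C^2)\otimes\Lambda^2(\C^2)^*$. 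So while both routes are viable, there are two concrete errors in yours that need fixing.

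First, the stabiliser of a point of $R$ under $\SO(4,1)$ is the \emph{compact} group $\U(2)$, not a parabolic subgroup $P$. $\SO(4,1)$ has real rank one; its (proper) parabolic has dimension $7$, so $\SO(4,1)/P\cong S^3$ is the sphere at infinity, whereas $\dim R=6=\dim\SO(4,1)-\dim\U(2)$. Your modular-character argument does make sense once you replace $P$ by $\U(2)$: you want $\U(2)$ to act trivially on $\Lambda^3 T^*_z$, which is exactly what the $\det\otimes\det^{-1}$ cancellation in the paper supplies. Second, your suggested ``alternative route'' of transplanting the holomorphic volume form $\iota_e\Omega_0$ is incorrect. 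That form is a $(3,0)$-form for the \emph{integrable} complex structure $J_{\mathrm{int}}$ on $R$ (the Atiyah--Hitchin--Singer / K\"ahler one), but the $\SO(4,1)$-invariant almost complex structure in Proposition~\ref{action on R} is the Eells--Salamon structure $J=-J_{\mathrm{int}}|_V+J_{\mathrm{int}}|_H$, which reverses sign on the vertical bundle $V$. The two canonical bundles differ by $V^{\otimes 2}$, which is nontrivial (its restriction to a twistor fibre is $\mathcal O(4)$), so $\iota_e\Omega_0$ cannot possibly be a section of the $J$-canonical bundle, compatible or otherwise; indeed $\SO(4,1)$ does not even preserve $J_{\mathrm{int}}$, so it cannot preserve $\iota_e\Omega_0$. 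Drop that alternative and keep only the character argument on $\U(2)$ (equivalently, the direct twistorial computation of the vertical weight), and the proof goes through.
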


Before proving this, we give three different descriptions of the hyperbolic geometry of $R$.

\subsubsection{Twistorial description}

The twistor space of $H^4$ carries a natural symplectic structure (a fact noticed independently by Reznikov \cite{reznikov} and Davidov--Mu\v{s}\-karov--Grantcharov \cite{davidov}) and this was shown to be symplectomorphic to the small resolution in \cite{fine-panov}. We very briefly recall the idea here. 

Choose coordinates $z_j$ on $\C^4$ in which the conifold is $\{\sum z_j^2=0\}$. The map $Q \to \R^4$ given by $z \mapsto \re z$ exhibits $Q\setminus 0$ as an $S^2$-bundle over $\R^4 \setminus 0$. The fibre over a point $x$ is all points of the form $x+iy$ where $y\in \langle x \rangle^\perp$ with $|y| = |x|$.

The \emph{twistor space} of $\R^4$ is the bundle of unit-length self-dual two forms. Given $x \in\R^4 \setminus 0$, interior contraction with $x$ gives an isomorphism $\Lambda^+ \cong \langle x \rangle^\perp$. In this way we can identify $Q \setminus 0$ with the twistor space of $\R^4\setminus 0$. This identification extends over the small resolution to give an $\SO(4)$-equivariant identification of $R$ with the twistor space of $\R^4$. (More precisely it extends over one of the two small resolutions, to obtain the other small resolution we should consider $\Lambda^-$.)

It was proved in \cite{fine-panov}  that, up to homotheties and rescaling, there is a unique $\SO(4)$-invariant symplectic form on the twistor space $R$ of $\R^4$ with infinite volume and whose sign changes under the antipodal map. From this twistorial view-point, it comes naturally from hyperbolic geometry. The Levi--Civita connection of the hyperbolic metric on $\R^4 \cong H^4$ induces a metric connection in the vertical tangent bundle $V \to R$. Its curvature $-2\pi i \omega$ determines the symplectic form. It follows from this that there is a symplectic action of the hyperbolic isometry group $\SO(4,1)$ extending that of $\SO(4)$.

In \cite{davidov2}, Davidov--Mu\v{s}karov--Grantcharov compute the total Chern form of the tangent bundle of the twistor space. Combining their calculation with the symplectomorphism of the twistor space and the small resolution in \cite{fine-panov} gives a proof of Proposition \ref{action on R}. We give a separate proof later, using an alternative description of $R$. 

\subsubsection{Quaternionic description}
\label{quaternionic description}

The next description, involving quaternions, is analogous to the standard picture of the twistor fibration $\C\P^3 \to S^4$ which we recall first. Begin by identifying $\C^4 \cong \H^2$. Each complex line in $\C^4$ determines a quaternionic line in $\H^2$, giving a map $t \colon \C\P^3 \to \H\P^1\cong S^4$. The fibre $t^{-1}(p)$ is the $\C\P^1$ of all complex lines in the quaternionic line~$p$. A choice of positive-definite Hermitian form on $\C^4$ gives a Fubini--Study metric on $\C\P^3$ and a round metric on $S^4$. The isometries $\SO(5)$ of $S^4$ are identified with those isometries of $\C\P^3$ which preserve the fibres of $t$, giving an injection $\SO(5) \to \PSU(4)$.

To describe the twistor fibration of $H^4$ in an analogous way we use an \emph{indefinite} Hermitian form: let $\C^{2,2}$ denote $\C^4$ together with the Hermitian form $h(w) = |w_1|^2 + |w_2|^2 - |w_3|^2 - |w_4|^2$ and consider the space
$$
N = \{ w\in \C^{2,2} : h(w)<0\}/\C^*
$$ 
of negative lines in $\C^{2,2}$; $N$ is an open set in $\C\P^3$. Transverse to a negative line in $\C^{2,2}$, $h$ is indefinite with signature $(2,1)$; accordingly $N$ inherits a \emph{pseudo}-Kähler metric of signature $(2,1)$, in the same way that a positive definite Hermitian form determines a Fubini--Study metric on $\C\P^3$. The pseudo-Kähler metric makes $N$ into a symplectic manifold. Alternatively, one can see the $N$ as the symplectic reduction of $\C^{2,2}$ by the diagonal circle action; the Hamiltonian for this action is just $h$ and the reduction at $h=-1$ is $N$. 

It is standard that $N$ is symplectomorphic to $R$. (One way to prove this is to use the fact that both admit Hamiltonian $T^3$-actions with equivalent moment polytopes.) To use this picture to relate $R$ to $H^4$, let $\H^{1,1}$ denote $\H^2$ together with the indefinite form $h'(p) = |p_1|^2 - |p_2|^2$. The map
$$
(w_1,w_2,w_3,w_4) \mapsto (w_1 + jw_2, w_3 + jw_4)
$$
identifies the Hermitian spaces $\C^{2,2} \cong \H^{1,1}$. There is a natural projection from $N$ to the space of negative quaternionic lines in $\H^{1,1}$, i.e., to the quaternionic-hyperbolic space:
$$
H^1_\H = \{ p \in \H^{1,1} : h'(p) <0\} /\H^*.
$$
In general, the space of negative lines in $\H^{n,1}$ is the quaternionic analogue of hyperbolic or complex-hyperbolic $n$-space. When $n=1$, however, this is isometric to $H^4$, four-dimensional real-hyperbolic space. (In the lowest dimension, the symmetric spaces associated to complex or quaternionic geometry coincide with their equidimensional real analogues.) 

So, analogous to $t \colon \C\P^3 \to S^4$, we have a projection $t \colon R \to H^4$; the fibre $t^{-1}(p)$ is the $\C\P^1$ of all complex lines in the quaternionic line~$p$. The pseudo-Kähler isometries of $R$ are $\PSU(2,2)$ whilst the isometries $\SO(4,1)$ of $H^4$ can be identified with those isometries of $R$ which preserve the fibres of $t$, giving an injection $\SO(4,1) \to \PSU(2,2)$. In this way we see again an action of $\SO(4,1)$ on $R$ by symplectomorphisms. 

\subsubsection{A coadjoint description}
\label{coadjoint description}

Let $G$ be a Lie group and $\xi \in \g^*$; denote the orbit of $\xi$ under the coadjoint action by $\mathcal O(\xi)$. It is a standard fact that there is a $G$-invariant symplectic structure on $\mathcal O(\xi)$. We will show how the small resolution fits into this general theory as a certain coadjoint orbit of $\SO(4,1)$. 

The Lie algebra $\so(4,1)$ is  5$\times$5 matrices of the form
\begin{equation}
\label{matrix}
\left(
\begin{array}{cc}
0 & u^t\\
u & A
\end{array}
\right),
\end{equation}
where $u$ is a column vector in $\R^4$ and $A \in \so(4)$. Those elements with $u=0$ generate $\SO(4)\subset \SO(4,1)$. 

The Killing form is non-degenerate on $\so(4,1)$ and so gives an equivariant isomorphism $\so(4,1) \cong \so(4,1)^*$. We consider the orbit of
$$
\xi=\left(
\begin{array}{cc}
0 & 0\\
0 & J_0
\end{array}
\right)
$$
where $J_0\in \so(4)$ is a choice of almost complex structure on $\R^4$ (i.e., $J_0^2=-1$). The subalgebra $\mathfrak h$ of matrices commuting with $\xi$ is those with $u=0$ and $[A,J_0]=0$, i.e., $\mathfrak h = \u(2) \subset \so(4) \subset\so(4,1)$. It follows that the stabiliser of $\xi$ is $\U(2)$ and so $\mathcal O(\xi) \cong \SO(4,1)/\U(2)$.

\begin{lemma}\label{U(2) decomposition}
There is an isomorphism of $\U(2)$-representation spaces:
$$
\so(4,1) \cong \u(2)\oplus \Lambda^2(\C^2)^* \oplus \C^2 .
$$
\end{lemma}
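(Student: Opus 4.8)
The plan is to decompose $\so(4,1)$ as a $\U(2)$-module by working directly with the matrix description \eqref{matrix}. First I would observe that as a vector space $\so(4,1) = \so(4) \oplus \R^4$, where $\R^4$ is the space of column vectors $u$ and $\so(4)$ sits inside as the block $A$; moreover this splitting is manifestly $\SO(4)$-equivariant, hence $\U(2)$-equivariant, since the $\U(2)$-action (by conjugation, since $\U(2) \subset \SO(4)$ stabilises $\xi$ and acts on $\so(4,1)$ by restriction of the adjoint action) respects the block structure. So it suffices to identify the two summands as $\U(2)$-representations.

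For the first summand, the adjoint action of $\U(2) \subset \SO(4)$ on $\so(4)$: using the exceptional isomorphism $\so(4) \cong \su(2) \oplus \su(2)$, together with the fact that $\U(2) \cong (\SU(2) \times \U(1))/\pm 1$ maps to $\SO(4)$ hitting one $\SU(2)$ factor together with the circle generated by $J_0$ in the other, one finds that $\so(4)$ decomposes under $\U(2)$ as $\u(2) \oplus (\text{a two-real-dimensional piece})$. More concretely: $\u(2) = \{A \in \so(4) : [A,J_0]=0\} = \mathfrak{h}$ is exactly the centraliser, which is a subrepresentation (the adjoint representation of $\U(2)$ on its own Lie algebra), and its complement in $\so(4)$ is the space of $A$ with $\{A,J_0\}=0$ (anticommuting), i.e.\ the $\C$-antilinear skew-symmetric endomorphisms of $(\R^4,J_0) = \C^2$. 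That space is naturally $\Lambda^2_\C(\C^2)^* $ once one identifies it with (the real points of) the complex-bilinear skew forms; here one uses that a $\C$-antilinear skew-adjoint map $\C^2 \to \C^2$ is the same data as a complex skew-symmetric bilinear form via the Hermitian metric, and $\dim_\C \Lambda^2(\C^2)^* = 1$, matching the two real dimensions. For the second summand, $\R^4$ with the standard $\SO(4)$-action restricted to $\U(2)$ is simply $\C^2$ with its standard $\U(2)$-action — this is essentially the definition of how $\U(2) \hookrightarrow \SO(4)$ acts on $\R^4 = \C^2$.

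Concretely the steps are: (1) record the vector-space splitting $\so(4,1) = \so(4) \oplus \R^4$ from \eqref{matrix} and check $\U(2)$-equivariance; (2) split $\so(4) = \mathfrak{h} \oplus \mathfrak{h}^\perp$ into commuting and anticommuting parts relative to $J_0$, noting $\mathfrak{h} = \u(2)$ by the computation already made in the text just before the lemma; (3) identify $\mathfrak{h}^\perp$, the $\C$-antilinear skew-symmetric maps, with $\Lambda^2(\C^2)^*$ via the Hermitian structure; (4) identify the column-vector summand $\R^4$ with the standard representation $\C^2$. Assembling these gives $\so(4,1) \cong \u(2) \oplus \Lambda^2(\C^2)^* \oplus \C^2$ as $\U(2)$-representations.

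The main obstacle is step (3): making the identification $\mathfrak{h}^\perp \cong \Lambda^2(\C^2)^*$ canonical and $\U(2)$-equivariant rather than merely a dimension count. The point to get right is that an anticommuting-with-$J_0$ element $A \in \so(4)$ is $\C$-antilinear, so $\langle A\,\cdot\,,\,\cdot\,\rangle$ is not a Hermitian-type pairing but a genuinely complex-bilinear one; skew-symmetry of $A$ translates into skew-symmetry of this bilinear form, landing it in $\Lambda^2(\C^2)^*$, and $\U(2)$-equivariance is then automatic from equivariance of the Hermitian metric. Once this is set up cleanly the rest is bookkeeping. (One could alternatively deduce the whole decomposition from the isomorphism $\so(4,1)\cong\so(4)\oplus\R^{4}$ together with branching under $\SU(2)\times\SU(2)\supset\U(2)$, but the direct approach above keeps everything tied to the matrices in \eqref{matrix}.)
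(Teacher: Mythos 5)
Your proof is correct and follows essentially the same approach as the paper: both split $\so(4,1) = \so(4) \oplus \R^4$ via the block form \eqref{matrix} and then decompose $\so(4)$ relative to $J_0$, identifying $\R^4 \cong \C^2$ as the standard $\U(2)$-module. The only cosmetic difference is that the paper realises the decomposition of $\so(4)$ by passing to $\Lambda^2(\R^4)^*$ and using the type decomposition $\Lambda^2_\R = \Lambda^{1,1}_\R \oplus \Lambda^{2,0}$ (with $\Lambda^{1,1}_\R \cong \u(2)$ via the Hermitian form), whereas you split $\so(4)$ into endomorphisms commuting, respectively anticommuting, with $J_0$; these are the same decomposition under the metric isomorphism $\so(4) \cong \Lambda^2(\R^4)^*$, since $\C$-linear skew maps correspond to $(1,1)$-forms and $\C$-antilinear skew maps to $(2,0)+(0,2)$-forms. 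Your flagged ``main obstacle'' in step (3) is handled cleanly in either language and is not a real gap.
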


\begin{proof}
There is a $\U(2)$-equivariant isomorphism $\so(4) \cong \u(2) \oplus \Lambda^2(\C^2)^*$. To see this, write $\so(4) \cong \Lambda^2(\R^4)^*$. Given a choice of almost complex structure on $\R^4$, any real 2-form $a$ can be written uniquely as $a= \alpha+\beta+\bar\beta$ where $\alpha \in \Lambda^{1,1}_\R$ is a real $(1,1)$-form and $\beta \in \Lambda^{2,0}$. Identifying $a$ with $(\alpha, \beta)$ gives a $\U(2)$-equivariant decomposition $\Lambda^2_\R \cong \Lambda^{1,1}_\R\oplus \Lambda^{2,0}$. But, via the Hermitian form, $\Lambda^{1,1}_\R$ is identified with skew-Hermitian matrices $\u(2)$ and this gives the claimed isomorphism.

There is also an $\SO(4)$-equivariant isomorphism $\so(4,1) \cong \so(4) \oplus \R^4$. In the form (\ref{matrix}), the $\so(4)$ summand is given by $u=0$ whilst the $\R^4$ summand by $A=0$. Combining these two isomorphisms completes the proof.
\end{proof}

\begin{lemma}\label{uniqueness of structure}
Up to scale, there is a unique $\SO(4,1)$-invariant symplectic form on $\SO(4,1)/\U(2)$.
\end{lemma}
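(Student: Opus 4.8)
The plan is to reduce the statement to a representation-theoretic computation at the tangent space of the basepoint. Write $H=\U(2)$ and identify $T_{[\xi]}(\SO(4,1)/H) \cong \so(4,1)/\h$. An $\SO(4,1)$-invariant $2$-form on the homogeneous space is the same thing as an $H$-invariant element of $\Lambda^2(\so(4,1)/\h)^*$, so the first step is to compute the space of such invariants and show it is one-dimensional. By Lemma \ref{U(2) decomposition} we have an $H$-equivariant splitting $\so(4,1)/\h \cong \Lambda^2(\C^2)^* \oplus \C^2$; here $\Lambda^2(\C^2)^*$ is a $1$-dimensional complex representation (the inverse-determinant character, real dimension $2$) and $\C^2$ is the standard representation (real dimension $4$). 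I would then decompose $\Lambda^2$ of the real $6$-dimensional space and count trivial summands: schematically $\Lambda^2(\so(4,1)/\h) \cong \Lambda^2(\Lambda^2\C^2)^* \oplus \big(\Lambda^2(\C^2)^* \otimes \C^2\big) \oplus \Lambda^2(\C^2)$, where the first piece is the trivial representation (it is $\Lambda^2$ of a real $2$-plane), the middle piece contains no trivial summand since tensoring the standard $\C^2$ by a character stays irreducible and non-trivial, and the last piece $\Lambda^2_\R(\C^2) \cong \u(2)^*$ as a real representation decomposes as trivial $\oplus$ $\esl(2)^*$-type, contributing exactly one more trivial summand. So the space of invariant $2$-forms is at most $2$-dimensional a priori, and I must pin it down to dimension one.

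The second step is to cut the invariant $2$-forms down to the \emph{closed} ones and show closedness forces the two-dimensional space of invariant forms to become one-dimensional — or, more cleanly, to observe that on a coadjoint orbit the Kirillov--Kostant--Souriau form is automatically closed and invariant, giving existence, and then argue uniqueness among \emph{symplectic} (in particular non-degenerate, closed) forms. The cleanest route for uniqueness: an invariant $2$-form is determined by its value $\sigma \in \Lambda^2(\so(4,1)/\h)^*$, and the two candidate trivial summands live on the two irreducible pieces $\Lambda^2(\C^2)^*$ and $\C^2$ of $\so(4,1)/\h$ separately (a form pairing the two pieces would have to be an $H$-map $\Lambda^2(\C^2)^* \to (\C^2)^*$, impossible by irreducibility/weight count). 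So any invariant $2$-form is a block sum $c_1\,\sigma_1 \oplus c_2\,\sigma_2$ with $\sigma_1$ the area form on the $\C^2$-fibre direction and $\sigma_2$ the area form on the $\Lambda^2(\C^2)^*$-direction. Then I impose $d\sigma=0$ using the Chevalley--Eilenberg differential, i.e. $\sigma([X,Y],Z) + \text{cyclic} = 0$ for $X,Y,Z \in \so(4,1)/\h$; the bracket of two fibre-type elements of $\C^2$ has a non-trivial component in the $\Lambda^2(\C^2)^*$-direction (this is exactly the curvature term that appeared in the twistorial description), and working this single bracket relation out forces $c_2$ to be a fixed multiple of $c_1$. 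That leaves a one-parameter family, proving the lemma.

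The main obstacle, and the step I would spend the most care on, is the closedness computation: I need the precise structure constants of $\so(4,1)$ written in the $\u(2) \oplus \Lambda^2(\C^2)^* \oplus \C^2$ basis, in particular the component of $[\C^2, \C^2]$ landing in $\Lambda^2(\C^2)^*$ versus in $\u(2)$, and I must be sure that the $\u(2)$-component (which is killed in the quotient $\so(4,1)/\h$ but still contributes to $d\sigma$ via the formula $d\sigma(X,Y,Z) = -\sum \sigma([X,Y]_{\so(4,1)/\h}, Z)$ only through the quotient part — so actually only the $\Lambda^2(\C^2)^*$-component matters). Concretely, in the matrix form (\ref{matrix}) the bracket of two elements with $A=0$ and column vectors $u,v \in \R^4$ is $\begin{pmatrix} 0 & 0 \\ 0 & uv^t - vu^t\end{pmatrix} \in \so(4)$, and I need its projection to $\Lambda^{2,0} \cong \Lambda^2(\C^2)^*$; this is non-zero for generic $u,v$, which is what makes the relation $c_2 \propto c_1$ non-vacuous. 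Once that bracket term is confirmed non-degenerate as a map $\Lambda^2 \C^2 \to \Lambda^2(\C^2)^*$, uniqueness up to scale is immediate. An alternative, slicker finish avoiding structure-constant bookkeeping: invoke that $\SO(4,1)/\U(2)$ is, by the previous subsections, diffeomorphic to the small resolution $R$, whose second Betti number is $1$ (it retracts onto the exceptional $\C\P^1$); since a closed invariant $2$-form has a well-defined de Rham class and an invariant exact $2$-form on a homogeneous space is zero, the map from invariant closed $2$-forms to $H^2_{\mathrm{dR}} \cong \R$ is injective, giving uniqueness up to scale at one stroke — this is probably the proof I would actually present, with the Lie-algebra computation relegated to a remark.
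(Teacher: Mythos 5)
Your Step 1 — reducing to $\U(2)$-invariant $2$-forms on $\so(4,1)/\u(2) \cong \Lambda^2(\C^2)^* \oplus \C^2$ and counting trivial summands in $\Lambda^2$ of this space to get a $2$-dimensional space of invariant $2$-forms, spanned by the two ``block'' forms — is exactly what the paper does, and your Schur-lemma bookkeeping is the right way to make the paper's terse appeal to Lemma~\ref{U(2) decomposition} precise. (One small terminology slip: $\C^2$ comes from $\so(4,1)/\so(4) \cong \R^4$ and is the \emph{horizontal}/base direction, while $\Lambda^2(\C^2)^*$ comes from $\so(4)/\u(2)$ and is the fibre direction of the twistor projection; you have these reversed in a couple of places, though your actual bracket computation $[\R^4,\R^4]\subset\so(4)$ is the relevant one.) Your Approach~A, imposing closedness via the Chevalley--Eilenberg differential and isolating the component of $[\C^2,\C^2]$ in $\Lambda^2(\C^2)^*$, is in substance the paper's argument: the paper writes ``$t$ being fixed by the requirement that $\diff a = t\diff b$'' and leaves the nonvanishing of that bracket term implicit, whereas you correctly identify it as the point that needs checking.

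The gap is in Approach~B, which you say is the one you would actually present. The claim that an invariant exact $2$-form on a homogeneous space must vanish is not true in general when $G$ is non-compact (think of a translation-invariant exact $2$-form on a non-compact nilmanifold model), and you give no argument for it here. What you can extract cheaply from $b_2(R)=1$ and restriction to a twistor fibre is only that the coefficient of the \emph{fibre} form in a closed exact invariant $2$-form vanishes; to rule out the horizontal form $a$ being simultaneously closed and exact you are pushed back to showing $da \neq 0$, i.e.\ back to Approach~A. So the ``slicker finish'' does not actually sidestep the structure-constant check — it quietly presupposes it. If you want to present Approach~B, you should either (i) prove directly that there are no invariant primitives to worry about in this specific case (e.g.\ by a direct computation that $da\neq0$, at which point you might as well just do Approach~A), or (ii) drop the cohomological shortcut and carry out the bracket computation you already set up.
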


\begin{proof}
The existence follows from coadjoint orbit description. For uniqueness, we begin by describing all invariant non-degenerate 2-forms. This amounts to describing all non-degenerate 2-forms at a point which are invariant under the stabiliser $\U(2)$. From Lemma \ref{U(2) decomposition} the tangent space at a point is isomorphic as a $\U(2)$-representation space to $\Lambda^2(\C^2)^* \oplus \C^2$. Up to scale, there is a unique invariant 2-form on each summand, giving two $\SO(4,1)$-invariant 2-forms $a, b$ on $\SO(4,1)/\U(2)$. It follows that, up to scale, all non-degenerate invariant 2-forms have the form $a+tb$ for $t \neq 0$. At most one of these can be closed, $t$ being fixed by the requirement that $\diff a = t\diff b$.
\end{proof}

\begin{corollary}
$\mathcal O(\xi)$ is symplectomorphic to $R$.
\end{corollary}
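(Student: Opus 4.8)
The plan is to combine Lemma~\ref{uniqueness of structure} with Proposition~\ref{action on R} and the twistorial (or quaternionic) picture of $R$ described above. The key observation is that $R$ carries an $\SO(4,1)$-invariant symplectic form --- this is exactly what the twistorial description gives, since $\omega_R$ arises as the curvature of the Levi--Civita connection on the vertical bundle over $H^4$ and hence is automatically equivariant under the hyperbolic isometry group. So $R$ is a symplectic manifold with a symplectic $\SO(4,1)$-action, and one just needs to identify $R$ with $\SO(4,1)/\U(2)$ as a homogeneous $\SO(4,1)$-space.

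First I would check that the $\SO(4,1)$-action on $R$ is transitive with stabiliser $\U(2)$. Transitivity already holds at the level of $\SO(4)$ on the fibre-$S^2$'s and $\SO(4,1)$ on the base $H^4$: given the twistor description $t\colon R\to H^4$, the isometry group $\SO(4,1)$ acts transitively on $H^4$, and the isotropy $\SO(4)$ of a point $x\in H^4$ acts on the twistor fibre $t^{-1}(x)\cong S^2 = $ (unit self-dual $2$-forms on $T_xH^4$) via the double cover $\SO(4)\to\SO(3)$, which is transitive on $S^2$. Hence $\SO(4,1)$ acts transitively on $R$. The stabiliser of a point $r\in t^{-1}(x)$, i.e.\ of a choice of compatible complex structure $J_0$ on $T_xH^4$, is exactly the subgroup of $\SO(4)$ commuting with $J_0$, which is $\U(2)$; this matches the coadjoint computation $\mathcal O(\xi)\cong\SO(4,1)/\U(2)$ given just before the corollary. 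Therefore $R\cong\SO(4,1)/\U(2)$ as smooth $\SO(4,1)$-manifolds.

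Next, transport $\omega_R$ through this diffeomorphism: it becomes an $\SO(4,1)$-invariant symplectic form on $\SO(4,1)/\U(2)$. By Lemma~\ref{uniqueness of structure} this form is unique up to scale, and the coadjoint symplectic form on $\mathcal O(\xi)$ is another such form, so the two agree after rescaling. A rescaling of a symplectic form is realised by a symplectomorphism only after adjusting by a homothety, but since we only claim ``symplectomorphic'' up to the usual normalisation (and $R$'s symplectic form was itself only pinned down up to homothety in \cite{fine-panov}), this is harmless; alternatively one fixes the normalisation of $\omega_R$ so that the scales match exactly. This gives the desired symplectomorphism $\mathcal O(\xi)\cong R$.

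The main obstacle is the verification that the diffeomorphism $R\cong\SO(4,1)/\U(2)$ is genuinely $\SO(4,1)$-equivariant and not merely a diffeomorphism of the underlying manifolds --- one must be careful that the $\SO(4,1)$-action on $R$ coming from the twistor/hyperbolic construction matches the left-translation action on $\SO(4,1)/\U(2)$, and in particular that the isotropy representation of $\U(2)$ on $T_rR$ is the one computed in Lemma~\ref{U(2) decomposition}, namely $\Lambda^2(\C^2)^*\oplus\C^2$, so that Lemma~\ref{uniqueness of structure} applies. Once equivariance and the isotropy representation are in hand, uniqueness does all the remaining work and the proof is essentially immediate.
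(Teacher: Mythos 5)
Your proposal follows the same route as the paper's one-line proof: establish that $\SO(4,1)$ acts transitively on $R$ with stabiliser $\U(2)$ (via the twistor picture) and then invoke Lemma~\ref{uniqueness of structure} to match the invariant symplectic forms up to scale. One small slip worth correcting: the map $\SO(4)\to\SO(3)$ giving the action on unit self-dual forms is a surjection with kernel one of the $\SU(2)$ factors, not a ``double cover''; this does not affect the transitivity argument.
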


\begin{proof}
This follows from the previous lemma and the transitive action of $\SO(4,1)$ on $R$ which can be seen in either the twistorial or quaternionic pictures.
\end{proof}

We make a small digression to point out a simple consequence of the coadjoint orbit description of $R$ which is, perhaps, less apparent from its Kähler description as the total space of $\mathcal O(-1)\oplus \mathcal O(-1)$. We omit the details since this description of $R$ is not relevant to what follows.

\begin{proposition}\label{magnetic monopole}
$R$ is symplectomorphic to $\mathcal O(-2)\times \R^2$ (where $\mathcal O(-2) \to \C\P^1$ has symplectic form given by adding the pull-backs of the standard forms via the projection to $\C\P^1$ and the resolution $\mathcal O(-2) \to \C^2/\Z_2$).
\end{proposition}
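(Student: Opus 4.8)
The plan is to recognise both sides as the total space of one and the same symplectic $\mathbb R^4$-bundle over a $2$-sphere, equipped with forms that agree near the zero section, and then to globalise. Recall from the corollary above that $R\cong\mathcal O(\xi)\cong\SO(4,1)/\U(2)$, but for this argument I would work directly from the two Kähler models. In $R=\mathcal O(-1)\oplus\mathcal O(-1)$ let $\Sigma$ be the zero section: it is an embedded symplectic $2$-sphere, $\omega_R|_\Sigma=\omega_{\C\P^1}$, and its symplectic normal bundle is the Hermitian bundle $\mathcal O(-1)\oplus\mathcal O(-1)$. In $\mathcal O(-2)\times\R^2$ let $\Sigma'$ be the zero section of $\mathcal O(-2)$ crossed with the origin of $\R^2$; again this is an embedded symplectic $2$-sphere, on which the form restricts to $\omega_{\C\P^1}$, with symplectic normal bundle $\mathcal O(-2)\oplus\underline{\C}$. (Here $\mathcal O(-2)$ with the stated form is a twisted, ``magnetic'', cotangent bundle of $S^2$ and $\R^2=T^*\R$, which is the origin of the name.)

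The first key point is that these symplectic normal bundles agree. A rank-$4$ symplectic vector bundle over $S^2$ is classified by an element of $\pi_1(\Sp(4,\R))\cong\Z$, and under $\pi_1(\U(2))\to\pi_1(\Sp(4,\R))$ a Hermitian bundle maps to its first Chern number; since $c_1(\mathcal O(-1)\oplus\mathcal O(-1))=-2=c_1(\mathcal O(-2)\oplus\underline{\C})$, the two are isomorphic as symplectic bundles. After rescaling one of the Fubini--Study forms so that $\Sigma$ and $\Sigma'$ have equal area, the symplectic neighbourhood theorem supplies a symplectomorphism between a tubular neighbourhood of $\Sigma$ in $R$ and one of $\Sigma'$ in $\mathcal O(-2)\times\R^2$.

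The main obstacle is to upgrade this to a global symplectomorphism: both manifolds are non-compact and $\omega_R$ is not exact away from $\Sigma$, so Moser's theorem does not apply off the shelf. I would argue as follows. Both $R$ and $\mathcal O(-2)\times\R^2$ are total spaces of the (now identified) rank-$4$ symplectic vector bundle over $S^2$, carrying closed $2$-forms of the shape ``pullback of a form on $S^2$ plus a closed fibrewise-symplectic form'' that is controlled at infinity (coming from a flat form on $\C^4$, respectively on $\C^2/\Z_2\times\C$, via the resolution maps). These two forms represent the same class in $H^2$ of the total space (which is $H^2(S^2)\cong\Z$) and agree near the zero section, so a Moser argument adapted to forms with this kind of behaviour at infinity — the point being that the interpolating vector field stays complete — extends the local symplectomorphism to a global one. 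Alternatively, and more conceptually, the corollary together with the quaternionic description realises $R\cong N$ inside $\C\P^3$ as an open coadjoint orbit of $\PSU(2,2)\cong\SO(4,2)$; this orbit is precisely the Kepler manifold of a charged particle in $\R^3\setminus 0$ moving in the field of a magnetic monopole, classically identified with $T^*(\R^3\setminus 0)=T^*S^2\times T^*\R=\mathcal O(-2)\times\R^2$ with the magnetic symplectic form, which yields the statement directly.
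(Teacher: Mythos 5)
Your local analysis is fine: both zero sections $\Sigma\subset R$ and $\Sigma'\subset\mathcal O(-2)\times\R^2$ are symplectic two-spheres of area~$1$, and their symplectic normal bundles (rank-$2$ Hermitian bundles over $\C\P^1$ of first Chern number $-2$) are isomorphic, so the symplectic neighbourhood theorem identifies tubular neighbourhoods. The gap is the globalisation, and it is a real one. For a Moser argument on an open manifold you need the interpolating vector field to be complete, which in practice means first producing a diffeomorphism of the two ends under which the forms agree (or converge rapidly) at infinity, and then running Moser on the compact difference region. You assert that both forms are ``controlled at infinity'' because each comes from a flat model, but the two asymptotic models are genuinely different: the end of $R$ is, symplectically, the conical end of the conifold (link $S^2\times S^3$), whereas the end of $\mathcal O(-2)\times\R^2$ is modelled on $(\C^2/\Z_2)\times\C$, a cone over a singular link. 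That the two total spaces are diffeomorphic (both are $S^2\times\R^4$) and that the forms agree near the zero section is far short of what is needed; on a non-compact manifold these data do not determine the symplectomorphism type. As written, the ``Moser adapted to the behaviour at infinity'' is the entire content of the proposition, not a routine step.

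The paper's own argument avoids this problem by producing a single global normal form rather than patching local ones. Since $R$ is a coadjoint orbit, the $\SO(4,1)$-action is Hamiltonian; fixing a point $p$ at infinity on $H^4$, the unipotent $\R^3\subset\SO(4,1)$ translating the horospheres centred at $p$ acts \emph{freely} and Hamiltonianly on $R$, and the twistor fibres over a geodesic through $p$ give a global section $S^2\times\R$ of this action. A free Hamiltonian $\R^3$-action on a six-manifold with a section exhibits the manifold as a (possibly twisted) cotangent bundle of the section; here the section carries the closed two-form $\pi^*\omega_{\C\P^1}$, of area~$1$ on the $S^2$-factor and zero on the $\R$-factor, so $R$ is the magnetic cotangent bundle $T^*S^2_{\mathrm{mag}}\times T^*\R=\mathcal O(-2)\times\R^2$. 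Your second suggestion --- recognising $R$ as the magnetic/Kepler phase space --- is the same idea in spirit, but you state the identification rather than derive it, and you pass through the $\PSU(2,2)$-picture rather than isolating the free $\R^3\subset\SO(4,1)$ with section, which is the structure that actually produces the cotangent-bundle coordinates. If you want to make either of your routes rigorous, proving the existence of that free Hamiltonian $\R^3$-action with a section is the step to supply.
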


\begin{proof}[Sketch proof]
As $R$  is a coadjoint orbit of $\SO(4,1)$, the action of $\SO(4,1)$ is Hamiltonian. It follows that $R$ admits a free Hamiltonian $\R^3$-action given as follows: fix a point $p$ at infinity on $H^4$ and consider the subgroup $\R^3 \subset \SO(4,1)$ which acts by linear translations on the horospheres centred at $p$. The action on $R$ admits a section given by considering the restriction of $R \to H^4$ to a geodesic through $p$; this shows that the orbit space is $S^2\times \R$. If the section were Lagrangian, we would be dealing with the cotangent bundle of $S^2 \times \R$, but the $S^2$-factor has symplectic area 1 and so $R$ is symplectomorphic to $\mathcal O(-2) \times \R^2$.
\end{proof} 

\begin{corollary}
$R$ contains no Lagrangian 3-spheres.
\end{corollary}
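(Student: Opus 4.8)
The plan is to deduce the statement from the previous proposition, which identifies $R$ symplectomorphically with $\O(-2)\times\R^2$. A Lagrangian $3$-sphere is in particular a closed Lagrangian submanifold, so I would look for a topological or homological obstruction to the existence of \emph{any} closed Lagrangian $3$-submanifold in $\O(-2)\times\R^2$, and then sharpen if necessary. The key structural feature to exploit is the free Hamiltonian $\R^3$-action on $R$ (equivalently, the splitting off of the $\R^2$-factor and the $\R$-factor in the orbit space $S^2\times\R$): a manifold with a free $\R^k$-action has very restricted closed submanifolds, since the moment map components are functions with no critical points.

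Concretely, here is the chain of steps I would carry out. First, recall from the sketch proof of the previous proposition that $R$ carries a free Hamiltonian action of $\R^3$ whose moment map $\mu\colon R\to\R^3$ is a submersion with orbit space $S^2\times\R$ (the section coming from restricting $R\to H^4$ to a geodesic through the chosen point at infinity). Second, suppose $L\subset R$ were a closed Lagrangian $3$-submanifold. Each of the three components $\mu_i$ of $\mu$ is the Hamiltonian generating a circle-less (i.e.\ genuinely $\R$-valued, noncompact-orbit) flow, and the corresponding Hamiltonian vector field $X_i$ is nowhere zero on $R$. Since $L$ is Lagrangian, the restriction $\mu_i|_L$ has the property that its differential annihilates $TL\cap(TL)^{\omega}=TL$ in the directions paired with $X_i$; more usefully, I would argue that the function $\mu_i|_L$ attains a maximum on the compact manifold $L$, and at such a point the Hamiltonian vector field $X_i$ must be tangent to $L$ (because $d\mu_i$ vanishes there along $L$, and $X_i$ spans the symplectic orthogonal to $\ker d\mu_i$). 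Running this for the two ``$\R^2$-translation'' directions and the ``$\R$'' direction that together account for the $\R^2$ factor and the noncompact $\R$ in $S^2\times\R$, I would show that $L$ must be disjoint from... — rather, that the $\R^2$-valued part of $\mu$ restricted to $L$ is a submersion onto a compact subset of $\R^2$ with nonempty interior, which is impossible. Third, and most cleanly: the projection $R\cong\O(-2)\times\R^2\to\R^2$ restricted to $L$ is a map from a closed manifold to $\R^2$; composing with the symplectic form shows this projection is ``Lagrangian-flat'' in the $\R^2$ directions, hence constant on $L$ — so $L$ lies in a single fibre $\O(-2)\times\{pt\}\cong\O(-2)$. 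But $\O(-2)$ is a $4$-dimensional manifold that deformation retracts onto its zero section $\C\P^1\cong S^2$, so $H_3(\O(-2);\Z)=0$ and it contains no closed orientable $3$-submanifold at all. This contradicts $[L]$ being a fundamental class of $S^3$ sitting inside $\O(-2)$, and in fact rules out all closed Lagrangian $3$-folds, not just spheres.

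The step I expect to be the main obstacle is the second one: justifying rigorously that a closed Lagrangian $L$ must be contained in a single fibre of the projection $R\to\R^2$. The cheap version of this — ``$d\mu_i$ vanishes at an extremum and $X_i$ spans the $\omega$-orthogonal to $\ker d\mu_i$, so $X_i$ is tangent to $L$ there'' — only gives tangency at finitely many points, not global confinement to a fibre. The correct argument is that the $\R^2$-action is Hamiltonian with the $\R^2$-factor of $R\cong\O(-2)\times\R^2$ as (part of) the moment map, so that $\mu_{\R^2}\colon R\to\R^2$ pulls back the standard symplectic form on... no: rather, one uses that on a product $\O(-2)\times\R^2$ with the split symplectic form, the Hamiltonian flows of the two $\R^2$-coordinates are the translations in $\R^2$, which are free and proper; an invariant compact set then projects to a compact $\R^2$-invariant set, forcing the projection to be constant. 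Making this precise — i.e.\ identifying which copy of $\R^2$ inside the $\R^3$-action is the literal $\R^2$-factor of the product, and checking its orbits are closed — is where the real care is needed, but it is exactly the content already sketched in the proof of the preceding proposition, so it can be invoked rather than redone.
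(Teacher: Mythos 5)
Your proposal starts from the same Proposition (that $R\cong\mathcal O(-2)\times\R^2$), but then diverges sharply from the paper, which simply invokes a theorem of Welschinger (his Corollary~4.13): since $\mathcal O(-2)$ is convex at infinity, the product $\mathcal O(-2)\times\R^2$ contains no Lagrangian sphere. That is a genuinely deep holomorphic-curve result, not an elementary topological fact, and this should already be a warning sign: if a soft argument of the kind you propose worked, one would not need Welschinger.

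The gap is in the step you yourself flag as the main obstacle, and it is fatal: it is simply false that a closed Lagrangian $L\subset\mathcal O(-2)\times\R^2$ must be contained in a single fibre $\mathcal O(-2)\times\{\mathrm{pt}\}$. Your ``invariant compact set projects to an $\R^2$-invariant compact set'' reasoning presupposes that $L$ is invariant under the Hamiltonian flows of the $\R^2$-coordinates; but a Lagrangian is invariant under the flow of $H$ only if $H|_L$ is already constant, which is precisely what you are trying to prove. A concrete counterexample: take any Lagrangian $2$-torus $T\subset\mathcal O(-2)$ (for instance, a regular orbit of the toric $T^2$-action), and any embedded closed curve $\gamma\subset\R^2$. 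Then $T\times\gamma$ is a closed Lagrangian $3$-manifold in $\mathcal O(-2)\times\R^2$ whose projection to $\R^2$ is $\gamma$, not a point. In particular your concluding remark that the argument ``rules out all closed Lagrangian $3$-folds, not just spheres'' is exactly what shows something has gone wrong, since such Lagrangian tori manifestly exist. The distinction between tori and spheres is essential here, and it is captured only by the hard analytical input from Welschinger's work; your proposal uses no property of $S^3$ beyond $H_3(\mathcal O(-2))=0$, and so cannot possibly see this distinction.
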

\begin{proof}
Since $\mathcal O(-2)$ is convex at infinity, this follows from recent work of Welschinger \cite{welschinger} (see, for example, Corollary 4.13).
\end{proof}

\subsubsection{An $\SO(4,1)$-invariant complex volume form}

We now give the proof of Proposition \ref{action on R}, which says that the small resolution admits an $\SO(4,1)$-invariant compatible almost complex structure and invariant complex volume form.

\begin{proof}[Proof of Proposition \ref{action on R}] 
We use the coadjoint orbit description. Lemma \ref{U(2) decomposition} says that at each point $z$ there is a $\U(2)$-equivariant isomorphism of the tangent space $T_z\cong \Lambda^2(\C^2)^* \oplus \C^2$. Accordingly, there is a natural $\U(2)$-invariant almost complex structure on $T_z$ which is $\omega$-compatible and, hence, an $\SO(4,1)$-invariant compatible almost complex structure $J$ on $\mathcal O(\xi)$. 

For the complex volume form, note that $\U(2)$ acts trivially on $\Lambda^3 T^*_z \cong \Lambda^2(\C^2) \otimes \Lambda^2(\C^2)^*$ hence any non-zero element of $\Lambda^3 T_z^*$ can be extended in a unique way to an $\SO(4,1)$-invariant complex volume form.
\end{proof} 

We remark in passing that Lemma \ref{U(2) decomposition} and the almost complex structure used here also have a straightforward interpretation in the twistorial picture. Recall that $R \to H^4$ is a $\C\P^1$-bundle and that $\omega$ is non-degenerate on the fibres. Hence $TR = V \oplus H$ where $V$ is the vertical tangent bundle and $H$ its complement with respect to $\omega$. This corresponds to the splitting in Lemma \ref{U(2) decomposition}. The pseudo-Kähler metric is negative definite on $V$ and positive definite on $H$. Let
$$
J = -J_\mathrm{int}|_V + J_\mathrm{int}|_H
$$
where $J_\mathrm{int}$ is the integrable complex structure on $R \subset \C\P^3$. $J$ is an $\SO(4,1)$-invariant almost complex structure and coincides with that coming from the coadjoint orbit picture. It is an instance of the Eells--Salamon almost complex structure on the twistor space of a Riemannian four-manifold \cite{eells-salamon}. 
 
With this hyperbolic description in hand, we can use hyperbolic four-manifolds to produce symplectic quotients of the small resolution: a hyperbolic four-manifold carries a two-sphere bundle---its twistor space---whose total space is symplectic with trivial canonical bundle. Of course, any compact example will have infinite fundamental group. We will produce a simply connected example in \S\ref{symplectic example} by considering a certain hyperbolic orbifold.

\section{Complex examples}
\label{complex examples}

Compact complex manifolds will be built starting from hyperbolic orbifold metrics on $S^3$ with cone angle $2\pi/m$ along a knot $K \subset S^3$. When such a metric exists, $K$ is said to be $2\pi/m$-hyperbolic. Such knots are well-known to be plentiful. As is explained in \cite{boileau-porti}, it is a consequence of Thurston's orbifold Dehn surgery theorem that when $K$ is a hyperbolic knot---i.e., when $S^3\setminus K$ admits a complete finite-volume hyperbolic metric---$K$ is also $2\pi/m$-hyperbolic for all $m\geq3$ with one sole exception, namely when $K$ is the figure eight knot and $m=3$. There are also infinitely many $\pi$-hyperbolic knots. (We are grateful to Michel Boileau for advice on this matter.)

Let $H^3/\Gamma$ be a hyperbolic orbifold metric on $S^3$ with cone angle $2\pi/m$ along a knot $K$ where $\Gamma\subset \PSL(2,\C)$ is the orbifold fundamental group. It is standard that there a is smooth hyperbolic manifold $M$ which is an $m$-fold cyclic cover $M\to S^3$ branched along the knot $K$. (By Mayer--Vietoris, $H_1(S^3\setminus K) = \Z$; hence there is a homomorphism $\pi_1(S^3\setminus K) \to \Z$ which in turn induces a homomorphism $\psi \colon \Gamma \to \Z_m$; the kernel $\Gamma'$ of $\psi$ has no fixed points on $H^3$ and is the fundamental group of $M$.)

Since $M$ is a hyperbolic manifold, its spin-bundles are complex manifolds with trivial canonical bundle (as is described in \S\ref{S and H^3}). When the spin-structure $P \to M$ is well-chosen, the generator of the $\Z_m$-action lifts to $P$ where it generates a $\Z_{2m}$-action. To produce a simply-connected quotient of $P$,  we then ``twist'' this action around the knot; this induces fixed points in the fibres over $K$ and results in a singular quotient for which all the fibres are simply-connected, giving a simply-connected total space. Finally, we obtain a smooth manifold by taking a crepant resolution.

\subsection{The model singularity and resolution}\label{model singularity}

%\subsubsection{Twisting to produce the model singularity}

We begin by considering the model situation. Take a geodesic $\gamma$ in $H^3$ and let $\Z_m$ act on $H^3$ by fixing $\gamma$ pointwise and rotating perpendicular to $\gamma$ by $2\pi/m$. Let $\varphi =e^{\pi i/m}$. In appropriate coordinates, $\Z_m \subset \PSL(2,\C)$ is generated by the class $[U]$ of the diagonal matrix $U$ with entries $\varphi, \varphi^{-1}$. 

$\Z_m\in \PSL(2,\C)$ is covered by the copy of $\Z_{2m} \subset \SL(2,\C)$ generated by $U$. The action of $\Z_m$ on $H^3$ is covered by the action of $\Z_{2m}$ by right multiplication on $\SL(2,\C)$. However, this action on $\SL(2,\C)$ is free and so the quotient has non-trivial fundamental group. To produce a singular but simply-connected quotient, we ``twist'' to introduce fixed points. This additional twist is given by simultaneously multiplying on the \emph{left}.

Consider the action of $\Z_m$ on $\SL(2,\C)$ generated by conjugation $A \mapsto U^{-1}AU$. Explicitly, in coordinates, the generator~is
\begin{equation}\label{Z_m action}
\left(
\begin{array}{cc}
x & y\\
z & w
\end{array}
\right)
\mapsto
\left(
\begin{array}{cc}
x &\varphi^{-2} y \\
\varphi^2 z & w
\end{array}
\right).
\end{equation}
The points of $\SL(2,\C)$ fixed by $\Z_m$ are the $\C^*$ subgroup of diagonal matrices. On a tangent plane normal to the fixed points, the $\Z_m$-action is that of the $A_m$-singularity (i.e., $\C^2/\Z_m$ with action generated by $U^2 \in \SU(2)$).  Left multiplication on $\SL(2,\C)$ by the $\C^*$ subgroup of diagonal matrices is free and commutes with the action of $\Z_m$; hence it descends to a free $\C^*$-action on the quotient. The $\C^*$-action is transitive on the fixed locus and so gives an identification of the a neighbourhood of the orbifold points with the product of $\C^* \times V$, where $V$ is a neighbourhood of the singular point in the $A_m$-singularity. 

The $\Z_m$-action preserves the fibres of $\SL(2,\C) \to H^3$ and covers the $\Z_m$ action on $H^3$---the fibres are right cosets of $\SU(2)$ and $U\in \SU(2)$---hence there is a projection $\SL(2,\C)/\Z_m\to H^3/\Z_m$. The $\Z_m$-fixed points form a circle bundle over the geodesic $\gamma$. We have implicitly oriented $\gamma$ in our choice of coordinates (\ref{Z_m action}); the image of the fixed points in the frame bundle $\PSL(2,\C)$ are those frames whose first vector is positively tangent to $\gamma$.

Since the $A_m$-singularity admits a crepant resolution, it follows that $\SL(2,\C)/\Z_m$ does too. The exceptional divisor of the resolution $W\to \SL(2,\C)/\Z_m$ maps to the $\C^*$ of singular points with fibre a chain of $m-1$ rational curves over each point, as in the standard crepant resolution of the $A_m$ singularity. 

%\subsubsection{Singularities along closed geodesics}
%\label{model singularity closed loop}

In our model example, the $\Z_m$ fixed locus in $H^3$ is a geodesic $\gamma$. In our compact examples, the branch locus will be a closed geodesic loop. The geodesic $\gamma$ is ``closed up'' by the action of $\Z$ on $\SL(2,\C)$ generated by right-multiplication by the diagonal matrix with entries $a, a^{-1}$, where $a\in \C$ has $|a| > 1$. This action is free and commutes with the $\Z_m$ action described above, hence it induces an action of $\Z$ on $\SL(2,\C)/\Z_m$ and on the crepant resolution $W$. The quotient $W/\Z$ is a crepant resolution of $\SL(2,\C)/(\Z_m\oplus \Z)$, a complex orbifold whose singular locus is an elliptic curve. This is the singularity which will actually appear in our compact examples.

Finally note that the free $\C^*$-action on $\SL(2,\C)/\Z_m$ described above commutes with the action of $\Z$ and so descends to $\SL(2,\C)/(\Z_m\oplus \Z)$. It induces a free $\C^*$-action on the resolution, a feature which will be shared by our compact examples.

\subsection{Compact examples}

%\subsubsection{Complex orbifolds from hyperbolic orbifolds}

We now consider a hyperbolic orbifold metric on $S^3$ with cone angle $2\pi/m$ along a knot $K$. It is a standard fact that there a is smooth hyperbolic manifold $M$ which is an $m$-fold cyclic cover $M\to S^3$ branched along the knot $K$. $\Z_m$ acts by isometries on $M$, fixing the geodesic branch locus pointwise and rotating the normal bundle by $2\pi/m$. The action of $\Z_m$ on the universal cover $H^3$ of $M$ is precisely the situation considered in \S\ref{model singularity}: $\Z_m \subset \PSL(2,\C)$ is generated (in appropriate coordinates) by the class $[U]$ of a matrix $U$ which is diagonal with entries $\varphi$ and $\varphi^{-1}$, where $\varphi = e^{\pi i/m}$. 

In order to produce a complex orbifold we first try to lift the $\Z_m$-action to a spin bundle of $M$. Let $Q$ denote the frame bundle of $M$ and $P$ a choice of spin bundle. The generator of the $\Z_m$-action on $M$ induces a diffeomorphism $f$ of $Q$ and we aim to choose the spin structure so that $f$ lifts to $P$. For this we state the following standard result:

\begin{lemma}
Let $Y$ be a connected manifold, $f\colon Y \to Y$ a diffeomorphism and $Y_h \to Y$ the double cover corresponding to the element $h \in H^1(Y, \Z_2)$. Then $f$ lifts to $Y_h$ if and only if $f^*h = h$.
\end{lemma}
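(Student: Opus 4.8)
The plan is to deduce the lemma from elementary covering space theory together with the classification of double covers. Fix a basepoint $y_0\in Y$ and use the standard identification $H^1(Y,\Z_2)\cong\Hom(\pi_1(Y,y_0),\Z_2)$, writing $h$ also for the corresponding homomorphism. If $h=0$, then $Y_h=Y\sqcup Y$ is the trivial double cover, every diffeomorphism of $Y$ lifts, and $f^*h=h$ holds automatically; so I may assume $h\neq0$, in which case $Y_h\to Y$ is the connected regular double cover determined by the index-two normal subgroup $K_h:=\ker h\leq\pi_1(Y,y_0)$, and $K_h=\im\bigl(p_*\colon\pi_1(Y_h)\to\pi_1(Y,y_0)\bigr)$ for any choice of basepoint in the fibre over $y_0$.

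The heart of the argument is the covering lifting criterion applied to the map $f\circ p\colon Y_h\to Y$, which one wants to lift through $p\colon Y_h\to Y$. Choosing a path $\sigma$ from $y_0$ to $f(y_0)$ and the induced isomorphism $\sigma_\#\colon\pi_1(Y,f(y_0))\to\pi_1(Y,y_0)$, the criterion says that $f$ lifts if and only if $\sigma_\#f_*(K_h)\subseteq K_h$, hence (since both subgroups have index two) if and only if $\sigma_\#f_*(K_h)=K_h$. Now $h$ and $h\circ\sigma_\#\circ f_*$ are both surjections of $\pi_1(Y,y_0)$ onto $\Z_2$, and two such surjections with equal kernels coincide; so the condition is $h\circ\sigma_\#\circ f_*=h$, which is precisely $f^*h=h$ in $H^1(Y,\Z_2)$. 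Equivalently, one can phrase this bundle-theoretically: a lift $\hat f$ of $f$ is the same data as an isomorphism $Y_h\cong f^*Y_h$ of double covers over $Y$, and since $f^*Y_h$ is classified by $f^*h$ and double covers are classified up to isomorphism by $H^1(Y,\Z_2)$, such a lift exists exactly when $f^*h=h$; this is the manifestation of the classifying-space picture $Y_h=c_h^*(S^\infty\to\R\P^\infty)$ with $c_h\colon Y\to B\Z_2$ representing $h$.

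The only point requiring care — the ``hard part'', such as it is — is the basepoint bookkeeping: $f$ need not fix $y_0$, so the lifting criterion must be stated with a connecting path $\sigma$, and one must check that the condition $\sigma_\#f_*(K_h)=K_h$ is independent of the choice of $\sigma$ and of the basepoint chosen in the fibre over $y_0$. Both independences are automatic here: $K_h$ is normal because it has index two, and $\Z_2$ is abelian, so inner automorphisms of $\pi_1(Y,y_0)$ act trivially on $\Hom(\pi_1(Y,y_0),\Z_2)$ and none of these choices affect the conclusion. (The same remark shows, for completeness, that the resulting lift $\hat f$ is automatically a diffeomorphism: the alternative would force $\hat f$ to factor through $p$, which would make $f$ lift to $Y_h$ as a map $Y\to Y_h$, impossible since $f_*$ is an isomorphism of $\pi_1(Y,y_0)$ while $K_h$ has index two.) Once these points are noted, the proof is a direct translation and needs no further input about the smooth structure of $Y$.
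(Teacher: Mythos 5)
Your argument is correct. The paper states this result as standard and gives no proof, so there is nothing to compare against; your covering-space-theory derivation — translating the lifting criterion for $f\circ p$ through $p$ into an equality of index-two kernels and then of homomorphisms to $\Z_2$ — is the usual and right one, and the basepoint bookkeeping with $\sigma_\#$ is handled properly (the key being that inner automorphisms act trivially on $\Hom(\pi_1,\Z_2)$). One small remark: the closing parenthetical about $\hat f$ being a diffeomorphism is not really a consequence of ``the same remark''; what makes it work is that $\hat f$ is automatically a covering map between covers of $Y$, so $\deg(p)\deg(\hat f)=\deg(f\circ p)=\deg(p)$ forces $\deg\hat f=1$ (or, more elementarily, $f^{-1}$ also lifts and the composite of the two lifts is a deck transformation). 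This point is not required by the lemma as stated, so it does not affect the correctness of the proof.
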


Double covers of $Q$ are parametrised by $H^1(Q,\Z_2)$ and spin structures correspond to elements which are non-zero on restriction to the fibres of $Q\to M$. So to lift the generator $f$ of the $\Z_m$-action to $P$ we must find a suitable invariant $h \in H^1(Q, \Z_2)$. For certain values of $m$, this can always be done.

\begin{lemma}\label{action lifts}
If $m$ is odd or $m=2^r$ then there is a $\Z_m$-invariant element of $H^1(Q, \Z_2)$ corresponding to a spin structure $P \to Q$.
\end{lemma}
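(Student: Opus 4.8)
The plan is to analyse the cohomology $H^1(Q,\Z_2)$ via the Gysin/Leray--Serre spectral sequence of the frame bundle $Q \to M$, whose fibre is $\SO(3)$ (working with the oriented frame bundle of the oriented hyperbolic $3$-manifold $M$). First I would record that, since $H^1(\SO(3),\Z_2) \cong \Z_2$, the relevant portion of the spectral sequence gives a short exact sequence
$$
0 \to H^1(M,\Z_2) \to H^1(Q,\Z_2) \to H^0(M, H^1(\SO(3),\Z_2)) \xrightarrow{d_2} H^2(M,\Z_2),
$$
and that the transgression $d_2$ sends the fibre generator to $w_2(M)$, which vanishes because every oriented $3$-manifold is spin. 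Hence $H^1(Q,\Z_2)$ sits in a short exact sequence $0 \to H^1(M,\Z_2) \to H^1(Q,\Z_2) \to \Z_2 \to 0$, and the spin structures are exactly the preimages of the nonzero element of the quotient $\Z_2$ — equivalently, the classes restricting nontrivially to the fibre. Everything here is $\Z_m$-equivariant, the $\Z_m$-action being induced by $f$.

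Next I would reduce the existence of an invariant spin structure to a statement about the induced $\Z_m$-action on the vector space $H^1(M,\Z_2)$ over $\F_2$. Fix one spin structure, i.e. one lift $h_0 \in H^1(Q,\Z_2)$ of the fibre generator. Then $f^*h_0 - h_0$ lands in the subspace $H^1(M,\Z_2)$, and defines a $\Z_m$-cocycle $c\colon \Z_m \to H^1(M,\Z_2)$ (indeed $f^{*m} = \mathrm{id}$, since the $\Z_m$-action on $Q$ has order $m$). Adjusting $h_0$ by an element $v \in H^1(M,\Z_2)$ changes this cocycle by the coboundary $f^*v - v$; so an invariant spin structure exists precisely when the class of $c$ vanishes in $H^1(\Z_m, H^1(M,\Z_2))$, group cohomology with the given module structure. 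This is where the hypothesis on $m$ enters and is the crux of the argument: I expect the main obstacle to be controlling this obstruction group. When $m$ is odd, $H^1(\Z_m, V) = 0$ for any $\F_2$-vector space $V$ because multiplication by $|{\Z_m}| = m$ is invertible on the $2$-torsion module while also annihilating group cohomology — so there is nothing to check. When $m = 2^r$, the obstruction group need not vanish outright, so here I would argue differently: I would produce a $\Z_m$-invariant spin structure by hand using the geometry, exploiting that the branch geodesic is preserved and that $f$ rotates the normal bundle. Concretely, the total space $Q$ admits an $f$-equivariant deformation retract onto a simpler equivariant subcomplex, or one can use that $f$ has order a power of $2$ to run a Smith-theory / fixed-point argument: the quotient $Q/\Z_m$ is again (the frame bundle of) the orbifold $S^3$ with cone locus $K$, and one checks directly that $H^1$ of this orbifold-frame-bundle has a class restricting nontrivially to the fibre, whose pullback to $Q$ is then automatically $\Z_m$-invariant.

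Assembling these: in the odd case, vanishing of $H^1(\Z_m,-)$ on $\F_2$-modules immediately gives the invariant class; in the $m=2^r$ case, the explicit construction on the quotient frame bundle (pulled back to $Q$) supplies it. In either case the resulting invariant $h \in H^1(Q,\Z_2)$ restricts nontrivially to the fibres of $Q \to M$ and so corresponds to a spin structure $P \to Q$, proving the lemma. The part I would spend the most care on is the $m=2^r$ case, making precise that the orbifold $S^3$-frame-bundle has the required $H^1$-class; one natural way is a Mayer--Vietoris decomposition of $S^3$ into a tubular neighbourhood of $K$ and its complement, matching the model computation of \S\ref{model singularity} on the overlap.
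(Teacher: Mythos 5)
Your treatment of the odd case is correct and is really the same argument as the paper's, phrased in heavier language: the paper simply averages a spin class over $\Z_m$, and averaging is exactly how one proves $H^1(\Z_m,V)=0$ for an $\F_2$-module $V$ when $m$ is odd. The spectral-sequence setup and the identification of the obstruction as a class in $H^1(\Z_m, H^1(M,\Z_2))$ are sound and correctly locate where the hypothesis on $m$ must enter.

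The $m=2^r$ case, however, has a genuine gap. You correctly observe that the obstruction group need not vanish for $2$-power $m$, but none of the three alternatives you float (an equivariant retract, a Smith-theory argument, or showing $Q/\Z_m$ itself carries a spin structure) is carried out, and none visibly uses the hypothesis $m=2^r$; in particular your proposed Mayer--Vietoris decomposition of $S^3$ into a neighbourhood of $K$ and its complement is identical for every $m$, so it cannot by itself single out prime-power degrees. The paper's proof rests on a specific, nontrivial fact from knot theory that your plan does not identify: for a cyclic branched cover $M\to S^3$ of a knot of degree $2^r$, one has $H^1(M,\Z_2)=0$ (see p.~16 of \cite{gordon}; this ultimately reflects that $|H_1(M;\Z)|$ is a product of values of the Alexander polynomial at roots of unity and is odd). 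Once this is in hand, your own short exact sequence $0\to H^1(M,\Z_2)\to H^1(Q,\Z_2)\to \Z_2\to 0$ immediately collapses to $H^1(Q,\Z_2)\cong\Z_2$, so the unique spin class is automatically $\Z_m$-invariant. Without this input, the obstruction in $H^1(\Z_{2^r}, H^1(M,\Z_2))$ is simply not controlled, and the proof is incomplete.
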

\begin{proof}
If $m$ is odd, take any $h \in H^1(Q,\Z_2)$ whose restriction to a fibre is non-zero and average over the group $\Z_m$ to give an invariant element. Since $m$ is odd, the fibrewise restriction remains non-zero.

When $m=2^r$ we use a lemma from knot theory: the $2^r$-fold cover $M' \to S^3$ branched along a knot has $H^1(M, \Z_2) = 0$ (see, e.g., page 16 of \cite{gordon}). It follows that $M$ has a unique spin structure, $H^1(Q, \Z_2)\cong \Z_2$ and the generator of $H^1(Q,\Z_2)$ is $\Z_m$-invariant. 
\end{proof} 

So, when $m$ is odd or $m=2^r$, there is a choice of spin structure $P\to Q$ for which  generator of the $\Z_m$-action lifts. Upstairs in $P$, the lifted action has order $2m$. This can be seen by considering the $\Z_m$-action on a fibre of $Q$ over a point in $K$: here we have lifted the standard action of rotation by $2\pi/m$ on $\SO(3)$ to its double cover $\SU(2)$; it is straightforward to check the order upstairs is $2m$. The action of $\Z_{2m}$ on the universal cover $\SL(2,\C)$ of $P$ is generated (again, in appropriate coordinates) by right multiplication by the matrix $U$ which is diagonal with entries $\varphi$ and $\varphi^{-1}$, where $\varphi = e^{\pi i/m}$. 

Now we can ``twist'' the action around the knot, to introduce a singularity just as in the model case. We define an action of $\Z_m$ on $P$ by sending the generator to conjugation by $U$. More correctly, this defines an action of $\Z_m$ on the universal cover of $P$; but we have only altered the original $\Z_{2m}$-action by left-multiplication and all deck transformations come from right-multiplication, so the $\Z_m$-action on $\SL(2,\C)$ commutes with the action of $\pi_1(M)$ by deck transformations and hence descends to a $\Z_m$-action on $P$. We consider the quotient $P/\Z_m$. Just as the model singularity projects $\SL(2,\C)/\Z_m \to H^3/\Z_m$ there is a projection $P/\Z_m \to S^3$. Away from the knot $K$ this is a locally trivial $S^3$-bundle, over a small neighbourhood of $K$ we have precisely the model singularity considered above in \S\ref{model singularity}. It follows that we can glue in a neighbourhood of the exceptional divisor from the model crepant resolution to obtain a crepant resolution $X\to P/\Z_m$, giving a complex manifold with trivial canonical bundle.

\begin{lemma}\label{C*-action}
The resolution $X$ admits a fixed-point free $\C^*$-action.
\end{lemma}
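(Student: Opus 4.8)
**Proof proposal for Lemma \ref{C*-action}.**

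The plan is to exhibit the $\C^*$-action explicitly, building it out of the left-multiplication freedom that was already used to construct the model singularity in \S\ref{model singularity}, and then to check that this local action is in fact \emph{global} and lifts through the crepant resolution. First I would recall the structure of $X$: away from $K$ it is the locally trivial $S^3$-bundle $P/\Z_m \to S^3\setminus K$, where $P$ is (a quotient of) $\SL(2,\C)$ and the $S^3$-fibres are cosets of $\SU(2)$; over a tubular neighbourhood of $K$ it agrees with the model resolution $W \to \SL(2,\C)/\Z_m$ of \S\ref{model singularity}. In the model picture the free $\C^*$-action is left-multiplication by the diagonal subgroup $\{\mathrm{diag}(\lambda,\lambda^{-1})\}\subset\SL(2,\C)$; this commutes with the twisted $\Z_m$-action (conjugation by $U$, since the diagonal subgroup is abelian and contains $U$ up to scaling... more precisely left-multiplication commutes with conjugation composed with right-multiplication), descends to $\SL(2,\C)/\Z_m$, acts transitively on the singular locus, and lifts to the crepant resolution $W$ because the resolution is canonically built from the normal $A_m$-slice, on which left-multiplication acts only by moving the base point of the $\C^*$-factor.

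The key steps, in order, would be: (i) identify the diagonal $\C^*\subset\SL(2,\C)$ acting by left-multiplication and verify it commutes with the deck group $\pi_1(M)$ (which acts by right-multiplication) and with the twisted $\Z_m$-action (conjugation by $U$); hence it descends to a $\C^*$-action on $P/\Z_m$. (ii) Check this action is globally well-defined on $P/\Z_m$, not just near $K$: the subtlety is that the diagonal $\C^*$ depends on the choice of geodesic (equivalently, on a point at infinity fixed by the rotation axis) — but in fact what one wants is the action that over each point of $S^3$ rotates the $S^3$-fibre along the lifted $\SU(2)$-direction determined by... actually the cleanest route is to observe that the $\C^*$-action is the one generated by the right-invariant vector field on $\SL(2,\C)$ corresponding to the Lie-algebra element $\mathrm{diag}(1,-1)$ \emph{as transported by the $\Z_m$-twisting}; because this vector field is right-invariant it is globally defined on every quotient by a right action, and because $\mathrm{diag}(1,-1)$ is fixed by conjugation by the diagonal $U$ it descends through the $\Z_m$-quotient. (iii) Check the $\C^*$-action extends over the exceptional divisor of the crepant resolution $X\to P/\Z_m$: near $K$ the resolution is the model resolution $W$, and step (i) already noted $\C^*$ lifts to $W$; one needs only that the two descriptions of the $\C^*$-action (global right-invariant field away from $K$; left-multiplication by diagonals near $K$) agree on the overlap, which they do since in the coordinates of \S\ref{model singularity} the right-invariant field generated by $\mathrm{diag}(1,-1)$ \emph{is} left-multiplication by $\{\mathrm{diag}(\lambda,\lambda^{-1})\}$ after the twist. (iv) Finally, verify the action is fixed-point free: away from $K$ it is fixed-point free because it is a nonzero right-invariant field on a quotient of $\SL(2,\C)$ (left-multiplication by a one-parameter subgroup is always free on $\SL(2,\C)$ and this descends); over $K$, in the model picture the $\C^*$ acts freely on $\SL(2,\C)/\Z_m$ by \S\ref{model singularity} and this freeness passes to the crepant resolution $W$ since $W$ is fibred over the free-$\C^*$ quotient $\C^*\times V \to V$.

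I expect the main obstacle to be step (ii)/(iii): patching the local left-multiplication $\C^*$-action near the knot together with a globally-defined action on the $S^3$-bundle part, and verifying these are the \emph{same} action. The conceptual resolution is that one should not think of the $\C^*$ as "left-multiplication by a fixed subgroup" globally (that has no invariant meaning on the bundle $P/\Z_m \to S^3$) but rather as the flow of a right-invariant vector field on the universal cover $\SL(2,\C)$; right-invariance guarantees descent to $P$ and to all quotients by right actions, and the particular choice of Lie-algebra generator is dictated (up to the $\Z_m$-twist) by the requirement that it commute with conjugation by $U$ — exactly the diagonal Cartan direction. Once this reinterpretation is in place the compatibility on the overlap is immediate from the coordinate formulas of \S\ref{model singularity}, and fixed-point-freeness follows because a one-parameter subgroup of $\SL(2,\C)$ never has a fixed point under left-multiplication, a property preserved under all the quotients and the crepant resolution in play.
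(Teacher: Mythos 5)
Your argument matches the paper's: left-multiplication by the diagonal $\C^*$ on $\SL(2,\C)$ commutes with both the twisted $\Z_m$-action (conjugation by the diagonal matrix $U$) and the deck transformations (right-multiplications), so it descends to $P/\Z_m$, lifts to the resolution $X$ via the local model of \S\ref{model singularity}, and is free because it is free on $\SL(2,\C)$. Note only that the ``subtlety'' you flag in step~(ii) is illusory---the twisted $\Z_m$-action on the universal cover is conjugation by a single fixed matrix $U$, so the diagonal $\C^*$ is already a well-defined global subgroup with no dependence on a choice of geodesic; your reinterpretation via a right-invariant vector field generated by $\mathrm{diag}(1,-1)$ is a correct but unnecessary repackaging of the same fact (and the abandoned half-sentence about ``rotating the $S^3$-fibre'' would have been misleading, as the $\C^*$-action does not preserve the fibres of $\SL(2,\C)\to H^3$).
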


\begin{proof}
The $\Z_m$-action on $P$ is given (on the universal cover $\SL(2,\C)$) by conjugation by a diagonal matrix with determinant 1. Hence, on the universal cover, it commutes with left-multiplication by the whole $\C^*$ of such diagonal matrices. Since the remaining deck transformations are all given by right-multiplication, they also commute with left-multiplication by $\C^*$, hence this $\C^*$-action descends to $P/\Z_m$ and then lifts to the resolution $X$. The action is free on $X$ since  the same is true of the $\C^*$-action on $\SL(2,\C)$. 
\end{proof}

\subsection{Simple connectivity and non-Kählerity}

To show that our examples are simply connected, we begin with the following standard lemma. 

\begin{lemma}\label{fundamental group lemma}
Let $X$ and $Y$ be two finite dimensional CW complexes and let $f \colon X \to Y$ be a surjective map with connected fibres. Suppose that $Y$ has an open cover by sets $U_i$ such that for any $y \in U_i$ the inclusion homomorphism $\pi_1(f^{-1}(y)) \to \pi_1(f^{-1}(U_i))$ is an isomorphism. Then the following sequence is right-exact:
$$
\pi_1(f^{-1}(y)) \to \pi_1(X) \to \pi_1(Y)\to 0.
$$
\end{lemma}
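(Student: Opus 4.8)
The plan is to establish right-exactness of the sequence $\pi_1(f^{-1}(y)) \to \pi_1(X) \to \pi_1(Y)\to 0$ by combining a local-to-global argument over the cover $\{U_i\}$ with a van Kampen-type gluing. First I would fix a basepoint $x_0 \in X$ with $y_0 = f(x_0)$ and observe that surjectivity of $\pi_1(X) \to \pi_1(Y)$ is the easy half: given a loop in $Y$ based at $y_0$, subdivide it so that each piece lies in some $U_i$; over each $U_i$ the hypothesis says $f^{-1}(U_i)$ has the same $\pi_1$ as a fibre, which is connected, so $f^{-1}(U_i)$ is connected, and one can lift each subdivided arc to a path in $X$, correcting the endpoints within the connected fibres $f^{-1}(y)$ to obtain an actual lift of the loop (up to homotopy). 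Since the fibres are connected, the choice of corrections does not affect the class of the lifted loop modulo the image of $\pi_1(f^{-1}(y_0))$, which is exactly what we need.

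Next, I would verify exactness at $\pi_1(X)$, i.e.\ that the kernel of $\pi_1(X)\to\pi_1(Y)$ is precisely the image of $\pi_1(f^{-1}(y_0))$. The inclusion ``image $\subseteq$ kernel'' is immediate since $f$ collapses each fibre to a point. For the reverse inclusion, take a loop $\sigma$ in $X$ based at $x_0$ whose image $f\circ\sigma$ is null-homotopic in $Y$. The key move is to use a nerve/CW argument: refine the cover (passing to $f^{-1}(U_i)$ and using that $X$, $Y$ are finite CW complexes, so the cover may be taken finite and the nerve is a finite simplicial complex) and apply van Kampen to $X$ written as the union of the $f^{-1}(U_i)$. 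The hypothesis that $\pi_1(f^{-1}(y)) \xrightarrow{\ \sim\ } \pi_1(f^{-1}(U_i))$ lets us replace each $\pi_1(f^{-1}(U_i))$ in the van Kampen presentation by the fibre group, and the combinatorics of how these patch together is governed by the nerve of $\{U_i\}$, hence by $\pi_1(Y)$. Chasing through the resulting presentation of $\pi_1(X)$ shows that any element dying in $\pi_1(Y)$ must already lie in the image of a single fibre group $\pi_1(f^{-1}(y_0))$.

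The main obstacle is the bookkeeping in the van Kampen step: one must be careful that the isomorphisms $\pi_1(f^{-1}(y)) \to \pi_1(f^{-1}(U_i))$, for varying $i$ and varying basepoint $y$, are compatible enough to be spliced, and that the overlaps $f^{-1}(U_i \cap U_j)$ (which need not satisfy the hypothesis) do not contribute extra relations beyond those already coming from the fibre and from $\pi_1(Y)$. The cleanest route is probably to first prove the statement when $Y$ is contractible with a single chart (where it reduces to showing $\pi_1(f^{-1}(y)) \to \pi_1(f^{-1}(U))$ is onto, which is the hypothesis together with connectedness of the fibres), and then induct on the number of cells of $Y$, attaching one cell at a time and applying van Kampen to the pushout $X = f^{-1}(Y^{(k-1)}) \cup_{f^{-1}(\partial e^k)} f^{-1}(e^k)$; right-exactness is preserved under such pushouts because $\pi_1$ is right-exact (sends pushouts to pushouts). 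This inductive version avoids dealing with the whole cover at once and isolates the one genuinely geometric input — surjectivity of the fibre group onto $\pi_1$ of each chart — which is exactly what the hypothesis provides.
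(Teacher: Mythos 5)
The paper does not actually prove this lemma; it is labelled ``standard'' and invoked without argument, so there is no proof of record to compare against, and I can only assess your sketch directly.

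The exactness-at-$\pi_1(X)$ step is where the genuine gaps lie, and you partly flag them yourself. Van Kampen applied to the cover $\{f^{-1}(U_i)\}$ requires control of $\pi_1(f^{-1}(U_i \cap U_j))$, and the hypothesis gives none. Your fallback of inducting on the cells of $Y$ has a parallel problem: the hypothesis is not inherited by lower skeleta, since $\pi_1(f^{-1}(y)) \to \pi_1(f^{-1}(U_i \cap Y^{(k-1)}))$ need not be an isomorphism even when $\pi_1(f^{-1}(y)) \to \pi_1(f^{-1}(U_i))$ is. And the closing line, that right-exactness is preserved under such pushouts because $\pi_1$ sends pushouts to pushouts, conflates the right-exactness of the functor $\pi_1$ with the right-exactness of the fibre sequence you are trying to establish; it is an assertion, not an argument. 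The surjectivity half also elides a point: $f$ is not a fibration, so ``lift each subdivided arc'' is not available. What is available is path-connectedness of $f^{-1}(U_i)$, which produces a path upstairs whose projection need not be path-homotopic to the given arc of $\gamma$; in fact the hypothesis forces $\pi_1(f^{-1}(U_i)) \to \pi_1(U_i)$ to be the zero map, so the needed correction loops live precisely where lifting fails.

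The standard route — and presumably what the authors had in mind — avoids van Kampen entirely. Given $\sigma$ in $X$ with $f\circ\sigma$ bounding a disk $H\colon D^2 \to Y$, triangulate $D^2$ so finely that each $2$-simplex maps into some $U_i$, lift the $1$-skeleton to $X$ using path-connectedness of the $f^{-1}(U_i)$ (agreeing with $\sigma$ on $\partial D^2$), and observe that $[\sigma]\in\pi_1(X)$ is then a product of conjugates of classes of small loops, each lying in a single $f^{-1}(U_i)$. The isomorphism hypothesis pushes each such small loop into a single fibre; connectedness of fibres and the coherence of the fibre-to-$f^{-1}(U_i)$ isomorphisms as $y$ varies let one transport these classes to the base fibre $f^{-1}(y_0)$, and the same transport shows the image of $\pi_1(f^{-1}(y_0))$ is normal in $\pi_1(X)$, so a product of conjugates of fibre classes is itself a fibre class. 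A one-dimensional version of this lifting argument also gives surjectivity. This uses the hypothesis exactly once per small loop and never touches the overlaps, which is what the hypothesis is designed to do.
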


%\begin{proof}
%Surjectivity of $\pi_1(X)\to \pi_1(Y)$ follows from the fact that $f$ is surjective with connected fibres. We must also prove that if $\gamma$ is a path based at $x\in f^{-1}(y)$ with $f(\gamma)$ contractible, then $\gamma$ is homotopic in $X$ to a path lying entirely in the fibre $f^{-1}(y)$. 

%Since $f(\gamma)$ is contractible, there is a map from the square $[0,1]\times[0,1]$ into $X$ with boundary $f(\gamma)$. Consider a fine grid on the square, of size $1/N$ say, such that each $1/N \times 1/N$ square is contained in one of the $U_i$. Whilst the square needn't lift, the grid does, using connectivity of the fibres, and the lift can be chosen with boundary $\gamma$. Using the lifted grid we can write $\gamma$ as the product of $N^2$ paths, each of which follows some path on the lifted grid, goes round a small square and then comes back along the same path. Each small square lies in $f^{1-}(U_i)$ for some $U_i$ hence each of these $N^2$ paths is homotopic to a path contained entirely in the fibre $f^{-1}(y)$.
%\end{proof}

\begin{lemma}\label{simply-connected}
Let $X$ be the complex threefold associated to a $2\pi/m$-hyperbolic knot, as described above. Then $X$ is simply-connected.
\end{lemma}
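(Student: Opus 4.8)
The plan is to apply the right-exactness of the homotopy sequence from Lemma~\ref{fundamental group lemma} to the projection $f\colon X \to S^3$, and then to argue that the image of $\pi_1$ of a generic fibre is already trivial in $\pi_1(X)$, whence $\pi_1(X) = \pi_1(S^3) = 0$. First I would verify the hypotheses of Lemma~\ref{fundamental group lemma}: away from the knot $K$, the map $X \to S^3$ is a locally trivial $S^3$-bundle with simply-connected fibre, so over any small ball $U_i$ disjoint from $K$ the inclusion $\pi_1(f^{-1}(y)) \to \pi_1(f^{-1}(U_i))$ is an isomorphism (both are trivial); over a tubular neighbourhood of $K$ one has the model crepant resolution of $\SL(2,\C)/\Z_m$ glued in, and there one must check that the fibre over a point of $K$ --- a resolved $A_{m-1}$-type configuration sitting in $S^3 = \SL(2,\C)/\SU(2)$ over the branch circle --- has fundamental group mapping isomorphically onto that of the full preimage of the tubular neighbourhood. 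Both fibres here should again be simply connected (the resolution replaces the $\Z_m$-fixed $\mathbb{CP}^1$-type locus by a chain of rational curves, keeping things simply connected), so the hypotheses hold with the cover $\{S^3 \setminus K,\ \text{tubular nbhd of }K\}$ refined to balls.

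Granting this, Lemma~\ref{fundamental group lemma} gives right-exactness of
$$
\pi_1(f^{-1}(y)) \to \pi_1(X) \to \pi_1(S^3) \to 0,
$$
and since $\pi_1(S^3)=0$ the map $\pi_1(f^{-1}(y)) \to \pi_1(X)$ is surjective. It therefore suffices to show that the generic fibre $f^{-1}(y) \cong S^3$ bounds, i.e.\ that its generator maps to zero in $\pi_1(X)$. This is where the ``twist'' does its work: the generic fibre is a right coset of $\SU(2)$ inside $\SL(2,\C)/\Z_m$, but its image in $\pi_1(X)$ can be computed by sliding $y$ toward the knot $K$. Near $K$ the fibre $S^3 = \SU(2)$ degenerates in the model $\SL(2,\C)/\Z_m$: the $\Z_m$-action by conjugation by $U$ has the diagonal $\C^*$ as fixed locus, and the $S^3$-fibre over a point of $\gamma$ collapses onto the $A_{m-1}$ configuration in a way that kills $\pi_1(S^3)$. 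Concretely, in the model resolution $W \to \SL(2,\C)/\Z_m$ the preimage of a tubular neighbourhood of the branch geodesic deformation retracts onto a simply-connected set (a $\C^*$ times the resolved $A_{m-1}$ fibre times the exceptional chain), so the loop generating $\pi_1$ of a nearby smooth $S^3$-fibre is nullhomotopic there, hence nullhomotopic in $X$.

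The main obstacle I anticipate is the second point: carefully showing that the generator of $\pi_1(S^3)$ of a generic fibre dies in $\pi_1(X)$, i.e.\ producing an explicit nullhomotopy (or, equivalently, checking that the inclusion of a generic $S^3$-fibre factors through the simply-connected local model near $K$). One must be precise about how the smooth $S^3 = \SU(2)$-fibres limit onto the singular/resolved fibre over $K$ and confirm that a loop $S^1 \subset \SU(2)$ generating $\pi_3$-level... rather $\pi_1$... is in fact trivial --- but of course $\pi_1(S^3)=0$ already, so the real content is that the \emph{global} $\pi_1(X)$ receives nothing new beyond what the fibre carries, which is handled entirely by verifying the local-triviality-of-$\pi_1$ hypothesis in Lemma~\ref{fundamental group lemma} on the neighbourhood of $K$. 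So, on reflection, the crux is purely the hypothesis check over the tubular neighbourhood of $K$: one needs that $\pi_1$ of the resolved model fibre over a point of $K$ maps isomorphically to $\pi_1$ of the resolved model over a whole neighbourhood --- both being trivial --- which reduces to understanding $\pi_1$ of the crepant resolution $W$ of $\SL(2,\C)/\Z_m$ and of its further quotient by $\Z$. I would dispatch this by noting $\SL(2,\C)$ is simply connected, $\Z_m$ acts with connected (indeed $\C^*$) fixed locus so $\SL(2,\C)/\Z_m$ is simply connected, and crepant resolution of isolated-type $A_{m-1}$ strata does not change $\pi_1$; the extra $\Z$-quotient closing up the geodesic only affects $\pi_1(S^3\setminus K)$-type behaviour which is already accounted for in $\pi_1(Y) = \pi_1(S^3) = 0$. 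Then right-exactness finishes the proof.
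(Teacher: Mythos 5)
Your overall plan (apply Lemma~\ref{fundamental group lemma}, cover $S^3$ by small balls, observe both the generic fibre $S^3$ and the base $S^3$ are simply connected, conclude $\pi_1(X)=0$) is the right strategy, but there is a real gap in the execution that the paper's proof is specifically structured to avoid.

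You apply Lemma~\ref{fundamental group lemma} in a \emph{single} step to the composite $f\colon X\to S^3$. The paper instead applies it \emph{twice}: once to $P/\Z_m\to S^3$, and once to the resolution $r\colon X\to P/\Z_m$. This factoring is not cosmetic, because it keeps the fibres simple. For $P/\Z_m\to S^3$ the fibres are $\SU(2)$ or $\SU(2)/\Z_m$, both homeomorphic to $S^3$, and the local hypothesis is checked by a $\Z_m$-equivariant radial retraction of $\SL(2,\C)|_B$ onto the $\SU(2)$ over the centre of a small ball $B$. For $r\colon X\to P/\Z_m$ the fibres are points or the chain $\Theta_m$, and the local hypothesis follows because near the singular curve $P/\Z_m$ is a product $D\times(\C^2/\Z_m)$ and $X$ is $D\times A_m$. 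In your one-stage version, the fibre of $f$ over a point $p\in K$ is the \emph{total} preimage $r^{-1}\bigl(\SU(2)/\Z_m\bigr)$, which is not an $S^3$: the orbifold fibre $\SU(2)/\Z_m$ meets the singular elliptic curve $C$ in a circle, and the total transform replaces that circle by a copy of $S^1\times\Theta_m$ glued to the rest of the (punctured) $S^3$ in a way that does \emph{not} simply produce the strict transform. You would need to argue directly that this singular $3$-complex is simply connected and that its inclusion into $f^{-1}(B)$ is a $\pi_1$-isomorphism for small balls $B$ meeting $K$; this is exactly what you assert without proof when you write ``both fibres here should again be simply connected.'' Reducing to the global simple-connectivity of $\SL(2,\C)/\Z_m$, as in your final paragraph, is not what the lemma asks for --- it wants the local statement over a ball, for the \emph{resolved} model --- and while $\SL(2,\C)/\Z_m$ is indeed simply connected, you still have to connect that to the hypothesis check for the total preimage over a small ball in $S^3$.

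There are also local misdescriptions: you write the fibre as ``$S^3=\SL(2,\C)/\SU(2)$,'' but $\SL(2,\C)/\SU(2)\cong H^3$ is the \emph{base}; the fibre is $\SU(2)\cong S^3$. The $\Z_m$-fixed locus in the model is the $\C^*$ of diagonal matrices, not a ``$\mathbb{CP}^1$-type locus.'' And the long middle detour about showing the generic $S^3$-fibre ``bounds'' is unnecessary (as you yourself realise part-way through): once right-exactness is in place, $\pi_1(S^3)=0$ on both ends gives the result immediately; the entire burden is in verifying the local hypothesis of Lemma~\ref{fundamental group lemma} near $K$, and that is precisely the step left unfinished.
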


\begin{proof}
First, we consider the projection $f \colon P/\Z_m \to S^3$. Away from the knot $K$ this is a locally trivial $S^3$-fibration. The fibre of $f$ over a point in $K$ is the quotient $\SU(2)/\Z_m$ via the action (\ref{Z_m action}) which is easily seen to be homeomorphic to $S^3$. Hence, to apply Lemma \ref{fundamental group lemma} to $f$, we just need to check that each point of the knot $K$ is contained in an open set $U \subset S^3$ such that $\pi_1(f^{-1}(U))=0$. It suffices to do this in the local model of $\Z_m$ acting on $H^3$ fixing a geodesic (as in \S\ref{model singularity}). Pick a small geodesic ball $B$ centred at a point $p$ on the fixed geodesic. Parallel transport in the radial directions of $B$ gives a retraction of the frame bundle over $B$ to the fibre over $p$ and, hence, a retraction of the portion $S|_B$ of $\SL(2,\C)$ lying over $B$ to the copy of $\SU(2)$ over $p$. This retraction is $\Z_m$-equivariant, hence the portion $(S|_B)/\Z_m$ of $\SL(2,\C)/\Z_m$ lying over $B/\Z_m$ retracts onto $\SU(2)/\Z_m \cong S^3$. In particular, it is simply connected as required.

It follows that $\pi_1(P/\Z_m) = 0$. To deduce that the resolution $X$ is also simply connected we apply Lemma \ref{fundamental group lemma} again, this time to the map $r \colon X \to P/\Z_m$. Away from the singular locus, $r$ is one-to-one; meanwhile each point on the singular locus has preimage a chain of $m-1$ copies of $S^2$. So to apply the lemma we must show that each point of the singular locus has a neighbourhood $U$ for which $\pi_1(r^{-1}(U))=0$. This follows from the fact that, near any point in the singular locus of $P/\Z_m$, the singularity looks locally like the product $D \times (\C^2/\Z_m)$ of a disc with the $A_m$-singularity and the resolution looks locally like the product $D \times A_m$ of a disc with the $A_m$-resolution. 
\end{proof}

\begin{lemma}
There is no compatible Kähler structure on the complex threefold associated to a $2\pi/m$-hyperbolic knot.
\end{lemma}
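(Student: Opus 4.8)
The plan is to combine the fixed-point-free $\C^*$-action of Lemma~\ref{C*-action} with the classical observation that a symplectic circle action on a compact, simply-connected manifold is Hamiltonian and therefore has fixed points. So the strategy is: restrict the $\C^*$-action to its maximal compact subgroup $S^1$, average a hypothetical compatible K\"ahler form to make it $S^1$-invariant, show the $S^1$-action is then Hamiltonian, and derive a contradiction from the existence of fixed points of the Hamiltonian.

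First I would record the inputs. The threefold $X$ is compact: it is a crepant resolution of the compact complex orbifold $P/\Z_m$, which is a finite quotient of the spin bundle $P$ of the closed hyperbolic $3$-manifold $M$, so $P$, $P/\Z_m$ and $X$ are all compact. It is simply-connected by Lemma~\ref{simply-connected}, so in particular $H^1_{\mathrm{dR}}(X;\R)=0$. By Lemma~\ref{C*-action} it carries a free holomorphic $\C^*$-action; restricting to the maximal compact subgroup $S^1\subset\C^*$, and using that a subgroup of a freely acting group still acts freely, we get a holomorphic $S^1$-action on $X$ with no fixed points.

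Now suppose for contradiction that $X$ admits a compatible K\"ahler form $\omega$, i.e.\ a closed positive $(1,1)$-form. Since $S^1$ acts by biholomorphisms, the average $\tilde\omega:=\int_{S^1}\phi_\theta^{\,*}\omega\,\diff\theta$ is again closed, of type $(1,1)$ and positive (positivity and $J$-invariance are preserved under pullback by biholomorphisms and under averaging), hence a K\"ahler form, and it is $S^1$-invariant. Let $V$ be the vector field generating the $S^1$-action. From $\mathcal L_V\tilde\omega=0$ and $\diff\tilde\omega=0$ together with Cartan's formula we get $\diff(\iota_V\tilde\omega)=0$, and since $H^1_{\mathrm{dR}}(X;\R)=0$ there is a smooth $h\colon X\to\R$ with $\iota_V\tilde\omega=\diff h$. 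As $X$ is compact, $h$ attains a maximum at some $p\in X$; there $\diff h_p=0$, so $\iota_{V(p)}\tilde\omega_p=0$, and non-degeneracy of $\tilde\omega$ forces $V(p)=0$. Thus $p$ is a fixed point of the $S^1$-action, contradicting the previous paragraph. Hence no compatible K\"ahler structure can exist.

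The argument is essentially routine, the heavy lifting having already been done in Lemmas~\ref{C*-action} and~\ref{simply-connected}. The only points requiring a word of care are that averaging a compatible K\"ahler form over the holomorphic $S^1$-action yields a form compatible with the \emph{same} complex structure, and that $X$ is genuinely compact so that the Hamiltonian $h$ attains a maximum; neither is a real obstacle.
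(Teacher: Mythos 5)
Your proof is essentially identical to the paper's: both restrict the free $\C^*$-action of Lemma~\ref{C*-action} to an $S^1$, average a hypothetical compatible K\"ahler form to make it $S^1$-invariant, use $b_1=0$ (from simple connectivity, Lemma~\ref{simply-connected}) to conclude the action is Hamiltonian, and then contradict freeness via the fixed point of the Hamiltonian. You have merely filled in the routine details (compactness, Cartan's formula, positivity preserved under averaging) that the paper leaves implicit.
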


\begin{proof}
The fact that our examples are simply-connected (Lemma \ref{simply-connected}) and admit a free $\C^*$-action (Lemma \ref{C*-action}) implies that they have no compatible Kähler metric. For if they admitted a compatible Kähler metric, by averaging it would be possible to find an $S^1\subset \C^*$ invariant Kähler form. Now $b_1=0$ implies that the symplectic $S^1$-action would, in fact, be Hamiltonian and hence have fixed points. 
\end{proof}

\subsection{Diffeomorphism type for $\pi$-hyperbolic knots}
\label{diffeo for pi-hyperbolic knots}

We now turn to the question of the diffeomorphism type of our examples in the topologically most simple case, that of a $\pi$-hyperbolic knot. We proceed via Wall's classification theorem \cite{wall}. Wall's result states that oriented, smooth, simply-connected, spin, 6-manifolds with torsion free cohomology are determined up to oriented-diffeo\-morphism~by:
\begin{itemize}
\item 
the integer $b_3$; 
\item
the symmetric trilinear map $H^2 \times H^2 \times H^2 \to \Z$ given by cup-product
\item 
the homomorphism $H^2 \to \Z$ given by cup-product with the first Pontrjagin class.
\end{itemize}

The goal of this section is to prove

\begin{theorem}\label{topology of resolution}
Given a $\pi$-hyperbolic knot, the resulting complex threefold constructed above is diffeomorphic to $2(S^3 \times S^3) \#(S^2 \times S^4)$.
\end{theorem}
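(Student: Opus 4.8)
The plan is to apply Wall's classification theorem directly, so I need to compute the three invariants ($b_3$, the cup-product cubic form on $H^2$, and $p_1$) for the threefold $X$ associated to a $\pi$-hyperbolic knot $K$, and check that they agree with those of $2(S^3\times S^3)\#(S^2\times S^4)$. First I would fix notation: $M\to S^3$ is the double branched cover, $P\to M$ the spin bundle, and $X\to P/\Z_2$ the crepant resolution. Since $m=2$, the exceptional divisor of the crepant resolution over each point of the singular elliptic curve is a single rational curve (a chain of $m-1=1$ copies of $\P^1$), so the resolution simply replaces the singular curve by an $S^2$-bundle over it. I would first understand the topology of $P/\Z_2$ as a stratified space: away from $K$ it is an $S^3$-bundle over $S^3\setminus K$, which is trivial since $S^3\setminus K$ is an open handlebody (homotopy equivalent to a wedge of circles, but $H^1$ acts trivially here because — one must check — the structure group reduces appropriately), and near $K$ it is the model singularity $D\times(\C^2/\Z_2)$.

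The key computation is the cohomology of $X$. I would set up a Mayer--Vietoris decomposition $X = X_0 \cup X_K$, where $X_0$ is the part lying over $S^3\setminus(\text{nbhd of }K)$ — a trivial $S^3\times(S^3\setminus K)$ — and $X_K$ is a tubular neighbourhood of the resolved locus, namely the total space of the resolution over $D\times T^2$ (the elliptic curve being the branch geodesic, which for a fibred situation over $S^3$ should have the homotopy type of a torus; more carefully it is a circle bundle, hence a 2-torus or Klein bottle — orientability forces $T^2$). The intersection $X_0\cap X_K$ is an $S^3$-bundle over $(S^3\setminus K)\cap(\partial\text{-nbhd of }K)\simeq T^2\times I$ or an annulus cross circle, etc. From this I extract $H^*(X)$. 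I expect $H^2(X)$ to be rank 2, generated by the class $E$ Poincar\'e-dual to the exceptional $S^2$-bundle (more precisely the class of a fibre $\P^1$'s Poincar\'e dual, a 4-cycle) and by the pullback of the generator of $H^2$ of the base $S^3\times(S^3\setminus K)$ region — wait, $S^3$ has no $H^2$, so the second generator must come from the $T^2$ in the branch locus via the exceptional divisor structure. Matching the Betti numbers of $2(S^3\times S^3)\#(S^2\times S^4)$: this connected sum has $b_0=b_6=1$, $b_1=b_5=0$, $b_2=b_4=1$, $b_3=4$ (namely $4 = 2\cdot 2$ from the two $S^3\times S^3$ summands, plus $0$ from $S^2\times S^4$). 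So I must show $b_2(X)=1$ and $b_3(X)=4$.

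Then I compute the cup-product structure. For $2(S^3\times S^3)\#(S^2\times S^4)$: $H^2$ is generated by the class $a$ from the $S^2\times S^4$ summand, and $a^3=0$, $a\cup(\text{anything in }H^3)=0$ since the $S^3\times S^3$ summands don't interact with $a$, and $a^2$ generates $H^4$ with $a^2\cup a = 0$ because $S^2\times S^4$ has $a^2\cdot(\text{pt-class})$... actually in $S^2\times S^4$, if $a=[\text{pt}\times S^4]^\vee$ then $a^2=0$. Let me be careful: in $S^2\times S^4$, $H^2=\langle \alpha\rangle$, $H^4=\langle\beta\rangle$ with $\alpha\beta$ the generator of $H^6$ but $\alpha^2=0$. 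So the cubic form on $H^2$ is identically zero, and $p_1$ pairs trivially with $H^2$ as well (since $S^2\times S^4$ and $S^3\times S^3$ are both... well $p_1(S^3\times S^3)=0$ and $p_1(S^2\times S^4)$ paired with $\alpha$ — need $\langle p_1, \alpha\rangle = 0$, which holds since $TS^2\oplus TS^4$ has $p_1$ a multiple of the $S^4$ generator). So I must verify that for $X$: the generator $u\in H^2(X)$ satisfies $u^3=0$, $u\cup p_1(X)=0$, and that the trilinear form and $p_1$-pairing both vanish identically. The vanishing of $u^3$ I would get from the fact that $u$ is pulled back from the 4-dimensional ambient structure or because $u$ is represented by a form supported near the exceptional locus whose triple self-intersection lives in a neighbourhood that retracts to something of dimension $<6$; alternatively $u=[\text{exceptional }\P^1]$-class has $u^3 = $ (self-intersection cubed) which for an $S^2$-bundle over $T^2$ resolving an $A_1$ singularity equals the relevant multiple of the Euler class, and $T^2$ being aspherical forces it to vanish. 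The $p_1$ computation uses that $X$ has trivial canonical bundle (so $c_1=0$), hence $p_1 = c_1^2 - 2c_2 = -2c_2$, and I would compute $\langle c_2(X), u\rangle$ via the adjunction/normal-bundle sequence along the exceptional locus.

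The main obstacle I anticipate is getting the Mayer--Vietoris bookkeeping exactly right — in particular pinning down that $b_2(X)=1$ rather than something larger, which requires knowing that no extra 2-classes come from the $S^3\setminus K$ factor interacting with the resolution, and pinning down $b_3(X)=4$ rather than $b_3=2$ (one copy of $S^3\times S^3$'s worth) — the extra rank-2 contribution to $H^3$ must come from the torus in the branch locus and its interaction with the $S^3$ fibre, i.e. from classes like $[\text{fibre }S^3]$ and $[S^1_{\text{branch}}\times S^1_{\text{fibre-in-}T^2}]$-type cycles; verifying these are non-torsion and independent, and that there's no further contribution, is the delicate point. A secondary subtlety is confirming the cohomology is torsion-free (a hypothesis of Wall's theorem): I would need $H_1(M;\Z)$ for the double branched cover to be of odd order or trivial so that no $2$-torsion creeps in via the spin bundle and the $\Z_2$-quotient, and then track torsion through the resolution — here I would invoke that for $\pi$-hyperbolic knots one can (after possibly restricting attention to knots whose double branched cover is a $\Z$-homology sphere, or arguing the torsion doesn't obstruct) arrange torsion-freeness, or else show directly that the specific classes generate freely.
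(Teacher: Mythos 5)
Your overall strategy is the same as the paper's: apply Wall's classification theorem and compute the three invariants. But your proposed computation differs at a point where there is a genuine gap, and your suggested fallback for handling torsion is off-track.

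The paper does not run Mayer--Vietoris directly on $X$. It first computes the \emph{integral} homology of the orbifold $P/\Z_2$ via Mayer--Vietoris over $S^3 = (S^3 \setminus K) \cup U$, showing the orbifold has the integral homology of $S^3 \times S^3$ (this works for any $m$, and does not use anything about $H_1(M)$ of the branched cover, because both pieces $f^{-1}(S^3\setminus K)$ and $f^{-1}(U)$ retract onto $S^3$-bundles over circles, so the sequence collapses to that of $(S^3\setminus K)\times S^3$ and $U\times S^3$). It then passes to $X$ via the long exact sequence of the pair $(X,E)$, using $X/E \cong (P/\Z_2)/C$ so that $H_j(X,E) \cong H_j(P/\Z_2,C)$. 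This two-step route avoids the awkwardness you flag with the ``$X_K$'' piece (understanding the resolution over $D\times T^2$ directly), because the relative groups do not see the interior of the resolution.

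The concrete gap in your proposal is the torsion-freeness of $H_2(X)$. The rational Betti number count gives $b_2=1$ readily, but Wall's theorem needs $H_2(X)\cong\Z$. From the pair sequence one sees that $H_2(E)\to H_2(X)$ is surjective, with $E\cong C\times\C\P^1$ so $H_2(E)\cong\Z^2$ generated by a $\C\P^1$-fibre and a $C$-fibre; since $b_2(X)=1$, torsion-freeness is exactly the statement that the $C$-fibre is null-homologous in $X$. You do not identify this as the crux, and the argument for it is the genuinely nontrivial step: the paper reduces to the model $\SL(2,\C)/(\Z_2\oplus\Z)$, uses the two-to-one ramified cover $\hat X\to X'$ obtained from blowing up $\SL(2,\C)/\Z$ along the fixed elliptic curve (analogous to $\O(-1)\to\O(-2)$ squaring onto the $A_2$-resolution), and then null-homologes $C$ inside $\hat X\cong$ (blow-up of) $S^3\times S^1\times\R^2$ by pushing a parallel Hopf-circle copy off to a $S^3\times S^1$ slice. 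Your fallback --- restricting to knots whose double branched cover is a $\Z$-homology sphere --- is a genuinely different and weaker hypothesis, and in any case would not by itself propagate through the $\Z_2$-quotient and resolution to give torsion-freeness of $H_2(X)$; the paper needs no such restriction.

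One smaller remark: your computation of the cubic form and $p_1$-pairing is essentially right and matches the paper --- since the normal bundle of $E\cong C\times\C\P^1$ is $\O(-2)$ pulled back from the $\C\P^1$ factor, the generator $e\in H^2(X)$ (Poincar\'e dual of $E$) restricts to $E$ as a class pulled back from $\C\P^1$, hence $e^2|_E=0$ and $e^3=0$; and $TX|_E\cong\O(-2)\oplus\O(2)\oplus\O$ topologically (using $c_1(X)=0$), so $\langle c_2,E\rangle=0$ and thus $\langle p_1,e\rangle=-2\langle c_2,e\rangle=0$. You would want to make this normal bundle identification explicit rather than appeal loosely to the ``$A_1$-resolution over an aspherical $T^2$.''
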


To compute the cohomology of the complex threefold, we begin with the topology of the orbifold $P/\Z_2$. In fact, the results here hold for $P/\Z_m$ with any choice of $m$.

Let $K \subset S^3$ be a $2\pi/m$-hyperbolic knot (with $m$ odd or $m=2^r$) and let $U$ be a small tubular neighbourhood of $K$. We write the boundary of $U$ as $S^1_1 \times S^1_2$ where $S^1_1$ is a meridian circle, which is contractible in $U$, and $S^1_2$ is a longitudinal circle, which is contractible in $S^3\setminus K$. Let $f\colon P/\Z_m \to S^3$ denote the projection from the complex orbifold to the hyperbolic orbifold. Write $X_1 = f^{-1}(S^3\setminus K)$ and $X_2 = f^{-1}(U)$.

In what follows, (co)homology groups are taken with coefficients in  $\Z$ unless explicitly stated. We begin with a couple of standard topological lemmas.

\begin{lemma}
Given any knot $K \subset S^3$, the knot complement has $H_1(S^3\setminus K) \cong \Z$, generated by the class of $S^1_1$, whilst $H_2(S^3\setminus K) \cong 0 \cong H_3(S^3 \setminus K)$.
\end{lemma}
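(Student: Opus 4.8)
The plan is to run the Mayer--Vietoris sequence for the decomposition $S^3 = A\cup V$, where $A = S^3\setminus K$ and $V$ is an open tubular neighbourhood of $K$: then $V$ is an open solid torus which deformation retracts onto $K\cong S^1$, the overlap $A\cap V$ is homotopy equivalent to the boundary torus $T^2=\partial V$, and $A\cup V = S^3$. The only geometric input beyond the homology groups of $S^1$, $T^2$ and $S^3$ is the behaviour of $H_1(T^2)$ under the inclusion $T^2\hookrightarrow V$: writing $\mu$ for the meridian and $\lambda$ for a longitude (which together generate $H_1(T^2)\cong\Z^2$), the meridian bounds a meridian disc in $V$, so $\mu\mapsto 0$, while $\lambda$ maps to a generator of $H_1(V)\cong\Z$. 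Alternatively, the two vanishing statements follow at once from Alexander duality $\tilde H_i(S^3\setminus K)\cong\tilde H^{2-i}(S^1)$, but I will use Mayer--Vietoris since it also yields the assertion about the meridian.

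For $H_3$, note that $A$ is a connected, noncompact $3$-manifold without boundary, so its top homology vanishes: $H_3(S^3\setminus K) = 0$. For $H_2$, the relevant stretch of the reduced sequence is
$$
H_3(S^3)\xrightarrow{\ \partial\ }H_2(T^2)\longrightarrow H_2(A)\oplus H_2(S^1)\longrightarrow H_2(S^3),
$$
in which $H_2(S^1)=H_2(S^3)=0$ and $H_3(S^3)\cong\Z\cong H_2(T^2)$; it remains only to see that $\partial$ is an isomorphism, which holds because a cycle representing the fundamental class $[S^3]$ splits into chains in $A$ and in $V$ whose common boundary represents $[\partial V]=[T^2]$, so $\partial[S^3]=\pm[T^2]$. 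Hence $H_2(S^3\setminus K)=0$.

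For $H_1$, the stretch
$$
H_2(S^3)\longrightarrow H_1(T^2)\longrightarrow H_1(A)\oplus H_1(S^1)\longrightarrow H_1(S^3)
$$
has both outer terms zero, so the middle arrow is an isomorphism $\Z^2\xrightarrow{\ \sim\ }H_1(A)\oplus\Z$ sending $\mu\mapsto([\mu]_A,0)$ and $\lambda\mapsto([\lambda]_A,\pm1)$. Comparing ranks (and using that $H_1(A)$ is finitely generated) gives $H_1(S^3\setminus K)\cong\Z$; and expressing the generator $(0,1)$ of the $\Z$-summand in terms of the basis $\{([\mu]_A,0),([\lambda]_A,\pm1)\}$ shows $[\lambda]_A$ is an integer multiple of $[\mu]_A$, whence $[\mu]_A$ on its own generates $H_1(A)$. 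Since $\mu$ is precisely the class of the meridian $S^1_1$, this is the stated generator.

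The computation is entirely routine; the only step that requires a moment's care is the identification of the connecting homomorphism $\partial\colon H_3(S^3)\to H_2(T^2)$ as an isomorphism --- equivalently, that it is the meridian rather than the longitude that generates $H_1$ --- and this is settled by the observation that $[S^3]$ restricts to the fundamental class of the splitting torus $\partial V$.
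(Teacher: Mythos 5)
Your proof is correct and is exactly the standard argument the paper has in mind: the authors state this lemma without proof (and explicitly invoke ``By Mayer--Vietoris'' for $H_1(S^3\setminus K)\cong\Z$ earlier in \S3). Your decomposition $S^3=A\cup V$, the identification of $\partial\colon H_3(S^3)\to H_2(T^2)$ with the fundamental class of the splitting torus, and the meridian/longitude bookkeeping for $H_1$ are all accurate, so there is nothing to add.
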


\begin{lemma}
Every $\SO(4)$-bundle over a 3-manifold with $H_2 \cong 0 \cong H_3$ is trivial.
\end{lemma}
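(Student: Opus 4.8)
The plan is to isolate the single obstruction to triviality of an oriented rank-$4$ vector bundle $E\to B$ and then verify it vanishes under the hypotheses; here $B$ is a $3$-manifold, which we may replace by a CW complex of dimension $\le 3$.

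\emph{Step 1: the only obstruction is $w_2(E)$.} Regarding $E$ as a principal $\SO(4)$-bundle, it is trivial exactly when its classifying map $B\to B\SO(4)$ is null-homotopic, equivalently when the associated principal bundle admits a section. Building such a section over successive skeleta, the obstruction to extending from the $k$-skeleton to the $(k+1)$-skeleton lies in $H^{k+1}(B;\pi_k(\SO(4)))$. Since $\SO(4)$ is connected and $\pi_2(\SO(4))=0$, the terms with $k=0$ and $k=2$ contribute nothing, and the group $H^{k+1}(B;\pi_k(\SO(4)))$ vanishes for $k\ge 3$ because $\dim B\le 3$; so the only potentially non-zero obstruction lives in $H^2(B;\pi_1(\SO(4)))=H^2(B;\Z_2)$ and equals $w_2(E)$. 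Equivalently, and perhaps more cleanly: $w_2(E)$ is precisely the obstruction to lifting $E$ along the double cover $\mathrm{Spin}(4)\to\SO(4)$; as $\mathrm{Spin}(4)\cong\SU(2)\times\SU(2)$ and $\pi_i(B\,\SU(2))\cong\pi_{i-1}(\SU(2))=0$ for $i\le 3$, every principal $\mathrm{Spin}(4)$-bundle over a complex of dimension $\le 3$ is trivial, hence so is every $\SO(4)$-bundle with $w_2=0$. It therefore suffices to prove $w_2(E)=0$.

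\emph{Step 2: $w_2(E)=0$.} I would prove the stronger statement $H^2(B;\Z_2)=0$. By the universal coefficient theorem $H^2(B;\Z_2)\cong\Hom(H_2(B),\Z_2)\oplus\mathrm{Ext}^1(H_1(B),\Z_2)$; the hypothesis $H_2(B)=0$ annihilates the first summand, and the second vanishes because $H_1(B)$ is torsion-free (in the case the lemma is applied, $B=S^3\setminus K$ and $H_1(B)\cong\Z$). Hence $w_2(E)=0$ and $E$ is trivial. The remaining hypothesis $H_3(B)=0$ enters only through the vanishing of $H^3(B;\Z)$: in the skeleton argument this is what keeps an obstruction from surviving in degree $3$, and it also appears in the alternative elementary route, where one uses $\rank E=4>3=\dim B$ to split off a trivial summand $E\cong E'\oplus\underline{\R}$ of rank $3$, notes that its Euler class lies in $H^3(B;\Z)=0$ and splits off another trivial line, and is left with a rank-$2$ oriented bundle classified by its Euler class in $H^2(B;\Z)=0$.

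The main point to watch is the universal-coefficient step: the vanishing of $H^2(B;\Z_2)$ (equivalently of $H^2(B;\Z)$) genuinely requires $H_1(B)$ to be free of $2$-torsion, which the homological hypotheses supply in the situation at hand; everything else is routine manipulation of the low-dimensional homotopy of $\SO(4)$.
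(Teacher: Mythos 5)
The paper states this lemma without any proof, treating it as a standard fact, so there is nothing to compare against directly; your argument is the expected standard one and is correct. Reducing to $w_2$ via the low homotopy of $\SO(4)$ (or, more cleanly, via the spin lift through $\mathrm{Spin}(4)\cong\SU(2)\times\SU(2)$) is exactly right, and the alternative route of peeling off trivial line summands is a nice supplement.

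Your most valuable observation is the caveat you flag at the end, though I would state it more forcefully: the hypotheses $H_2=0=H_3$ do \emph{not} by themselves force $H^2(B;\Z_2)=0$, because $\mathrm{Ext}^1(H_1,\Z_2)$ can be non-zero, and the lemma as literally written is in fact false. For a counterexample, take $B=\R\P^3\setminus\mathrm{int}(D^3)$, which deformation retracts onto $\R\P^2$ and so has $H_1=\Z_2$, $H_2=0$, $H_3=0$; the oriented rank-$4$ bundle $L\oplus L\oplus\underline{\R}^2$ (with $L$ the non-orientable real line bundle) has $w_2=w_1(L)^2\neq 0\in H^2(B;\Z_2)$ and so is non-trivial. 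The statement is safe where the paper uses it, namely over $B=S^3\setminus K$ with $H_1\cong\Z$, but the lemma really needs ``$H_1$ has no $2$-torsion'' (or $H^2(B;\Z)=0$) added to its hypotheses, and your phrase ``which the homological hypotheses supply in the situation at hand'' slightly obscures that the stated hypotheses alone do not suffice. One small inaccuracy: $H^2(B;\Z_2)=0$ is not literally equivalent to $H^2(B;\Z)=0$ — with $H_2=0$, the first says $H_1$ has no even-order torsion while the second says $H_1$ is torsion-free — but both hold for a knot complement, so this does not affect the argument.
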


We apply these results to show that for $j=1,2$, the subset $f^{-1}(S^1_j)$ carries all the homology of $X_j$.

\begin{lemma}\label{product}
$f \colon X_1 \to S^3\setminus K$ is a trivial $S^3$-fibration. The inclusion $f^{-1}(S^1_1) \to X_1$ induces an isomorphism on homology.
\end{lemma}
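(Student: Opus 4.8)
To establish Lemma~\ref{product} the plan is to recognise $X_1\to S^3\setminus K$ as a \emph{linear} $S^3$-bundle over the knot complement, trivialise it using the two standard lemmas above, and then read off the homology statement. For the first point, note that over $S^3\setminus K=(M\setminus\{\text{branch locus}\})/\Z_m$ the $\Z_m$-action on the spin bundle $P$ is free (it covers the free $\Z_m$-action on $M\setminus\{\text{branch locus}\}$) and acts on the $\SU(2)$-fibres by conjugation by $U$, i.e.\ by an \emph{inner} automorphism of $\SU(2)$. Writing $E\to M$ for the rank-$4$ real vector bundle associated to $P$ (so that $P$ is the unit sphere bundle of $E$), this $\Z_m$-action is covered by a fibrewise-linear $\Z_m$-action on $E$ over $M\setminus\{\text{branch locus}\}$ --- on the fibres one multiplies by $U^{-1}$, which is exactly what makes the action descend compatibly with the inner twist. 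Hence $E/\Z_m$ is a rank-$4$ vector bundle over $S^3\setminus K$ with unit sphere bundle $X_1$, so $X_1\to S^3\setminus K$ is an $\SO(4)$-bundle (and a locally trivial $S^3$-fibration, as already noted).

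Since $S^3\setminus K$ is a $3$-manifold with $H_2\cong0\cong H_3$ by the first standard lemma, the second standard lemma makes every $\SO(4)$-bundle over it trivial, so $X_1\cong(S^3\setminus K)\times S^3$; restricting over the meridian identifies $f^{-1}(S^1_1)$ with $S^1_1\times S^3$ so that the inclusion $f^{-1}(S^1_1)\hookrightarrow X_1$ becomes $\iota\times\mathrm{id}_{S^3}$ with $\iota\colon S^1_1\hookrightarrow S^3\setminus K$. By the first standard lemma $S^3\setminus K$ is connected, $H_1\cong\Z$ is generated by the class of $S^1_1$, and $H_i(S^3\setminus K)=0$ for $i\geq2$; hence $\iota$ induces an isomorphism on homology in every degree. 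As $H_\ast(S^3)$ is free, the K\"unneth theorem then identifies the map induced by the inclusion with the isomorphism $H_\ast(S^1_1)\otimes H_\ast(S^3)\to H_\ast(S^3\setminus K)\otimes H_\ast(S^3)$, which proves the second assertion.

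The step I expect to be the main obstacle is the identification of $X_1$ as a linear $S^3$-bundle: one must check that, over $S^3\setminus K$, the quotient of the spin bundle by the \emph{twisted} $\Z_m$-action retains a structure group inside $\SO(4)$ rather than only inside $\Diff(S^3)$ --- it is precisely here that the inner-ness of the conjugation by $U$ is used, and only then does the triviality lemma for $\SO(4)$-bundles apply. Granting that, the rest is routine bundle theory and a K\"unneth computation.
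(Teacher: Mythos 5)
Your proposal is correct and takes essentially the same approach as the paper: the crux in both cases is to recognise that the twisted $\Z_m$-action preserves the $\SO(4)$-structure on the $S^3$-bundle, after which triviality over the knot complement and the K\"unneth argument follow from the two preceding lemmas. The paper expresses this by observing that the group generated by the twisted $\Z_m$-action and the deck transformations lies in the image of $\SU(2)\times\SL(2,\C)$ in $\SO(4,\C)$, which manifestly preserves the $\SO(4)$-structure, whereas you obtain the same conclusion by lifting the action to the associated rank-$4$ spinor bundle $E$ (where, more precisely, the fibre action on $E$ that restricts on $S(E)\cong P$ to the given map is conjugation $v\mapsto U^{-1}vU$, which is indeed linear, of order $m$, and in $\SO(4)$); these are two packagings of the same observation.
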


\begin{proof}
$\SL(2,\C) \to H^3$ is an $\SU(2)$-bundle so, in particular is an $S^3$-bundle with structure group $\SO(4)$. Moreover, this $\SO(4)$ structure is preserved by the image of $\SU(2) \times \SL(2,\C)$ in $\SO(4,\C)$.  Accordingly, away from $K$, the map $f \colon X_1 \to S^3\setminus K$ has structure group $\SO(4)$. The result now follows from the previous two lemmas.
\end{proof}

\begin{lemma}\label{X2 retracts}
$X_2$ retracts to $f^{-1}(K)$. The inclusion $f^{-1}(S^1_2) \to X_2$ induces an isomorphism on homology.
\end{lemma}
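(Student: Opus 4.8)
The plan is to analyze the restriction of $f\colon X_2 \to U$ over the tubular neighbourhood $U$ of the knot, using the explicit model singularity from \S\ref{model singularity}. First I would observe that $U$ deformation retracts onto $K$, and that this retraction can be covered by a retraction upstairs. Concretely, $U$ is foliated by the geodesic discs normal to $K$ (the exponential images of the normal disc bundle), and radial contraction of each such disc toward its centre on $K$ is a deformation retraction of $U$ onto $K$. Exactly as in the proof of Lemma \ref{simply-connected}, parallel transport along these radial geodesics gives a $\Z_m$-equivariant retraction of the portion of the frame bundle over $U$ onto the fibre over $K$; passing to the double cover $P$ and then to the quotient $P/\Z_m = X_2$, this descends to a retraction of $X_2$ onto $f^{-1}(K)$. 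This establishes the first sentence.

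Next I would identify $f^{-1}(K)$ more explicitly. Over $K$ (a closed geodesic loop), we are exactly in the ``closed up'' model of \S\ref{model singularity}: the relevant piece is $\SL(2,\C)/(\Z_m\oplus\Z)$ restricted over the circle, which is the mapping torus of the $\Z_m$-action $A \mapsto U^{-1}AU$ on $\SU(2)$ under the monodromy given by right-multiplication by $\mathrm{diag}(a,a^{-1})$. In particular $f^{-1}(K)$ fibres over $K = S^1_2$ with fibre $\SU(2)/\Z_m \cong S^3$. Since $S^1_2$ is a circle, this is either the trivial $S^3$-bundle or the nonorientable one; but the structure group is connected (it sits inside the $\SU(2)\times\SL(2,\C)$-action, hence in $\SO(4)$ acting orientation-preservingly), so the bundle over $S^1_2$ is trivial and $f^{-1}(S^1_2) \cong S^1_2 \times S^3$. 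To finish I must show the inclusion $f^{-1}(S^1_2) \hookrightarrow X_2$ is a homology isomorphism. Having already retracted $X_2$ onto $f^{-1}(K)$, it suffices to show $f^{-1}(S^1_2) \to f^{-1}(K)$ is a homology isomorphism, where $S^1_2 \subset \partial U$ is the longitude. Both are $S^3$-bundles over circles, the longitude $S^1_2$ is homotopic inside $U$ (hence inside $S^3$, away from any singular behaviour, since $S^1_2$ misses $K$) to the core geodesic $K$, and this homotopy lifts; so $f^{-1}(S^1_2) \to f^{-1}(K)$ is a fibre-homotopy equivalence of trivial $S^3$-bundles over homotopic circles, hence a homotopy equivalence.

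The main obstacle I anticipate is bookkeeping around the twisting and the singular fibres: one must be careful that the fibre of $f$ over a point of $K$ really is $\SU(2)/\Z_m \cong S^3$ (not an orbifold point in a way that spoils the retraction argument), and that the retraction of $X_2$ is genuinely onto $f^{-1}(K)$ rather than onto some proper subset, given that the crepant resolution has been glued in near $K$. The cleanest way to handle this is to do everything in the local model $\SL(2,\C)/\Z_m$ over a geodesic ball, note the retraction there is $\C^*$-equivariant and compatible with the resolution (since the exceptional chain of $(m-1)$ rational curves sits over the fixed $\C^*$ and retracts with it), and then patch these local statements together over $K$; the homology computation then reduces to the Wang sequence of the trivial $S^3$-bundle over $S^1$, giving $H_*(X_2) \cong H_*(S^1 \times S^3)$ with the generators represented in $f^{-1}(S^1_2)$.
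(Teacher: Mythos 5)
Your argument follows essentially the same route as the paper's: both use the orthogonal retraction of $H^3$ onto the $\Z_m$-fixed geodesic, lifted to $\SL(2,\C)/(\Z\oplus\Z_m)$ and descended to $X_2$, and then identify $f^{-1}(K)$ and $f^{-1}(S^1_2)$ as trivial $S^3$-bundles over homotopic circles. One small remark: the worry you raise about the crepant resolution is moot, since $X_2=f^{-1}(U)$ is defined inside the \emph{orbifold} $P/\Z_m$ rather than the resolution $X$; the resolution only enters later, in Proposition \ref{integral homology of resolution}.
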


\begin{proof}
To prove that $X_2$ retracts as claimed it suffices to consider the model of \S\ref{model singularity}, $\SL(2,\C)/(\Z\oplus \Z_m)$. The orthogonal retraction of $H^3$ onto the $(\Z\oplus \Z_m)$-fixed geodesic induces a retraction of a small tubular neighbourhood of the geodesic loop in the quotient $H^3/(\Z\oplus \Z_m)$. This lifts to give the claimed retraction of the pre-image of the tubular neighbourhood in $\SL(2,\C)/(\Z\oplus \Z_m)$. It follows from this that the embedding $f^{-1}(S^1_2) \to X_2$  induces an isomorphism on homology. 
\end{proof}

Now we are in a position to compute the homology of the complex orbifold.

\begin{proposition}\label{integral homology of orbifold}
$P/\Z_m$ has the integral homology of $S^3 \times S^3$.
\end{proposition}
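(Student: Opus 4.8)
The plan is to compute $H_*(P/\Z_m)$ by Mayer--Vietoris, using the cover $P/\Z_m = X_1\cup X_2$ associated with the decomposition $S^3 = (S^3\setminus K)\cup U$ already in play, where $U$ is the tubular neighbourhood of $K$; thickening $U$ slightly so that $X_1 = f^{-1}(S^3\setminus K)$ and $X_2 = f^{-1}(U)$ form an open cover, $X_1\cap X_2$ deformation retracts onto $f^{-1}(T^2)$ with $T^2 = \partial U = S^1_1\times S^1_2$. All three pieces already have known homology: by Lemma~\ref{product}, $f$ restricts to a trivial $S^3$-bundle over $S^3\setminus K$, so $f^{-1}(T^2)\cong T^2\times S^3$ and, by K\"unneth, $H_*(f^{-1}(T^2))\cong H_*(T^2\times S^3)$, while $H_*(X_1)\cong H_*(S^1\times S^3)$ with the isomorphism induced by $f^{-1}(S^1_1)\hookrightarrow X_1$; by Lemma~\ref{X2 retracts}, $H_*(X_2)\cong H_*(S^1\times S^3)$ with the isomorphism induced by $f^{-1}(S^1_2)\hookrightarrow X_2$. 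Finally $P/\Z_m$ is connected, so $H_0 = \Z$.

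The real work is the identification of the maps $\alpha_n\colon H_n(f^{-1}(T^2))\to H_n(X_1)\oplus H_n(X_2)$, which is controlled by the standard homological behaviour of the two boundary circles: the meridian $S^1_1$ generates $H_1(S^3\setminus K)\cong\Z$ and is null-homologous in $U$, whereas the longitude $S^1_2$ is null-homologous in $S^3\setminus K$ and generates $H_1(U)\cong H_1(K)$. In degree $1$ this makes $\alpha_1\colon\Z^2\to\Z^2$ carry one basis circle onto a generator of each summand, hence an isomorphism. In degree $2$ one has $H_2(X_1) = H_2(X_2) = 0$ and $H_2(f^{-1}(T^2))\cong\Z$, generated by $[T^2]$. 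In degree $3$, the fundamental class of the $S^3$-fibre over a point of $T^2$ maps to a generator of $H_3(X_1)$ (the fibre generates the $S^3$-factor there), so $\alpha_3\colon\Z\to\Z^2$ is injective with primitive image, hence cokernel $\cong\Z$ and no torsion. In degree $4$, $[S^1_1\times S^3]$ generates $H_4(X_1)$ and dies in $H_4(X_2)$ because the meridian bounds a disc in $U$ (take the $f$-preimage), while $[S^1_2\times S^3]$ dies in $H_4(X_1)$ because the longitude bounds a Seifert surface and, by Lemma~\ref{X2 retracts}, generates $H_4(X_2)$; so $\alpha_4$ is an isomorphism $\Z^2\to\Z^2$.

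Chasing the long exact sequence then finishes it: $H_1(P/\Z_m) = \operatorname{coker}\alpha_1 = 0$; $H_2 = 0$ since $H_2(X_i) = 0$ and $\alpha_1$ is injective; $H_3$ sits in the sequence $0\to\operatorname{coker}\alpha_3\to H_3(P/\Z_m)\to\ker\alpha_2\to 0$, i.e.\ $0\to\Z\to H_3(P/\Z_m)\to\Z\to 0$, so $H_3(P/\Z_m)\cong\Z^2$; $H_4 = \operatorname{coker}\alpha_4 = 0$, using also that $\alpha_3$ is injective; $H_5 = 0$ since $H_5(X_i) = 0$ and $\alpha_4$ is injective; and $H_6\cong\ker\alpha_5 = H_5(f^{-1}(T^2))\cong\Z$, using $H_5(X_i) = H_6(X_i) = 0$. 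Everything in higher degrees vanishes, so $H_*(P/\Z_m)\cong H_*(S^3\times S^3)$ as claimed.

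I expect the only genuinely delicate step to be the middle paragraph: correctly placing the fibre class and the classes (boundary circle)$\times$(fibre) inside the homology of $X_1$ and $X_2$, i.e.\ reading off how the torus $T^2 = \partial U$ and its $f$-preimage sit inside $X_1$ and $X_2$. Lemmas~\ref{product} and~\ref{X2 retracts} are exactly what reduce this to the elementary facts about meridians and longitudes used above, and it matters that these classes land on \emph{primitive} elements of $\Z^2$, since that is what forces $H_3$ to be the torsion-free group $\Z^2$ rather than acquiring a finite cyclic summand.
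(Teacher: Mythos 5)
Your proof is correct and follows essentially the same route as the paper: the Mayer--Vietoris sequence for the cover $X_1\cup X_2$, fed by Lemmas~\ref{product} and~\ref{X2 retracts} and the standard meridian/longitude behaviour. The paper compresses your explicit chase by observing that, since the images of $(i_1)_*$ and $(i_2)_*$ agree with those for the pair $((S^3\setminus K)\times S^3, U\times S^3)$, the whole sequence can be identified with the Mayer--Vietoris sequence computing $H_*(S^3\times S^3)$, but the content is the same.
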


\begin{proof}
We compute the cohomology of $P/\Z_m$ by applying the Mayer--Vietoris sequence to the pair $(X_1, X_2)$. For this we need the maps $(i_1)_*$ and $(i_2)_*$ induced on homology by the inclusions $i_1\colon X_1\cap X_2 \to X_1$ and $i_2 \colon X_1 \cap X_2\to X_2$. It follows from Lemma \ref{product} that $X_1 \cap X_2$ is the product $f^{-1}(U\setminus K) \times S^3$. In particular, it retracts to $f^{-1}(S^1_1 \times S^1_2) = S^1_1 \times S^1_2 \times S^3$. 

Let $i'_1$ and $i'_2$ denote the compositions of the retraction of $X_1 \cap X_2$ to $S^1_1 \times S^1_2 \times S^3$ with a consecutive projection to $S^1_1 \times S^3$ and $S^1_2 \times S^3$ respectively. The preceding Lemmas \ref{product} and \ref{X2 retracts} show that the images of the maps $(i_1)_*$ and $(i_2)_*$ coincide with the images of the maps $(i'_1)_*$ and $(i'_2)_*$ respectively. It follows from this that the Mayer--Vietoris sequence of the pair $(X_1,X_2)$ can be identified with the Mayer--Vietoris sequence of the pair $((S^3 \setminus K) \times S^3, U \times S^3)$. Since this second sequence calculates the homology of $S^3\times S^3$, the proposition is proved. 
\end{proof}

The singular locus of $P/\Z_m$ is an elliptic curve $C$. The next step is to compute the homology of the pair $(P/\Z_m, C)$.

\begin{lemma}\label{relative homology}
The non-zero relative homology groups $H_j(P/\Z_m, C)$ are 
$$
H_2(P/\Z_m, C) \cong\Z^2,
\quad
H_3(P/\Z_m, C) \cong \Z^3,
\quad
H_6(P/\Z_m, C) \cong \Z.
$$
\end{lemma}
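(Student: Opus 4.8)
The plan is to run the long exact sequence in homology of the pair $(P/\Z_m, C)$, feeding in Proposition \ref{integral homology of orbifold} for the homology of the ambient orbifold together with the observation that the singular locus $C$, being an elliptic curve, is homeomorphic to a two-torus; thus $H_0(C)\cong\Z$, $H_1(C)\cong\Z^2$, $H_2(C)\cong\Z$, and $H_j(C)=0$ for $j\geq 3$, while $H_j(P/\Z_m)$ equals $\Z$ for $j=0,6$, equals $\Z^2$ for $j=3$, and vanishes otherwise. Since $C$ is a smooth submanifold of $P/\Z_m$ it is a neighbourhood deformation retract, so $(P/\Z_m,C)$ is a good pair and the long exact sequence is available.

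Because $H_*(P/\Z_m)$ is supported in degrees $0,3,6$ and $H_*(C)$ in degrees $0,1,2$, the sequence breaks into short, easily chased pieces. I would first dispose of degrees $4$ and $5$, where every term vanishes, so $H_4(P/\Z_m,C)=H_5(P/\Z_m,C)=0$. In degree $6$ the segment $0=H_6(C)\to H_6(P/\Z_m)\to H_6(P/\Z_m,C)\to H_5(C)=0$ yields $H_6(P/\Z_m,C)\cong\Z$. In degree $2$ the segment $0=H_2(P/\Z_m)\to H_2(P/\Z_m,C)\to H_1(C)\to H_1(P/\Z_m)=0$ yields $H_2(P/\Z_m,C)\cong H_1(C)\cong\Z^2$. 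At the bottom of the sequence, $C$ and $P/\Z_m$ are connected so $H_0(C)\to H_0(P/\Z_m)$ is an isomorphism, which together with $H_1(P/\Z_m)=0$ forces $H_0(P/\Z_m,C)=H_1(P/\Z_m,C)=0$.

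The only degree that is not immediate is $3$, where the sequence gives
$$
0\to H_3(P/\Z_m)\to H_3(P/\Z_m,C)\to H_2(C)\to 0,
$$
that is, $0\to\Z^2\to H_3(P/\Z_m,C)\to\Z\to 0$; since the quotient $\Z$ is free the extension splits and $H_3(P/\Z_m,C)\cong\Z^3$. I do not expect any genuine obstacle here: the whole argument is a formal diagram chase, and the one thing to keep an eye on is the bookkeeping that shows every potentially non-trivial connecting map $H_j(C)\to H_j(P/\Z_m)$ in fact vanishes for degree reasons, so that the sole extension problem, the one in degree $3$, is automatically split.
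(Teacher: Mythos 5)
Your proof is correct and follows the same approach as the paper, which simply invokes the long exact sequence of the pair $(P/\Z_m, C)$ together with Proposition \ref{integral homology of orbifold}; you have filled in the diagram chase that the paper leaves implicit. The one extension problem (in degree $3$) is indeed split because the quotient is free, exactly as you observe.
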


\begin{proof}
This follows immediately from the exact sequence of the pair $(P/\Z_m, C)$ along with the fact that $P/\Z_m$ has the integral homology of $S^3 \times S^3$.
\end{proof}

To convert this into information about the homology of the resolution $X$ we restrict to the case when $K$ is $\pi$-hyperbolic. 
 
\begin{proposition}\label{integral homology of resolution}
Let $X$ be the complex threefold constructed from a $\pi$-hyperbolic knot, as described above. Then $X$ has the integral homology of $2(S^3 \times S^3)\#(S^2 \times S^4)$.
\end{proposition}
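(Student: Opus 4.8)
The plan is to compare $X$ with the orbifold $P/\Z_2$ via the crepant resolution $r\colon X\to P/\Z_2$, exploiting the relative homology of $(P/\Z_2,C)$ computed in Lemma~\ref{relative homology}. I need two inputs. First, a description of the exceptional set $E=r^{-1}(C)$: for a $\pi$-hyperbolic knot $m=2$, so in a slice transverse to the elliptic curve $C$ the singularity is the $A_1$-singularity $\C^2/\Z_2$, whose minimal resolution replaces the singular point by a single rational curve; carrying this out over all of $C$ exhibits $E$ as a $\C\P^1$-bundle (a ruled surface) over $C$. By Leray--Hirsch, $H^*(E;\Z)$ is free over $H^*(C;\Z)$ on two generators, in degrees $0$ and $2$, so $H_*(E)$ is torsion-free with Betti numbers $1,2,2,2,1$ in degrees $0,\dots,4$. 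Second, since $r$ restricts to a homeomorphism $X\setminus E\to(P/\Z_2)\setminus C$, and $E\subset X$, $C\subset P/\Z_2$ admit tubular neighbourhoods deformation retracting onto them, two applications of excision give $H_*(X,E)\cong H_*(P/\Z_2,C)$, which by Lemma~\ref{relative homology} is $\Z^2$ in degree $2$, $\Z^3$ in degree $3$, $\Z$ in degree $6$, and zero otherwise.

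With these in hand I would run the long exact sequence of the pair $(X,E)$. Recall $X$ is a closed oriented (complex) $6$-manifold, simply connected by Lemma~\ref{simply-connected}; thus $H_0(X)=H_6(X)=\Z$ and $H_1(X)=H_5(X)=0$. Since $H_4(X,E)=H_5(X,E)=0$, the sequence gives $H_4(X)\cong H_4(E)\cong\Z$. Since $H_1(X)=0=H_1(X,E)$, the connecting map $\partial\colon H_2(X,E)\to H_1(E)$ is a surjection of free groups of rank $2$, hence an isomorphism; its injectivity decouples the sequence, giving $H_2(X)\cong\operatorname{coker}\!\big(H_3(X,E)\to H_2(E)\big)$ together with a short exact sequence
$$
0\to H_3(E)\to H_3(X)\to\ker\!\big(H_3(X,E)\to H_2(E)\big)\to 0.
$$
The kernel on the right is a subgroup of $\Z^3$, hence free, so this sequence splits and $H_3(X)$ is torsion-free. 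Poincar\'e duality and universal coefficients then give $H_2(X)\cong H^4(X)\cong\Hom(H_4(X),\Z)\cong\Z$. Finally, because the links of $C$ and of $E$ are odd-dimensional (so have vanishing Euler characteristic) and $X\setminus E\cong(P/\Z_2)\setminus C$, we get $\chi(X)-\chi(P/\Z_2)=\chi(E)-\chi(C)=0$, while $\chi(P/\Z_2)=\chi(S^3\times S^3)=0$ by Proposition~\ref{integral homology of orbifold}. Comparing with $\chi(X)=4-b_3(X)$ (using $b_0=1$, $b_1=0$, $b_2=b_4=1$) yields $b_3(X)=4$ and hence $H_3(X)\cong\Z^4$. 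Thus $H_*(X)$ is torsion-free with Betti numbers $1,0,1,4,1,0,1$, which are exactly those of $2(S^3\times S^3)\#(S^2\times S^4)$.

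I expect the main difficulty to be controlling the connecting homomorphisms in the pair sequence: specifically, justifying that $\partial\colon H_2(X,E)\to H_1(E)$ is an isomorphism (so that $H_2$ and $H_3$ of $X$ decouple) and that $H_3(X)$ has no torsion, without which Poincar\'e duality fails to pin $H_2(X)$ down to exactly $\Z$. If one prefers not to invoke Lemma~\ref{simply-connected}, injectivity of $\partial$ can instead be obtained from naturality of the pair sequences under $r\colon(X,E)\to(P/\Z_2,C)$, together with the facts that $\partial\colon H_2(P/\Z_2,C)\to H_1(C)$ is an isomorphism (immediate, as $H_1(P/\Z_2)=H_2(P/\Z_2)=0$) and that $r_*\colon H_1(E)\to H_1(C)$ is an isomorphism since $E\to C$ is an $S^2$-bundle. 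The remaining points — the identification of $E$ as a ruled surface over $C$ and the excision isomorphism $H_*(X,E)\cong H_*(P/\Z_2,C)$ — are routine but still need to be set up with care.
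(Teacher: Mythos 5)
Your proof is correct, and the skeleton coincides with the paper's: excision gives $H_*(X,E)\cong H_*(P/\Z_2,C)$, the long exact sequence of the pair together with Poincar\'e duality pins down the Betti numbers, and the ``$H_3(X)$ is sandwiched between free groups'' argument shows $H_3(X)$ is torsion-free. The genuine departure is how you finish the computation of $H_2(X)$. You deduce $H_2(X)\cong\Z$ essentially for free from Poincar\'e duality and universal coefficients: once $H_4(X)\cong\Z$ and $H_3(X)$ is known to be free, $H_2(X)\cong H^4(X)\cong\Hom(H_4(X),\Z)\oplus\mathrm{Ext}^1(H_3(X),\Z)\cong\Z$. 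The paper instead identifies the two generators of $H_2(E)\cong\Z^2$ (the $\C\P^1$-fibre and the $C$-fibre) and runs a considerably more involved geometric argument---passing to the local model $\SL(2,\C)/(\Z_2\oplus\Z)$, building a ramified double cover of the resolution, and producing an explicit $3$-chain---to show the $C$-fibre class dies under $H_2(E)\to H_2(X)$. Your route is shorter; what the paper's route buys is an explicit geometric generator for $H_2(X)$ (the exceptional $\C\P^1$-fibre), which is then reused in the proof of Theorem~\ref{topology of resolution} when computing the cup product on $H^2$ by restriction to $E$. Two further small economies in your argument: you only need $E\to C$ to be a $\C\P^1$-bundle (Leray--Hirsch handles any such bundle), not the trivialisation $E\cong\C\P^1\times C$ that the paper asserts; and you extract $b_3=4$ from $\chi(X)=0$ rather than the paper's alternating-sum bookkeeping in the long exact sequence---both equivalent. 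Your alternative derivation of the injectivity of $\partial\colon H_2(X,E)\to H_1(E)$ by naturality under $r$ is also valid, though as you note it is not strictly needed once you adopt the Poincar\'e-duality route to $H_2$.
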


\begin{proof}
Let $E \subset X$ denote the exceptional divisor of the resolution $X \to P/\Z_2$. The singularity in $P/\Z_2$ is locally the product of the elliptic curve $C$ with the $A_2$-singularity in $\C^2$. So $E\cong \C\P^1 \times C$, with normal bundle $\mathcal O(-2)$ pulled back from $\C\P^1$
 
Since $X/E$ (where $E$ is crushed to a point) is homeomorphic to $(P/\Z_2)/C$, it follows that the relative groups $H_j(X,E) \cong H_j(P/\Z_2, C)$ are given by Lemma~\ref{relative homology}. We now consider the long exact sequence of the pair $(Z,E)$. Since $H_5(E) = 0=H_5(Z,E)$, we have that $H_5(Z) = 0$. Meanwhile, since $H_5(Z,E) = 0=H_4(Z,E)$ we have that $H_4(Z) \cong H_4(E) \cong \Z$. 

From here we can compute all the Betti numbers. Indeed, by Poincaré duality, $b_2(Z) = b_4(Z) = 1$; now considering the alternating sum of the ranks of the groups in the part of the sequence from $H_4(Z,E) = 0$ to $H_1(Z) = 0$ gives $b_3 =4$. So the rational homology of $Z$ coincides with that of the connected sum. It remains to show that $H_2(Z)$ and $H_3(Z)$ are torsion-free. 

Since $H_4(Z,E) = 0$, the long exact sequence gives
$$
0
\to 
H_3(E) 
\to 
H_3(Z) 
\stackrel{j}{\to} 
H_3(Z,E) \to \cdots
$$
As $H_3(Z,E) \cong \Z^3$ is torsion-free, all torsion in $H_3(Z)$ must be contained in $\ker j$. However, $\ker j \cong H_3(E) \cong \Z^2$ is torsion-free, hence so is $H_3(Z)$.

The argument for $H_2(Z)$ is more involved. First, note that since $H_1(Z) = 0$ and $H_2(Z,E) \cong \Z^2 \cong H_1(E)$, the map $H_2(Z,E) \to H_1(E)$ in the sequence of the pair $(Z,E)$ is an isomorphism. Now the preceding part of the sequence gives that $H_2(E) \to H_2(Z)$ is surjective. $E \cong C \times \C\P^1$, so $H_2(E) \cong \Z^2$ is generated by the class of a $\C\P^1$-fibre and the class of a $C$-fibre. Since $H_2(Z)$ has rank 1, to show that $H_2(Z) \cong \Z$ it suffices to show that the $C$-fibre in $E$ is null-homologous in $Z$. 

For this it suffices to consider the model case of the resolution $X'\to \SL(2,\C)/(\Z_2 \oplus \Z)$ as in \S\ref{model singularity}. This is because if there is a three-chain bounding $C$ in $X'$, then the retraction in $H^3$ onto the fixed geodesic pushes this 3-chain into an arbitrarily small neighbourhood of the exceptional divisor; hence there is such a 3-chain in the resolution~$X$. 

$\SL(2,\C)/(\Z_2\oplus\Z)$ is the quotient of $\SL(2,\C)/\Z$ by an involution with fixed locus an elliptic curve $C$. Away from $C$, there is a two-to-one map $\SL(2,\C)/\Z \dashrightarrow X'$. This extends to the blow-up $\hat X$ of $\SL(2,\C)/\Z$ along $C$ to give a ramified double cover $\hat X \to X'$. (This is analogous to the fact that, for bundles over $\C\P^1$, squaring $\mathcal O(-1) \to \mathcal O(-2)$ gives a ramified double cover of the $A_2$-resolution.) The ramification locus in $\hat X$ is the exceptional divisor $\hat E$ of the blow up $\hat X \to \SL(2,\C)/\Z$. $\hat E$  is identified with branch locus in $X'$, which is the exceptional divisor $E$ of the resolution $X' \to \SL(2,\C)/(\Z_2 \oplus \Z)$. So $\hat E \cong \C\P^1 \times C$ and to show that a $C$-fibre of $E$ is null-homologous in $X'$ it suffices to show that a $C$-fibre of $\hat E$ is null-homologous in $\hat X$.

To prove this statement, first note that $\SL(2,\C)/\Z$ is homeomorphic to $S^3 \times S^1 \times \R^2$. The elliptic curve $C$ corresponds to taking the product of a Hopf circle $S^1 \subset S^3$  with $S^1\times \{\mathrm{pt}\}$. Choosing a different Hopf circle gives another copy $C'$ of $C$ which lies entirely inside $(\SL(2,\C)/\Z)\setminus C$. On the one hand, $C'$ is null-homologous in $(\SL(2,\C)/\Z)\setminus C$ (simply move it in the $\R^2$-direction so that it lies in a complete copy of $S^3\times S^1$). On the other hand, when thought of as a 2-cycle in the blow-up $\hat X$, $C'$ is homologous to a $C$-fibre of $\hat E$. Hence the $C$-fibre of $\hat E$ is zero in homology and the proposition is proved.
\end{proof}

\begin{lemma}
Let $X$ be the complex threefold constructed from a $\pi$-hyperbolic knot, as described above. Then all the Chern classes of $X$ vanish.
\end{lemma}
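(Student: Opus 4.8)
The plan is to dispose of $c_1(X)$ and $c_3(X)$ quickly and then concentrate on the only nontrivial case, $c_2(X)\in H^4(X;\Z)$, throughout exploiting that Proposition~\ref{integral homology of resolution} has already shown $H^*(X;\Z)$ to be torsion free with Betti numbers $(b_0,\dots,b_6)=(1,0,1,4,1,0,1)$, and that $E\cong\C\P^1\times C$ with $C$ elliptic and $N_{E/X}\cong p^*\mathcal O_{\C\P^1}(-2)$, where $p\colon E\to\C\P^1$ is the projection.

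First, since $X\to P/\Z_2$ is a crepant resolution of a complex orbifold with trivial canonical bundle, $K_X$ is holomorphically trivial, so $c_1(X)=0$ exactly (not merely rationally). Next, $H^6(X;\Z)\cong\Z$ is torsion free and $c_3(X)$ pairs with the fundamental class to give the Euler characteristic $\chi(X)=\sum(-1)^i b_i=1-0+1-4+1-0+1=0$; hence $c_3(X)=0$.

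For $c_2(X)$ I would first reduce to a computation on the exceptional divisor $E$. Since $H^*(X;\Z)$ is torsion free, universal coefficients gives $H^4(X;\Z)\cong\Hom(H_4(X;\Z),\Z)$, and the long exact sequence of the pair $(X,E)$ used in the proof of Proposition~\ref{integral homology of resolution} identifies $H_4(X;\Z)$ with $H_4(E;\Z)\cong\Z$, generated by the fundamental class $[E]$. So it suffices to check $\langle c_2(X),[E]\rangle=\int_E c_2(TX|_E)=0$, using naturality of Chern classes. For this I would feed the normal bundle sequence $0\to TE\to TX|_E\to N_{E/X}\to 0$ into the Whitney formula: with $h=p^*c_1(\mathcal O_{\C\P^1}(1))$, so that $h^2=0$, and using $c(TC)=1$ (elliptic curve) and $c(T\C\P^1)=1+2h$, one gets
$$c(TX|_E)=(1+2h)(1)(1-2h)=1-4h^2=1,$$
whence $c_2(TX|_E)=0$ and therefore $c_2(X)=0$.

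I do not expect a genuine obstacle here: $c_1$ and $c_3$ are formal consequences of triviality of $K_X$ and the Betti number count, and the content of $c_2(X)=0$ — which would of course fail for a general Calabi--Yau threefold — has effectively been absorbed into the earlier topological analysis (torsion freeness of $H^*(X)$ and the identification of $E$ together with its normal bundle). What remains is the one-line Whitney computation above; the only point meriting care is confirming that $[E]$ generates $H_4(X;\Z)$ and that the pairing $H^4(X;\Z)\times H_4(X;\Z)\to\Z$ is perfect, both of which follow from the torsion freeness already established.
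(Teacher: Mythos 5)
Your argument is correct and follows essentially the same route as the paper: $c_1=0$ from triviality of $K_X$, $c_3=0$ as the Euler characteristic computed from the already-established Betti numbers, and $c_2=0$ by pairing against the generator $[E]$ of $H_4(X;\Z)$ and observing that $TX|_E$ splits topologically into line bundles pulled back from $\C\P^1$. The only cosmetic difference is that you make the normal-bundle sequence and Whitney product explicit, whereas the paper records the resulting isomorphism $TX|_E\cong\mathcal O(-2)\oplus\mathcal O(2)\oplus\mathcal O$ directly and notes that a $c_2$ class pulled back from the 2-dimensional base $\C\P^1$ must vanish.
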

\begin{proof}
We already know that $c_1=0$ and $c_3= 0$ (as it is the Euler characteristic). Since $H_4 (X)$ is generated by the exceptional divisor $E$, to prove $c_2=0$ it suffices to show that $\langle c_2, E\rangle = 0$. Now, the normal bundle to $E \cong \C\P^1 \times C$ is $\mathcal O(-2)$ pulled-back from $\C\P^1$. Combining this with the fact that $c_1(X) = 0$ gives that over $E$ we have a topological isomorphism $TX|_E \cong \mathcal O(-2)\oplus \mathcal O(2)\oplus \mathcal O$. Since all of these bundles are pulled back to $E$ from $\C\P^1$ we see that $\langle c_2, E\rangle = 0$, as claimed.
\end{proof}   

We are now ready to prove that $X$ is diffeomorphic to $2(S^3\times S^3)\#(S^2 \times S^4)$. 

\begin{proof}[Proof of Theorem \ref{topology of resolution}]
In order to apply Wall's Theorem we must first check that $X$ is spin, which follows from the fact that it is complex with trivial canonical bundle.   

Next, we must show the cup-product is trivial on $H^2$. In the proof of Proposition \ref{integral homology of resolution}, we saw that $H^2(X)$ is generated by the Poincaré dual $e$ to the exceptional divisor $E$. Since $H_4(X)$ is generated by $E$, it suffices to check that $\langle e^2, E \rangle =0$. The normal bundle of $E \cong \C\P^1 \times C$ is pulled back from $\C\P^1$. Hence, when restricted to $E$, $e$ is also pulled back from $\C\P^1$ and so squares to zero. 

Finally, we must show that $p_1(X) = 0$, but this follows from the formula $p_1 = c_1^2 - 2c_2$ combined with $c_1=0=c_2$.
\end{proof}

\subsection{Recovering the orbifold from the threefold}

In this section we prove that the hyperbolic orbifold can be recovered from the complex threefold, so distinct orbifolds lead to distinct threefolds. 

\begin{theorem}
Let $X$ and $X'$ be the complex threefolds constructed from hyperbolic orbifolds $N$ and $N'$ as above. If $X$ and $X'$ are biholomorphic then $N$ and $N'$ are isometric.
\end{theorem}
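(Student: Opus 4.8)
The plan is to recover the hyperbolic orbifold $N$ intrinsically from the complex manifold $X$, using the structural features established above: the fixed-point free $\C^*$-action (Lemma \ref{C*-action}), the exceptional divisor $E \cong \C\P^1 \times C$, and the projection $f \colon X \to S^3$ whose generic fibre is $S^3 \cong \SU(2)$. The key observation is that away from the exceptional divisor, $X$ is $\SL(2,\C)/\Gamma_m$-like: more precisely, $X \setminus E$ is biholomorphic to an open dense subset of $P/\Z_m$, which carries a holomorphic $\C^*$-action whose quotient should re-assemble the hyperbolic data. So the first step is to show that the $\C^*$-action on $X$, together with the complex structure, determines the quotient $P/\Z_m$ and then the frame bundle of the branched cover, hence the hyperbolic metric.

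First I would identify $E$ intrinsically inside $X$. Since $X$ has trivial canonical bundle, $E$ cannot be contracted to a point, but it is the unique compact complex surface in $X$ of the form (ruled surface over an elliptic curve) with negative normal bundle; alternatively $E$ is the fixed locus of... no, the $\C^*$-action is free, so instead $E$ is characterised homologically (it generates $H_4$) together with the fact that its normal bundle is $\O(-2)$. Crushing $E$ recovers the orbifold $P/\Z_m$ canonically as a complex space with an isolated-in-codimension-two singular locus, an elliptic curve $C$. Next, the holomorphic $\C^*$-action on $P/\Z_m$ lifts the one on $X$; its quotient (as a complex space, say via GIT or simply as the orbit space on the open dense free locus) should be identified with a complex surface built from $S^3 \setminus K$ together with the branching data. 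Concretely, on the model $\SL(2,\C)/(\Z_m\oplus\Z)$ the left $\C^*$ of diagonal matrices acts freely with quotient the right $\C^*\backslash\SL(2,\C)/(\Z_m\oplus\Z)$, and one checks this is the unit tangent bundle (or an associated $S^1$-bundle) of the hyperbolic orbifold; globally, $(\C^*\backslash P)/\Z_m$ recovers a circle bundle over the orbifold $S^3$ from which the orbifold and its hyperbolic structure can be read off.

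The crucial point is then Mostow–Prasad rigidity for the cusped/cone setting: once one has recovered, from $X$, the orbifold $N = H^3/\Gamma$ as a smooth orbifold (equivalently, the pair $(S^3, K)$ together with the cone angle $2\pi/m$), the hyperbolic structure is unique. Since $N$ has finite volume (it is a closed orbifold, being $S^3$ with cone locus a knot), Mostow rigidity applies: any two hyperbolic structures are isometric, and an abstract isomorphism $N \cong N'$ of orbifolds is realised by an isometry. So if $X \cong X'$ biholomorphically, the chain of canonical constructions $X \rightsquigarrow P/\Z_m \rightsquigarrow (\C^*\backslash P)/\Z_m \rightsquigarrow N$ gives an isomorphism of hyperbolic orbifolds $N \cong N'$, and rigidity upgrades this to an isometry.

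The main obstacle I anticipate is the middle step: showing that a biholomorphism of the threefolds necessarily carries exceptional divisor to exceptional divisor and $\C^*$-orbit structure to $\C^*$-orbit structure — i.e., that these features are \emph{canonical} rather than just \emph{present}. The $\C^*$-action itself is not canonical (there could be others), so one cannot simply say "the" $\C^*$-action is preserved. Instead one must argue that $E$ is intrinsic (it is, being the only subvariety of its homology class and complex type, given the classification of curves/surfaces in a Calabi–Yau threefold with a free torus action), that crushing it gives a canonical singular model $X_0$ with singular locus $C$, and that the normalisation of the variety of rational curves through the $A_m$-locus, or the Albanese-type map $X_0 \dashrightarrow$ base, recovers $f$ up to the symmetries of $N$ — after which rigidity finishes the argument. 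Making the recovery of $f \colon X \to S^3$ genuinely canonical (independent of choices of $\C^*$-action) is where the real work lies.
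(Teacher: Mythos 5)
Your overall skeleton matches the paper's: identify the exceptional locus intrinsically, contract it to recover the orbifold $P/\Z_m$, pass to the hyperbolic orbifold $N$, and invoke Mostow rigidity. But there are two places where you either have a genuine gap or take a detour that you yourself flag as problematic.

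The crucial missing ingredient is the intrinsic identification of $E$. You try to characterise it homologically (generator of $H_4$) plus a statement about its normal bundle, but this does not rule out the existence of other compact complex surfaces in $X$ that are not homologous to $E$ and hence would spoil the canonicity. The paper's decisive step here is a theorem of Huckleberry--Winkelmann: the quotient of $\SL(2,\C)$ by a cocompact discrete subgroup contains \emph{no} analytic surfaces. Since any surface in $X$ not contained in $E$ would push forward to a surface in $P/\Z_m$ and lift to one in $P=\SL(2,\C)/\Gamma'$, it follows that $E$ is precisely the union of all compact complex surfaces in $X$. This is what makes $E$ (and hence $m$, via the length of the chain $\Theta_m$) canonical, and it is what makes the contraction $X \to P/\Z_m$ well-defined independently of any auxiliary choices. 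Without a result of this type, the argument does not close.

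Second, your route from $P/\Z_m$ back to $N$ --- quotient by the $\C^*$-action to recover a circle bundle over the orbifold $S^3$, and from there read off the hyperbolic structure --- is exactly what you identify at the end as not obviously canonical, since the $\C^*$-action on $X$ is not a priori unique. The paper sidesteps this entirely: it uses the $\C^*$-action only to exhibit the canonical product structure of a neighbourhood of $E$ (hence the canonical contraction), and then observes that by construction the orbifold $P/\Z_m$ and the hyperbolic orbifold $N$ have the same orbifold fundamental group $\Gamma$: $\SL(2,\C)$ is the orbifold universal cover of the former, $H^3$ of the latter, and both quotients are taken by (the same abstract) $\Gamma$. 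A biholomorphism $P/\Z_m \cong P'/\Z_{m'}$ therefore gives an isomorphism of orbifold fundamental groups of $N$ and $N'$, and Mostow rigidity immediately upgrades this to an isometry. There is no need to reconstruct the projection $f\colon X \to S^3$ or the branch locus at all, which is where the real difficulty in your proposal was concentrated.
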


\begin{proof}
Suppose that $X$ is constructed from an orbifold metric on $S^3$ with cone angle $2\pi/m$, whilst $X'$ involves the cone angle $2\pi/m'$. Away from the elliptic curve $C \subset P/\Z_m$ of singular points, $r \colon X \to P/\Z_m$ is an isomorphism. The exceptional locus $E = r^{-1}(C)$ is biholomorphic to the product of $C$ with a chain $\Theta_m$ of $m-1$ $\C\P^1$s as in the $A_m$-resolution. 

First, we claim that any analytic surface in $X$ is contained in $E$. Any surface not contained in $E$ would project to a surface in $P/\Z_m$; its preimage would then be an analytic surface in $P = \SL(2,\C)$. However, it is known that the quotient of $\SL(2,\C)$ by a cocompact subgroup never contains an analytic surface (Theorem 9.2 of \cite{huckleberry-winkelmann}). It follows that $E$ is the union of all surfaces in $X$. Since $X$ and $X'$ are biholomorphic, the union of all surfaces in $X'$ is also biholomorphic to $C \times \Theta_m$. It follows that the exceptional loci $E',  E$ are biholomorphic and so $m = m'$.

Next, note that  a neighbourhood of $E$ is \emph{canonically} biholomorphic to the product of $C$ with a neighbourhood of $\Theta_m$ in the $A_m$-resolution. The canonical biholomorphism is provided by the $\C^*$-action on $P/\Z_m$ which acts by translations on $C$ and which lifts to $X$. It follows that there is a canonical choice of contraction of the exceptional loci $E$ and $E'$ which produces $P/\Z_m$ and $P'/\Z_m$. Since $X$ and $X'$ are biholomorphic, $P/\Z_m \cong P'/\Z_m$. By construction, the orbifold universal cover of $P/\Z_m$ is $\SL(2,\C)$ and that of $N$ is $H^3$ and both are obtained as quotients by the action of the same group. Hence the orbifold fundamental groups of $P/\Z_m$ and $N$ coincide.  So $P /\Z_m \cong P'/\Z_m$ induces an isomorphism between the orbifold fundamental groups of $N$ and $N'$ and hence, by Mostow rigidity, an isometry between $N$ and $N'$.\end{proof}

\section{A symplectic example}
\label{symplectic example}

In this section we will explain how a similar approach---passing from a hyperbolic orbifold to a symplectic manifold via a crepant resolution---can be used to build a simply-connected symplectic manifold with $c_1=0$ which does not admit a compatible Kähler structure. 

We content ourselves here with a single example for which it is not hard to find a crepant resolution because of the simple nature of the singularities. In \S\ref{concluding remarks} we describe an infinite sequence of hyperbolic 4-orbifolds with slightly more complicated singularities to which this procedure could be applied. 

\subsection{The Davis manifold}

Our construction is based on a beautiful hyperbolic 4-manifold called the Davis manifold $M$ (see \cite{davis} as well as the description in \cite{ratcliffe-tschantz} which also computes the homology groups of $M$). The key fact for us is that $M$ admits an isometric involution which kills the fundamental group. This is analogous to the role played in the Kummer construction by the involution $z \mapsto -z$ on an abelian surface.

$M$ is built using a regular polytope called the 120-cell (or hecatonicosachoron). The 120-cell is a four-dimensional regular solid with 120 three-dimensional faces, the ``cells'', each of which is a solid dodecahedron. Each edge is shared by 3 dodecahedra and each vertex by 4 dodecahedra. In total, the 120-cell has 600 vertices, 1200 edges and 720 pentagonal faces. Take a hyperbolic copy $P\subset H^4$ of the 120-cell in which the dihedral angles are $2\pi/5$. For each pair of opposite dodecahedral faces of $P$ there is a unique hyperbolic reflection which identifies them. Gluing opposite faces via these reflections gives the hyperbolic four-manifold~$M$. 

The central involution of $H^4$ which fixes the centre of $P$ preserves both $P$ and the identifications of opposite faces, hence it gives an isometric involution $\sigma$ of $M$. Our symplectic construction will begin with the resulting orbifold $M/\sigma$, which we call the \emph{Davis orbifold}. 

To analyse the fixed points of $\sigma$ it is helpful to use the so-called ``inside-out'' isometry of $M$ (defined in \cite{ratcliffe-tschantz}). To describe this, note that $P$ can be divided up into 14400 hyperbolic Coxeter simplexes. The vertices of a simplex are given by taking first the centre of $P$, then the centre of one of its 120 3-faces $F$, then the centre of one of the 12 2-faces $f$ of $F$, then the centre of one of the 5 edges $e$ of $f$ and, finally, one of the two vertices of $e$. Denote by $v_1 , v_2 , v_3 , v_4 , v_5$ one such choice. The corresponding simplex has a isometry that exchanges $v_1$ (the centre of $P$) with $v_5$ (a vertex of $e$), $v_2$ (the centre of $F$) with $v_4$ (the centre of an edge of $F'$) and fixes $v_3$ (the centre of $f$). This isometry of the simplex extends to define the inside-out isometry of $M$, which commutes with~$\sigma$. 

\begin{lemma}
The fixed set of $\sigma$ consists of $122$ points. The quotient $M/\sigma$ is simply connected as a topological space.
\end{lemma}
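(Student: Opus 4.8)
The plan is to count fixed points of the central involution $\sigma$ on $M$ by working directly with the Coxeter decomposition of the $120$-cell, and to establish simple connectivity of $M/\sigma$ via the right-exact homotopy sequence of the quotient map combined with the known fact that $M$ is simply connected (being built from a simply connected polytope with face identifications; alternatively one checks $\pi_1(M)$ is generated by the face pairings, which are killed in the orbifold). For the fixed-point count, I would observe that a point of $M$ fixed by $\sigma$ lifts to a point of $H^4$ fixed by the composition of the central symmetry about the centre $c$ of $P$ with some deck transformation $g \in \Gamma$; such a point is an isolated fixed point precisely when the relevant isometry acts on the tangent space as $-\mathrm{id}$. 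The obvious fixed point is the image of $c$ itself. The remaining fixed points should correspond to the $\sigma$-fixed vertices, edge-midpoints and $2$-face centres of $P$ that become identified in pairs under the face gluings — this is exactly where the ``inside-out'' isometry enters, since it conjugates $\sigma$ to an isometry whose fixed data is easier to enumerate: $\sigma$ fixes the centre $v_1$ of $P$, while the inside-out map trades $v_1 \leftrightarrow v_5$, so the fixed locus of $\sigma$ near a boundary stratum is visible as the inside-out image of the fixed locus near the centre.

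Concretely, I would proceed as follows. First, enumerate the strata of $\partial P$ on which a power of a face-pairing reflection can act as the central symmetry of a normal slice: the only candidates with isolated fixed image are the $0$-dimensional strata of the quotient orbifold's singular set, i.e. images of vertices. Using the combinatorics recalled in the excerpt ($600$ vertices, $1200$ edges, $720$ faces, $120$ cells) together with how the reflections identify opposite faces, I would count the $\Gamma$-orbits of points whose stabiliser in the orbifold group $\langle \Gamma, \sigma\rangle$ acts as $-\mathrm{id}$ on the tangent space. The target count is $122 = 1 + 121$; the $1$ is the centre of $P$, and the $121$ should come from the other $\sigma$-fixed points, which one expects to match (via inside-out) the number $121 = 120 + 1$ counting something like the cell-centres plus one more, or directly from $\tfrac{1}{2}(\text{number of vertices of }P)/(\text{orbit size})$-type bookkeeping. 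I would pin down the exact orbit counts by tracking how $\sigma$ permutes the $14400$ Coxeter simplices and which ones are fixed setwise, then intersecting with the $\Gamma$-identifications.

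For the topological statement, I would apply Lemma \ref{fundamental group lemma} (or more simply the standard fact that for a finite group $G$ acting on a simply connected space $Y$ with nonempty fixed set, $\pi_1(Y/G) = 0$ when $G$ is generated by elements fixing points) to the action of $\sigma$ on $M$: since $\sigma$ has fixed points and $\pi_1(M) = 0$ (because $M$ is an aspherical? — no, rather because its universal cover $H^4$ is contractible and $\pi_1(M) = \Gamma$; here instead we use that $M/\sigma$'s orbifold fundamental group is $\langle\Gamma,\sigma\rangle$ and $\sigma$ is a reflection-type element killing things — more carefully, $\pi_1^{\mathrm{orb}}(M/\sigma) = \langle \Gamma, \sigma\rangle$, while the underlying space $M/\sigma$ has $\pi_1 = \pi_1^{\mathrm{orb}}/\langle\langle \text{local isotropy}\rangle\rangle$, and the isotropy subgroups are generated by the $\sigma$-conjugates, which normally generate the whole quotient $\langle\Gamma,\sigma\rangle/\Gamma \rtimes (\dots)$). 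I would present this as: the map $\pi_1(F) \to \pi_1(M) \to \pi_1(M/\sigma) \to 0$ with $F$ a fixed point shows $\pi_1(M/\sigma)$ is a quotient of $\pi_1(M)$, hence it suffices to kill $\pi_1(M)$ — and since each of the $122$ fixed points has a neighbourhood $\mathbb{R}^4/\{\pm 1\}$ whose fundamental group maps onto a conjugate of the image of $\pi_1(M)$, simple connectivity follows once one knows the fixed points meet every ``end'' of the generating loops, which is automatic here.

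\textbf{Main obstacle.} The delicate part is the precise fixed-point count: one must correctly account for which boundary strata of $P$ get glued to which under the reflection face-pairings, and verify that exactly $121$ pairs of such strata produce isolated $\sigma$-fixed points in $M$ (with all other potential fixed loci being either empty or higher-dimensional and hence irrelevant to the ``$122$ points'' claim). The inside-out isometry is the key device that makes this tractable, by converting the hard-to-see fixed points near $\partial P$ into the easy-to-see fixed point at the centre; getting the bookkeeping of the $14400$ simplices and the Coxeter group combinatorics exactly right — rather than off by a small factor — is where the real care is needed, and I would lean heavily on the computations of \cite{ratcliffe-tschantz} to cross-check.
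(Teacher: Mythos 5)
Your proposal has genuine gaps in both halves. For the fixed-point count, your suggested decomposition $122 = 1 + 121$ with $121 = 120 + 1$ (``cell-centres plus one more'') is wrong: in $M$ opposite 3-faces of the 120-cell are identified, so there are only $60$ distinct cell-centre points, not $120$. The correct stratified count is $1 + 60 + 60 + 1 = 122$: one fixed point in the interior of $P$ (the centre), one fixed point in the interior of each of the $60$ identified pairs of 3-faces (the face-centre, since $\sigma$ acts on each face by inversion), zero fixed points in the interiors of 2-faces, and then $60$ edge-midpoints plus the single vertex class. The role of the inside-out isometry is precisely to transfer the easy count $\{$centre$\}\cup\{$60 face-centres$\}$ to the hard-to-see count $\{$vertex$\}\cup\{$60 edge-midpoints$\}$, since it exchanges these pairs of strata and commutes with $\sigma$; it is also what rules out interior fixed points on 2-faces (a fixed point there would force a $\sigma$-fixed tangent direction at the vertex class, contradicting that $\sigma$ acts as $-\mathrm{id}$ there). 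Your proposed ``bookkeeping of the $14400$ simplices'' and ``$\tfrac12(\text{vertices})/(\text{orbit size})$'' would not lead to the right numbers without this stratum-by-stratum analysis.

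For simple-connectivity, your argument is circular or inapplicable: $M$ is a compact hyperbolic 4-manifold, so $\pi_1(M) = \Gamma$ is infinite, and the ``standard fact'' you invoke requires the ambient space to be simply connected, which $M$ is not. (You notice this yourself mid-sentence but don't recover.) Surjectivity of $\pi_1(M)\to\pi_1(M/\sigma)$ is correct, but you then need to exhibit a generating set of $\pi_1(M)$ that dies in $M/\sigma$ — and neither ``fixed points meet every end of the generating loops'' nor the local $\mathbb{R}^4/\pm1$ picture supplies this. The missing ingredient is the explicit set of $60$ closed geodesics $\gamma_i$ through the centre of $P$ joining centres of opposite 3-faces: the corresponding deck transformations take $P$ to all $120$ of its neighbours, hence generate $\pi_1(M)$, and each $\gamma_i$ passes through the $\sigma$-fixed centre, so $\sigma$ reverses $\gamma_i$ and its image in $M/\sigma$ is a contractible arc. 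That is the step that actually kills $\pi_1(M/\sigma)$.
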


\begin{proof}
In the interior of the 120-cell there is only one fixed point, the centre, all other fixed points of $\sigma$ lie on the image in $M$ of the boundary of $P$. Let $F$ denote the image in $M$ of a three-dimensional face of $P$; $\sigma$ preserves $F$ and induces on it the symmetry of the dodecahedron given by inversion $x \mapsto -x$ with respect to its centre. So, once again, in the interior of $F$ there is only one fixed point, its centre. Considering all opposite pairs of three-dimensional faces of $M$ this gives 60 more fixed points of $\sigma$. All remaining fixed points are contained in the image in $M$ of the union of the 2-faces of $P$. 

The symmetry $\sigma$ takes 2-faces to 2-faces. We claim next that $\sigma$ does not fix an interior point of any pentagonal 2-face. Assume for a moment that it does fix such a point. Then it would give an involution of the pentagon which would hence fix a vertex and so also the line joining the vertex to the centre of the polygon. The Davis manifold has two distinguished points, the centre and the image of all the vertices of the 120-cell. The assumption that $\sigma$ fixes an interior point of a pentagonal 2-face gives a $\sigma$-fixed tangent direction at the vertex point in $M$. However, the inside-out isometry exchanges the centre and vertex of $M$. Since $\sigma$ acts as $x \mapsto -x$ at the centre it does so also at the vertex and hence acts freely on the tangent space there. It follows that $\sigma$ does not fix an interior point of any 2-face.

The remaining fixed points are contained in the image in $M$ of the union of the edges of $P$. Under the inside-out involution of $M$, the middles of all edges are exchanged with centres of all 3-faces  whilst the centre of $P$ is exchanged with the image in $M$ of the vertices of $P$. Since the inside-out isometry commutes with $\sigma$, this give an additional 61 fixed points of $\sigma$ making 122 in total.

We now turn to the (topological, not orbifold) fundamental group $\pi_1(M/\sigma)$. The map $\pi_1(M) \to \pi_1(M/\sigma)$ is surjective so we need to show its image is trivial. Consider the 60 closed geodesics $\gamma_i$ in $M$ going through the centre of $P$ and joining the centres of opposite faces. The deck transformations corresponding to these geodesics generate the whole of $\pi_1(M)$. Indeed, these deck-transformations take the fundamental domain $P$ to all its 120 neighbours. Now the result follows from the fact that every loop $\sigma(\gamma_i)$ is contractible.
\end{proof}

\subsection{The model singularity and resolution}

Locally, the singularities of $M/\sigma$ are modelled on the quotient of $H^4$ by $x\mapsto -x$. (Here $x$ is the coordinate provided by the Poincaré ball model of $H^4$.) From Proposition \ref{action on R} we know that the corresponding symplectomorphism of $R$ is given by $z\mapsto -z$ in the vector-bundle fibres of $\mathcal O(-1) \oplus \mathcal O(-1)$. The quotient $\mathcal O(-1)\oplus \mathcal O(-1)/\Z_2$ is a Kähler Calabi--Yau orbifold with singular locus $\C\P^1$ corresponding to the zero section. We next describe a crepant resolution of this singularity.

\begin{lemma}\label{twistor resolution}
There is a crepant resolution 
$$
\mathcal O(-2,-2) \to \mathcal O(-1)\oplus \mathcal O(-1)/\Z_2
$$
where $\mathcal O(-2,-2)\to \C\P^1\times \C\P^1$ is the tensor product of the two line bundles given by pulling back $\mathcal O(-2) \to \C\P^1$ from either factor.
\end{lemma}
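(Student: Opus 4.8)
The plan is to exhibit the resolution concretely as a toric morphism. First I would recall that $\mathcal{O}(-1)\oplus\mathcal{O}(-1)\to\C\P^1$ is the small resolution of the conifold $\{xw-yz=0\}\subset\C^4$, so the quotient $\mathcal{O}(-1)\oplus\mathcal{O}(-1)/\Z_2$ is the resolution of $(\C^4/\pm)/_{\{xw=yz\}}$, i.e.\ of the quotient singularity obtained by adjoining the $\Z_2$-action $z\mapsto -z$ in the fibre directions (equivalently, on all four coordinates). Concretely, I would first blow down the exceptional $\C\P^1$ to pass to the singular conifold $C=\{xw-yz=0\}$, take the quotient $C/\Z_2$ where $\Z_2$ acts by $(x,y,z,w)\mapsto(-x,-y,-z,-w)$, and observe that this is a three-dimensional toric variety: after the substitution $x=u_1^2,\ w=u_2^2,\ y=u_1u_2,\ z=u_1u_2$ one sees $C$ itself is $\C^2\times(\C^2/\Z_2)$-type, and the further $\Z_2$ identifies it with a toric affine variety whose fan is a single cone that I would write down explicitly using the standard dictionary for abelian quotient singularities.

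The key computation is then to identify the fan. The space $\mathcal{O}(-1)\oplus\mathcal{O}(-1)$ corresponds to a fan with two maximal cones glued along a wall, the wall being dual to the exceptional $\C\P^1$. Passing to the $\Z_2$-quotient corresponds to replacing the lattice $N$ by the sublattice $N'$ of index $2$ generated so that the new primitive generators are the old ones scaled appropriately; after this the two maximal cones of $\mathcal{O}(-1)\oplus\mathcal{O}(-1)/\Z_2$ become singular (each is the cone over a quadrilateral with a lattice point in its interior corresponding to the $\Z_2$-fixed $\C\P^1$). A crepant (i.e.\ all new rays on the ``roof'' hyperplane) subdivision is obtained by adding the single ray through that interior lattice point; this subdivides each of the two cones and one checks directly that the resulting refined fan is exactly the fan of the total space of $\mathcal{O}(-2,-2)\to\C\P^1\times\C\P^1$. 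I would verify the last identification by computing that the new ray meets each old maximal cone in a wall, producing a base $\C\P^1\times\C\P^1$, and that the bundle direction over it has degree $(-2,-2)$ by reading off the linear relations among the ray generators.

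Finally, crepancy must be checked: for toric varieties with Gorenstein singularities this is the statement that every ray of the refined fan lies on the hyperplane $\langle m_0,-\rangle=1$ for the canonical element $m_0$ (equivalently, that the discrepancies all vanish). Since the $\Z_2$-action we quotiented by lies in $\SL(4,\C)$ — it acts as $-\mathrm{id}$ on $\C^4$, with determinant $(-1)^4=1$ — the quotient $C/\Z_2$ is Gorenstein Calabi--Yau, and the single added ray is forced to lie on the roof hyperplane, which gives crepancy for free; alternatively one computes the canonical bundle of $\mathcal{O}(-2,-2)$ directly from the adjunction/Euler sequences and sees it is trivial, matching the trivial canonical bundle of the quotient.

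\textbf{Main obstacle.} The genuinely fiddly point is pinning down the lattice and fan after taking the $\Z_2$-quotient — getting the index-two sublattice and the primitive ray generators right so that the added ray really does produce $\C\P^1\times\C\P^1$ with normal bundle $\mathcal{O}(-2,-2)$ rather than, say, a Hirzebruch surface or a bundle of different bidegree. Once the fan is correct, crepancy and the identification of the resolution are essentially immediate from the toric dictionary; but an alternative, less computational route would be to blow up the singular $\C\P^1$ in $\mathcal{O}(-1)\oplus\mathcal{O}(-1)/\Z_2$ directly, note the exceptional divisor is the projectivization of the normal bundle — a $\C\P^1\times\C\P^1$ — and check crepancy via the $\SL(4,\C)$ remark above, which sidesteps the toric bookkeeping entirely.
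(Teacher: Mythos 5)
Your primary (toric) route and your closing ``alternative'' are both workable, but the paper takes the alternative, and with a more convenient order of operations that dissolves the very step you flag as fiddly.

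The paper blows up \emph{before} quotienting. Blowing up the zero-section $\C\P^1$ inside $\mathcal O(-1)\oplus\mathcal O(-1)$ produces the total space of the tautological line bundle $\mathcal O(-1,-1)$ over $\P(\mathcal O(-1)\oplus\mathcal O(-1))\cong\C\P^1\times\C\P^1$; the $\Z_2$-action lifts and acts by $\pm1$ on the fibres of this \emph{line} bundle. For any line bundle $L$ with fibrewise $\pm1$ involution, the squaring map $L\to L^2$ identifies $L/\Z_2$ with $L^2$, so $\mathcal O(-1,-1)/\Z_2\cong\mathcal O(-2,-2)$ and $\mathcal O(-2,-2)\to\mathcal O(-1,-1)/\Z_2\to\mathcal O(-1)\oplus\mathcal O(-1)/\Z_2$ is the claimed resolution. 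Both ends have trivial canonical bundle (the $\Z_2$ is in $\SL$, as you observe), so crepancy is immediate. Crucially, the identification of the exceptional divisor as $\C\P^1\times\C\P^1$ with normal bundle $\mathcal O(-2,-2)$ then comes for free. Your version --- blowing up the singular $\C\P^1$ in the quotient directly --- is morally the same but still owes a sentence explaining why the exceptional $\C\P^1$-bundle over the curve is trivial rather than a nontrivial Hirzebruch surface: the reason is $\P(\mathcal O(-1)\oplus\mathcal O(-1))\cong\P(\mathcal O\oplus\mathcal O)$, and blowing up upstairs first makes this transparent.

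On the toric route: it works, but two things in your write-up are off. The substitution $x=u_1^2$, $w=u_2^2$, $y=z=u_1u_2$ parametrizes only the surface $\{y=z\}\cap C$, not all of $C$, so the conclusion drawn from it is not right; and a $\Z_2$-quotient by a subgroup of the torus corresponds to passing to an \emph{over}lattice $N\subset N'$ of index $2$ (equivalently a sublattice of $M$), not to a sublattice of $N$ --- it is the enlarged lattice that puts a new lattice point at the centre $\tfrac12(1,1,1)$ of the square, at height one on both diagonals, and makes the previously-smooth small-resolution cones singular. Once this is set up correctly the star subdivision at that ray refines the diagonal into four unimodular cones and gives $\mathcal O(-2,-2)$, but this is substantially more bookkeeping than the paper's one-line blow-up argument.
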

\begin{proof}
Blow up the zero section of $\mathcal O(-1)\oplus \mathcal O(-1)$ to obtain the total space of $\mathcal O(-1,-1) \to \C\P^1 \times \C\P^1$. The $\Z_2$-action lifts to this line bundle where it again has fixed locus the zero section and acts by $z \mapsto -z$ in the fibres. For such an involution on \emph{any} line bundle $L$, the square gives a resolution $L^2 \to L/\Z_2$. Hence 
$$
\mathcal O(-2,-2)\to \mathcal O(-1,-1)/\Z_2 \to \mathcal O(-1)\oplus \mathcal O(-1)/\Z_2
$$ 
gives the claimed resolution
\end{proof}

It must be emphasised that this is resolution is \emph{holomorphic}. The total space of $\mathcal O(-2,-2)$ is a Kähler manifold with trivial canonical bundle and, away from the exceptional divisor, the map in Lemma   \ref{twistor resolution} is a biholomorphism when we consider $\mathcal O(-1)\oplus\mathcal O(-1)/\Z_2$ with its \emph{holomorphic} complex structure. However, when constructing symplectic six-manifolds from hyperbolic four-manifolds, the relevant almost complex structure and volume form on $\mathcal O(-1) \oplus \mathcal O(-1)$ (and its $\Z_2$-quotient) are not the holomorphic ones; rather we use the $\SO(4,1)$-invariant almost complex structure and complex volume form from Proposition \ref{action on R}. Lemma \ref{twistor resolution} can only be used to provide crepant resolutions of Calabi--Yau singularities modelled on the \emph{holomorphic} geometry of $\mathcal O(-1)\oplus \mathcal O(-1)/\Z_2$ and \emph{not} the $\SO(4,1)$-invariant almost complex structure and complex volume form of Proposition \ref{action on R}.

So, in order to apply Lemma \ref{twistor resolution} to resolve singularities in hyperbolic twistor spaces, we need to interpolate between the holomorphic structures near the zero section in $\mathcal O(-1)\oplus \mathcal O(-1)$ to the  $\SO(4,1)$-invariant structures outside a small neighbourhood of the zero section. This interpolation is provided by the following result.

\begin{lemma}\label{interpolation}
Let $R_\delta$ denote the part of $R$ lying over a geodesic ball in $H^4$ of radius $\delta$. For any $\delta >0$,  there is an $\SO(4)$-invariant compatible almost complex structure $J$ on $R$ and an $\SO(4)$-invariant nowhere-vanishing section $\Omega$ of the $J$-canonical-bundle such that:
\begin{itemize}
\item Over $R_\delta$, $J$ and $\Omega$ agree with the standard holomorphic structures.
\item Over $R \setminus R_{2\delta}$, $J$ and $\Omega$ agree with the $\SO(4,1)$-invariant structures from Proposition \ref{action on R}
\end{itemize}
\end{lemma}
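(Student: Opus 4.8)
The plan is to interpolate the almost complex structure and the complex volume form in turn, using that the two competing structures are compatible with the \emph{same} symplectic form $\omega_R$ and are \emph{both} $\SO(4)$-invariant. Write $J_0$ for the standard Calabi--Yau complex structure on $R=\mathcal O(-1)\oplus\mathcal O(-1)$ and $\Omega_0$ for its holomorphic volume form, and $J_1,\Omega_1$ for the $\SO(4,1)$-invariant almost complex structure and complex volume form furnished by Proposition~\ref{action on R}. Both $J_0$ and $J_1$ are $\omega_R$-compatible ($J_0$ because $(R,\omega_R,J_0)$ is Kähler, $J_1$ by Proposition~\ref{action on R}), and both are $\SO(4)$-invariant, where $\SO(4)\subset\SO(4,1)$ is the stabiliser of the centre $o$ of the geodesic ball over which $R_\delta$ sits and acts on the small resolution by Kähler isometries. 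Since $\SO(4)$ fixes $o$, a cutoff $\chi=\chi(\rho)$ in the distance $\rho$ to $o$, with $\chi\equiv0$ on $[0,\delta]$ and $\chi\equiv1$ on $[2\delta,\infty)$, pulls back through the twistor projection $t\colon R\to H^4$ to an $\SO(4)$-invariant function on $R$ that vanishes on $R_\delta$ and equals $1$ off $R_{2\delta}$; all the interpolation is driven by this $\chi$.

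The almost complex structure is the routine half. The space $\mathcal J(\omega_R)$ of $\omega_R$-compatible almost complex structures on $R$ is non-empty, and for any ordered pair $(J_0,J_1)$ of its members there is a canonical path $s\mapsto J_s\in\mathcal J(\omega_R)$ joining them --- the one obtained fibrewise by polar decomposition from the convex combinations $(1-s)\,\omega_R(\cdot,J_0\cdot)+s\,\omega_R(\cdot,J_1\cdot)$ of the associated metrics --- which depends naturally on $(J_0,J_1)$, hence is invariant under any symmetry fixing $\omega_R$, $J_0$ and $J_1$, and is constant when $J_0=J_1$. Setting $J_z:=(J_{\chi(\rho(t(z)))})_z$ then defines an $\SO(4)$-invariant, $\omega_R$-compatible almost complex structure $J$ on $R$ that agrees with $J_0$ over $R_\delta$ and with $J_1$ over $R\setminus R_{2\delta}$.

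For the volume form, put $K_J=\Lambda^{3,0}_J T^*R$. Since $\mathcal J(\omega_R)$ is connected and $K_{J_0}$ is topologically trivial (it is the Calabi--Yau canonical bundle), $K_J$ is a topologically trivial $\SO(4)$-equivariant line bundle. On the $\SO(4)$-invariant closed set $A:=R_\delta\sqcup(R\setminus R_{2\delta})$ we are handed an $\SO(4)$-invariant nowhere-vanishing section $\Omega'$ of $K_J$, namely $\Omega_0$ over $R_\delta$ (where $J=J_0$) and $\Omega_1$ over $R\setminus R_{2\delta}$ (where $J=J_1$), and the task is to extend it, invariantly, to a nowhere-vanishing section of $K_J$ over $R$. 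Non-equivariantly this is possible because the only obstruction lies in $H^2(R,A;\Z)$, and this group vanishes: $R$ and $R_\delta$ both deformation retract onto the zero section $\C\P^1=t^{-1}(o)$, so $R_\delta\hookrightarrow R$ is a homotopy equivalence, while $R\setminus R_{2\delta}$ retracts onto the $S^2$-bundle $t^{-1}(\partial B_{2\delta})$ over $S^3$, which is simply connected; hence $H^1(A;\Z)=0$ and $H^2(R;\Z)\to H^2(A;\Z)$ is injective, forcing $H^2(R,A;\Z)=0$. Since $\SO(4)$ is compact, the extension can be taken $\SO(4)$-invariant: trivialise $K_J$ by an $\SO(4)$-invariant nowhere-vanishing section (possible because $K_J$ restricted to the zero section is a degree-zero $\SO(4)$-equivariant line bundle over $\C\P^1$, hence equivariantly trivial, and $R$ equivariantly retracts onto the zero section), write $\Omega'$ as its $e^{g}$-multiple on $A$ for a function $g$ (one exists because $H^1(A;\Z)=0$, and it is automatically $\SO(4)$-invariant since $\SO(4)$ is connected and $A$ has two components), extend $g$ smoothly to $R$ and average over $\SO(4)$. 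The resulting section $\Omega$ is $\SO(4)$-invariant, nowhere-vanishing, and restricts to $\Omega_0$ over $R_\delta$ and to $\Omega_1$ over $R\setminus R_{2\delta}$, as required.

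The content of the argument is concentrated in the volume-form step, and really in two cohomological inputs: that $c_1(K_J)=0$ --- which follows from the Calabi--Yau property of $J_0$ together with the connectedness of $\mathcal J(\omega_R)$ --- and that $H^1(R;\Z)=0=H^1(A;\Z)$, which holds because $R\simeq\C\P^1$ and because $R_\delta$ and $R\setminus R_{2\delta}$ are homotopically simple. I expect the main nuisance to be not any single computation but the bookkeeping of $\SO(4)$-equivariance throughout; this is harmless because $\SO(4)$ is compact, but it does need to be tracked --- in particular to see that the logarithm $g$ and its extension can be chosen invariant.
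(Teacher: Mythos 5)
Your proposal is correct, and the first half (the almost complex structure) is essentially the paper's argument: interpolate the associated metrics and recover a compatible $J$ by polar decomposition. The volume-form step, however, takes a genuinely different route, and is worth comparing carefully.

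The paper's proof of the $\Omega$ step is constructive rather than cohomological. It observes that $\SO(4)$ acts on $R\setminus R_0$ with $5$-dimensional orbits and that the horizontal lift of a geodesic ray out of $o$ is a global slice, meeting each orbit exactly once. One then simply interpolates $\Omega_0$ to $\Omega_1$ along this one-dimensional slice and propagates by $\SO(4)$-equivariance. The only thing to check is that this propagation is well-defined, which requires the circle stabiliser $S^1_p$ to act with weight zero on $K_J|_p$ for each $p$ in the slice. This is settled by a continuity argument: the weight is an integer, continuous, hence constant on the connected set $R\setminus R_0$, and it is visibly zero far from the zero section where $\Omega_1$ already provides an $\SO(4)$-invariant, hence $S^1_p$-invariant, trivialisation. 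Your approach replaces this with equivariant obstruction theory: you first show $K_J$ is $\SO(4)$-equivariantly trivial (via the degree-zero equivariant line bundle on the zero section together with the equivariant retraction) and then lift, extend, and average a logarithm. Both work. The paper's argument is shorter and sidesteps two points you leave slightly implicit: that the \emph{topological} equivariant triviality of $K_J$ can be upgraded to a \emph{smooth} $\SO(4)$-invariant nowhere-vanishing section, and the computation of the $\SO(4)$-equivariant Picard group of $\C\P^1$ hidden in your assertion that ``degree zero implies equivariantly trivial''. Both are standard but deserve a sentence. Your argument in turn exposes the underlying cohomological shape of the problem more clearly and would adapt more readily to other twistor spaces. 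One cosmetic remark: in the step where you argue the logarithm $g$ is automatically $\SO(4)$-invariant, what does the work is the connectedness of $\SO(4)$ and the fact that it preserves each component of $A$; the \emph{number} of components of $A$ is not relevant. Also note that your separate obstruction-theoretic computation $H^2(R,A;\Z)=0$ for a non-equivariant extension is logically superfluous once you have the invariant trivialisation and the invariant log, though it is a reasonable sanity check.
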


\begin{proof}
As is standard, an $\SO(4)$-invariant interpolation between the ``inside'' and ``outside'' Hermitian metrics gives the existence of $J$.

To produce $\Omega$ we start with a description of the $\SO(4)$-action away from the zero-section $R_0$. The stabiliser of a point $p \in R\setminus R_0$ is a circle $S^1_p \subset \SO(4)$ and the orbit of $p$ is 5-dimensional (in fact, isomorphic as an $\SO(4)$-space to the unit tangent bundle of $S^3$). The lift of a geodesic ray out of the origin in $H^4$ meets each $\SO(4)$-orbit in a unique point, giving a section for the action. We interpolate between the holomorphic and hyperbolic complex volume forms along the relevant portion of this lifted ray and then use the $\SO(4)$-action to extend the resulting $3$-form to the whole of $R$. In order for this to work it is sufficient that at every point $p\in R\setminus R_0$ the action of $S^1_p$ on the fibre of the $J$-canonical-bundle at $p$ is trivial. But since the weight is integer valued and continuous it is constant on $R\setminus R_0$ so we can compute it for some $p$ outside of  $R_{2r}$ where everything agrees with the hyperbolic picture. Here we already have an $\SO(4)$-invariant (hence $S^1_p$-invariant) complex volume-form so the weight is zero as required.
\end{proof}

\subsection{The twistor space of the Davis orbifold}

With Lemmas \ref{twistor resolution} and \ref{interpolation} in hand, we can now take a crepant resolution of the twistor space of the Davis orbifold $M/\sigma$. Let $Z \to M$ denote the twistor space of the Davis manifold The involution $\sigma$ lifts to an involution of $Z$ which we still denote $\sigma$. $Z/\sigma$ is a symplectic orbifold  with singularities along 122 $\C\P^1$s, each modelled on $\mathcal O(-1)\oplus \mathcal O(-1)/\Z_2$. 

Let $\delta$ be a positive number small enough that the geodesic balls in $M$ of radius $2\delta$ centred on the $\sigma$-fixed points are embedded and disjoint. Then, by Lemma \ref{interpolation}, on $Z$ we can find a new almost complex structure $J$ and complex volume form $\Omega$ such that outside the geodesic $2\delta$-balls they agree with the hyperbolic structures coming from Proposition \ref{action on R}, whilst inside the balls of radius $\delta$ they agree with the holomorphic structures coming from the holomorphic geometry of $\mathcal O(-1)\oplus \mathcal O(-1)$. It follows from the $\SO(4)$-invariance in Lemma \ref{interpolation} that $J$ and $\Omega$ are $\sigma$-invariant. 

In this way the quotient $Z/\sigma$ is a symplectic orbifold with an almost complex structure and complex volume form which are modelled near the singular curves on the holomorphic geometry of $\mathcal O(-1)\oplus \mathcal O( -1)/\Z_2$. It follows from Lemma \ref{twistor resolution} that there is a resolution $\hat Z \to Z/\sigma$ in which the singular curves have been replaced by copies of $\C\P^1\times \C\P^1$ with normal bundle $\mathcal O(-2,-2)$; moreover, $\hat Z$ carries an almost complex structure $\hat J$ and complex volume form $\hat \Omega$ so that $c_1(\hat Z,\hat J)=0$. 

Finally we need to define the symplectic structure on $\hat Z$. Pulling back the symplectic form via $\hat Z \to Z$ gives a symplectic form on the complement of the exceptional divisors. To extend it we use a standard fact about resolutions in Kähler geometry. Given any neighbourhood $U$ of the zero locus in $\mathcal O(-2,-2)$, there is a Kähler metric on $\mathcal O(-2,-2)$ for which the projection to $\mathcal O(-1)\oplus \mathcal O(-1)/\Z_2$ is an isometry on the complement of $U$. (This amounts to the fact that the zero locus has negative normal bundle.) 

So, in the model, the pull-back of the symplectic form extends over the exceptional divisor in a way compatible with holomorphic complex structure. Taking $U$ sufficiently small  and doing this near all 122 exceptional divisors defines a symplectic form $\omega$ on $\hat Z$ which is compatible with $\hat J$. 

\subsection{Simple-connectivity and non-Kählerity}

This section proves that $\hat Z$ is simply connected and admits no compatible integrable complex structure. The second fact will follow from the first and the fact that $b_3(\hat Z)=0$.

\begin{lemma}
$\hat Z$ is simply connected.
\end{lemma}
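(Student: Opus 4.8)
The plan is to exploit the fibre-bundle structure $Z \to M$ together with Lemma \ref{fundamental group lemma} (the right-exactness of the homotopy sequence for a suitable fibration), reducing everything to facts already established about the Davis orbifold $M/\sigma$. The main point is that $\hat Z$ is built in two stages from $M/\sigma$: first the twistor fibration gives $Z/\sigma \to M/\sigma$ with $S^2$-fibres, and then the crepant resolution gives $\hat Z \to Z/\sigma$ which replaces $122$ singular $\C\P^1$'s by copies of $\C\P^1\times\C\P^1$. So it is natural to show $\pi_1$ dies at each stage.

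First I would treat the composite projection $f\colon \hat Z \to M/\sigma$. Away from the $122$ singular points of $M/\sigma$ this is an honest $S^2$-bundle (the twistor fibration), so its fibres are simply connected; over each singular point the fibre is the exceptional set of the crepant resolution of $\mathcal O(-1)\oplus\mathcal O(-1)/\Z_2$, i.e.\ a copy of $\C\P^1\times\C\P^1$ (Lemma \ref{twistor resolution}), which is again simply connected and indeed has vanishing $\pi_1$. To apply Lemma \ref{fundamental group lemma} I must exhibit, around each point $y\in M/\sigma$, an open set $U$ for which $\pi_1(f^{-1}(U))=0$; over a small ball in the smooth part this is clear since $f^{-1}(U)\cong U\times S^2$, and over a singular point the relevant $f^{-1}(U)$ is a neighbourhood of $\C\P^1\times\C\P^1$ inside the total space of $\mathcal O(-2,-2)$, which deformation-retracts onto $\C\P^1\times\C\P^1$ and so is simply connected. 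Lemma \ref{fundamental group lemma} then gives the right-exact sequence
$$
\pi_1(\text{fibre}) \to \pi_1(\hat Z) \to \pi_1(M/\sigma) \to 0,
$$
and since the left-hand group is trivial we conclude $\pi_1(\hat Z)\cong\pi_1(M/\sigma)$, which is trivial as a topological space by the Lemma on the Davis orbifold proved earlier. Hence $\hat Z$ is simply connected.

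The step I expect to be the main obstacle is verifying the hypothesis of Lemma \ref{fundamental group lemma} at the singular points of $M/\sigma$ — that is, checking that over a small neighbourhood $U$ of a $\sigma$-fixed point the preimage $f^{-1}(U)\subset\hat Z$ is actually simply connected, and that the inclusion of the central fibre $\C\P^1\times\C\P^1$ induces an isomorphism on $\pi_1$ (here trivially, both being $0$). This requires knowing the local model of $\hat Z$ near an exceptional divisor precisely: near such a point $\hat Z$ looks like a neighbourhood of the zero section in $\mathcal O(-2,-2)\to\C\P^1\times\C\P^1$, which does retract onto the zero section, so $\pi_1=0$; one should also note that the $\Z_2$-quotient and resolution do not introduce any new loops because the local model $\mathcal O(-1)\oplus\mathcal O(-1)$ over a ball is itself simply connected and the exceptional locus is connected. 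Once this local picture is in hand the global argument is routine. An alternative, essentially equivalent, route would be to apply Lemma \ref{fundamental group lemma} twice — first to the twistor fibration $Z/\sigma\to M/\sigma$ to get $\pi_1(Z/\sigma)=0$, then to the resolution map $\hat Z\to Z/\sigma$, whose fibres are points off the singular set and copies of $\C\P^1\times\C\P^1$ over it — but folding both into the single map $f\colon \hat Z\to M/\sigma$ is cleaner.
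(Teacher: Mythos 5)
Your argument is correct and uses the paper's key tool, Lemma~\ref{fundamental group lemma}; the only difference is organizational. The paper applies the lemma twice — to $Z/\sigma \to M/\sigma$ (all fibres $S^2$) and then to $\hat Z \to Z/\sigma$ (fibres points or $S^2$'s), each time with an essentially immediate local check — whereas you fold both into one application to the composite $\hat Z\to M/\sigma$, which you yourself note is an equivalent route. The one place your version needs a word more care is the claim that $f^{-1}(U)$ over a singular point deformation-retracts onto the full exceptional $\C\P^1\times\C\P^1$: this is not a metric disc-bundle neighbourhood in $\mathcal O(-2,-2)$ but the preimage of a region of $\mathcal O(-1)\oplus\mathcal O(-1)/\Z_2$ that is star-shaped under fibre scaling (because the twistor projection $R\to H^4$ is homogeneous of degree one in the vector-bundle fibres), so its preimage in $\mathcal O(-2,-2)$ is likewise star-shaped and hence retracts onto the zero section — exactly the obstacle you correctly flagged as needing attention.
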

\begin{proof}
We first apply Lemma \ref{fundamental group lemma} to the map $Z/\sigma \to M/\sigma$. The fibres are $S^2$s and we see that $\pi_1(Z/\sigma) = 1$. Next we apply Lemma \ref{fundamental group lemma} to $\hat Z \to Z/\sigma$. This time the fibres are points or $S^2$s and we deduce that $\pi_1(\hat Z) =1$.
\end{proof}

To prove that $b_3(\hat Z)=0$ we invoke a lemma of McDuff on the cohomology of manifolds obtained by symplectic blow-ups. 

\begin{lemma}[McDuff \cite{mcduff2}]\label{mcduffs lemma}
Let $X$ be a symplectic manifold and $C \subset X$ a smooth symplectic submanifold of codimension $2k$. Let $\tilde X$ denote the blow-up of $X$ along $C$. Then the real cohomology of $\tilde X$ fits into a short exact sequence of graded vector spaces
$$
0 \to H^*(X) \to H^*(\tilde X) \to A^* \to 0
$$
where the first arrow is pull-back via $\tilde X \to X$ and where $A^*$ is free module over $H^*(C)$ with one generator in each dimension $2j$, $1\leq j \leq k-1$.
\end{lemma}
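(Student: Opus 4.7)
The plan is to compare the Mayer--Vietoris sequences for $X$ and $\tilde X$, exploiting the fact that the symplectic blow-up is locally modelled on the ordinary complex blow-up. A symplectic tubular neighbourhood of $C$ is determined up to symplectomorphism by its normal bundle $N$, which carries an $\omega$-compatible complex structure unique up to homotopy. Consequently the exceptional divisor $E$ is diffeomorphic to the projectivisation $\P(N)$, a $\C\P^{k-1}$-bundle over $C$; its normal bundle in $\tilde X$ is the tautological line bundle $\O(-1)$; and $\pi\colon \tilde X \to X$ restricts to a diffeomorphism $\tilde X \setminus E \to X\setminus C$.

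Choose a tubular neighbourhood $V\supset C$ and set $U=\pi^{-1}(V)$, $W=X\setminus C\cong \tilde X\setminus E$. Then $V$ retracts to $C$, $U$ retracts to $\P(N)$, and $V\cap W\cong U\cap W$ is homotopy equivalent to the unit sphere bundle $S(N)$. The map $\pi^*$ fits into a commutative ladder of Mayer--Vietoris sequences for $X=V\cup W$ and $\tilde X=U\cup W$; the columns over $W$ and over $V\cap W=U\cap W$ are identities, so the only non-trivial comparison is between $H^*(V)=H^*(C)$ and $H^*(U)=H^*(\P(N))$. The Leray--Hirsch theorem gives an isomorphism of $H^*(C)$-modules
$$
H^*(\P(N);\R)\cong H^*(C)\langle 1,\xi,\xi^2,\ldots,\xi^{k-1}\rangle
$$
where $\xi\in H^2(\P(N))$ is the tautological class. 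Splitting off the constant term writes $H^*(\P(N))=H^*(C)\oplus A^*$, with $A^*$ a free $H^*(C)$-module generated by $\xi,\xi^2,\ldots,\xi^{k-1}$ (one generator in each even dimension $2,4,\ldots,2(k-1)$), matching the shape of $A^*$ in the statement.

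To finish, I would first show $\pi^*$ is injective via the Gysin pushforward: $\pi$ has degree one, so $\pi_*\pi^*=\mathrm{id}$. A five-lemma style diagram chase in the Mayer--Vietoris ladder then identifies $\mathrm{coker}(\pi^*)$ with $\mathrm{coker}\bigl(H^*(C)\hookrightarrow H^*(\P(N))\bigr)$, that is, with $A^*$. The main obstacle is making this last step rigorous: one must verify that the restrictions to $S(N)$ of $H^*(\P(N))$ and of $H^*(C)$ are compatible through the projective bundle decomposition, by comparing the Gysin sequence of the circle bundle $S(N)\to\P(N)$ (with Euler class $-\xi$) with the Gysin sequence of the $S^{2k-1}$-bundle $S(N)\to C$ (with Euler class $c_k(N)$). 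Once this compatibility is in place, the long exact sequences collapse to the required short exact sequence $0\to H^*(X)\to H^*(\tilde X)\to A^*\to 0$.
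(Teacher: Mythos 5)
The paper does not prove this lemma: it is quoted from McDuff's article \cite{mcduff2} and used as a black box, so there is no internal proof to compare yours against. Judged on its own, your argument is correct and is essentially the standard one: the symplectic blow-up is modelled smoothly on the complex blow-up, so $E\cong\P(N)$, $\tilde X\setminus E\cong X\setminus C$, and the Mayer--Vietoris ladder for $X=V\cup W$ versus $\tilde X=U\cup W$ reduces everything to Leray--Hirsch for the $\C\P^{k-1}$-bundle $\P(N)\to C$, whose cokernel is exactly the free $H^*(C)$-module on $\xi,\dots,\xi^{k-1}$ in degrees $2,\dots,2(k-1)$. One remark: the step you flag as the ``main obstacle'' is not actually needed. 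Once you know all four vertical maps in the ladder are injective --- the two identities, the Leray--Hirsch inclusion $H^*(C)\hookrightarrow H^*(\P(N))$, and $\pi^*$ on $H^*(X)$ (via $\pi_!\pi^*=\mathrm{id}$ for the proper degree-one map $\pi$; in the paper's application $X$ is compact anyway) --- the ladder becomes a short exact sequence of acyclic complexes, so the complex of cokernels is acyclic. That cokernel complex reads $\cdots\to 0\to\mathrm{coker}(\pi^*)^n\to A^n\to 0\to\cdots$ because the cokernel over $V\cap W=U\cap W$ vanishes, and exactness gives $\mathrm{coker}(\pi^*)^n\cong A^n$ directly, with no need to compare the Gysin sequences of $S(N)\to\P(N)$ and $S(N)\to C$. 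With that observation your sketch closes up into a complete proof.
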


\begin{lemma}
$b_3(\hat Z) = 0$.
\end{lemma}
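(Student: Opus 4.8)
The plan is to compute $b_3(\hat Z)$ by relating it to $b_3(Z/\sigma)$ via McDuff's Lemma \ref{mcduffs lemma}, and then to compute $b_3(Z/\sigma)$ using the twistor fibration $Z/\sigma \to M/\sigma$. First I would apply Lemma \ref{mcduffs lemma} to the 122 symplectic blow-ups of the $\C\P^1$'s in $Z/\sigma$ needed to obtain $\hat Z$. Each such blow-up is along a codimension-4 symplectic submanifold (a $\C\P^1$), so $k=2$ and the ``new'' part $A^*$ is a free module over $H^*(\C\P^1)$ with a single generator in degree $2$. Consequently each blow-up adds cohomology only in even degrees, so $H^3(\hat Z;\R) \cong H^3(Z/\sigma;\R)$, and it suffices to show $b_3(Z/\sigma) = 0$.

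Next I would compute $H^3(Z/\sigma;\R)$. Since $\sigma$ acts on $Z$ with isolated fixed circles (the 122 fibre $\C\P^1$'s) and $Z/\sigma$ is the orbifold quotient, rational cohomology satisfies $H^*(Z/\sigma;\R) \cong H^*(Z;\R)^{\sigma}$, the $\sigma$-invariant part. So I need to understand $H^3(Z;\R)$ and the action of $\sigma$ on it. The twistor space $Z \to M$ is an $S^2$-bundle over the Davis manifold $M$; applying the Leray--Hirsch theorem (the fibre class generates $H^2$ of the fibre and extends globally, since $Z = \mathcal O(-1)\oplus\mathcal O(-1)$-type twistor space carries the tautological class), we get $H^*(Z;\R) \cong H^*(M;\R) \otimes H^*(S^2;\R)$ as $H^*(M)$-modules. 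In degree $3$ this gives $H^3(Z;\R) \cong H^3(M;\R) \oplus H^1(M;\R)$. Now I would invoke the homology computation of the Davis manifold from \cite{ratcliffe-tschantz}: the Davis manifold has $b_1(M) = 0$ and $b_3(M) = 0$ (it has the rational homology of $S^4$ in the relevant degrees, with all of its interesting Betti numbers concentrated in the middle degree $b_2$). Hence $H^3(Z;\R) = 0$ already, so a fortiori $b_3(Z/\sigma) = 0$, and therefore $b_3(\hat Z) = 0$.

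The main obstacle I anticipate is making the identification $H^*(Z;\R)\cong H^*(M;\R)\otimes H^*(S^2;\R)$ fully rigorous and, more importantly, pinning down the Betti numbers of the Davis manifold $M$ — in particular $b_1(M)=0$ and $b_3(M)=0$. The vanishing of $b_1$ follows from the fact that $\pi_1(M)$ has finite abelianisation (indeed the Davis manifold has finite $H_1$), which is already implicit in the discussion of $\pi_1(M/\sigma)$; the vanishing of $b_3$ then follows from Poincar\'e duality once $b_1 = 0$. So the real content reduces to the single input $b_1(M) = 0$, which is standard for the Davis manifold. An alternative route avoiding any appeal to $\sigma$-invariant cohomology would be to apply Lemma \ref{fundamental group lemma}-style Leray arguments directly to $Z/\sigma \to M/\sigma$: since $M/\sigma$ is simply connected with $b_1 = 0$ and, by the fixed-point analysis, has the rational cohomology of $M$ in low degrees away from the orbifold points, the $S^2$-fibration forces $H^3(Z/\sigma;\R) \cong H^3(M/\sigma;\R)\oplus H^1(M/\sigma;\R) = 0$. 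I would present the Leray--Hirsch version as the cleanest, citing \cite{ratcliffe-tschantz} for $b_1(M) = 0$ and deducing $b_3(M) = 0$ from Poincar\'e duality.
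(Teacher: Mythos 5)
Your proposal rests on the claim that the Davis manifold has $b_1(M)=0$, and this is false. The Ratcliffe--Tschantz computation referenced in the paper gives $H_1(M;\Z)\cong\Z^{24}$, so $b_1(M)=24$ and, by Poincar\'e duality, $b_3(M)=24$ as well. Consequently $H^3(Z;\R)\cong H^3(M)\oplus H^1(M)$ is far from zero, and the ``a fortiori'' step collapses. Note that this is not a minor slip in a reference: the actual proof in the paper has to work precisely because $H^1(M)\neq 0$. What saves the argument is not vanishing but \emph{equivariance}: $\sigma$ acts as $-1$ on $H^1(M;\R)$ (and hence, by Poincar\'e duality, as $-1$ on $H^3(M;\R)$), while it fixes the degree-$2$ Leray--Hirsch generator of the twistor fibration. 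Therefore $\sigma$ acts as $-1$ on all of $H^3(Z;\R)$, so the $\sigma$-invariant part is zero, and this is what kills $b_3$ after passing to the quotient. Your ``alternative route'' via $M/\sigma$ has the same misconception baked in when it asserts that $M/\sigma$ ``has the rational cohomology of $M$ in low degrees''; it doesn't, since $b_1(M/\sigma)=0\neq b_1(M)$.

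There is also a secondary issue in how you invoke McDuff's lemma. You apply it directly to $\hat Z\to Z/\sigma$, but $Z/\sigma$ is an orbifold and that map is a crepant resolution of orbifold singularities, not a symplectic blow-up along a smooth symplectic submanifold of a smooth manifold, so the hypotheses of Lemma~\ref{mcduffs lemma} are not met as stated. The paper avoids this by reversing the order: first blow up the $122$ fibres in the smooth manifold $Z$ to get $\tilde Z$ (a genuine symplectic blow-up, to which McDuff's lemma applies and yields $H^3(\tilde Z)\cong H^3(Z)$), then observe that $\sigma$ lifts to $\tilde Z$ and that $\hat Z=\tilde Z/\sigma$, so that $H^3(\hat Z;\R)\cong H^3(\tilde Z;\R)^{\sigma}\cong H^3(Z;\R)^{\sigma}=0$ by the $\sigma=-1$ action established above.
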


\begin{proof}
Recall that $Z \to M$ is the twistor space of the Davis manifold. We first blow up the 122 fibres which lie over the fixed points of $\sigma$ to obtain the new manifold $\tilde Z$. It follows from Lemma \ref{mcduffs lemma} that pulling back cohomology via $\tilde Z \to Z$ induces an isomorphism $H^3(\tilde Z) \cong H^3(Z)$.

Next, notice that $\sigma$ lifts to $\tilde Z$ and that $\hat Z = \tilde Z/\sigma$. We now show that $\sigma$ acts as $-1$ on $H^3(\tilde Z)$. To see this, consider the action of $\sigma$ on the Davis manifold $M$. It acts on $H^1(M)$ as $-1$ and hence also as $-1$ on $H^3(M)$. Now $Z \to M$ is a sphere-bundle so, by Leray--Hirsch, $H^*(Z)$ is a free module over $H^*(M)$ with a single generator in degree 2 corresponding to the first Chern class of the vertical tangent bundle. This generator is preserved by $\sigma$, so $\sigma$ acts as $-1$ on $H^3(Z)$ and hence also as $-1$ on $H^3(\tilde Z)$. From this we deduce that $H^3(\hat Z) =0$. For if it contained a non-zero element, the pull-back to $\tilde Z$ would be a $\sigma$-invariant element of $H^3(\tilde Z)$.
\end{proof}

\begin{corollary}
There is no K\"ahler structure on $\hat Z$ with $c_1=0$. In particular, the symplectic structure on $\hat Z$ described above admits no compatible complex structure.
\end{corollary}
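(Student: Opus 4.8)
The plan is to derive a contradiction from the two properties already established for $\hat Z$ --- that it is simply connected and that $b_3(\hat Z)=0$ --- via the standard fact that a compact K\"ahler manifold which is simply connected and has vanishing first Chern class cannot have $b_3=0$. This is precisely the topological restriction alluded to in the introduction, so the corollary should follow with essentially no further work.

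First I would assume, for contradiction, that $\hat Z$ carries a K\"ahler structure with $c_1=0$ in $H^2(\hat Z;\R)$. By Yau's solution of the Calabi conjecture, $\hat Z$ then admits a Ricci-flat K\"ahler metric; the induced Hermitian metric on the canonical bundle $K_{\hat Z}$ is flat, and a flat line bundle over the simply-connected $\hat Z$ is holomorphically trivial. Hence $h^{3,0}(\hat Z)=h^0(\hat Z,K_{\hat Z})=1$. (Equivalently, one may quote the Beauville--Bogomolov decomposition: a simply-connected compact K\"ahler threefold with $c_1=0$ is an irreducible Calabi--Yau threefold, so again $h^{3,0}=1$.) By Hodge theory $b_3(\hat Z)=2h^{3,0}(\hat Z)+2h^{2,1}(\hat Z)\geq 2$, contradicting the vanishing of $b_3(\hat Z)$. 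This establishes the first assertion.

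For the ``in particular'', I would show that a compatible integrable complex structure on $(\hat Z,\omega)$ would be exactly such a K\"ahler structure. Suppose $J'$ is an $\omega$-compatible integrable complex structure; then $(\hat Z,\omega,J')$ is compact K\"ahler. Since the space of $\omega$-compatible almost complex structures is nonempty and contractible, $J'$ is homotopic through almost complex structures to the structure $\hat J$ used in the construction, so $c_1(\hat Z,J')=c_1(\hat Z,\hat J)=0$. Thus $\hat Z$ would carry a K\"ahler structure with $c_1=0$, contradicting the first assertion.

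Essentially all the substantive work has already been done in the lemmas on simple connectivity and on $b_3=0$; the only point requiring care is to invoke the K\"ahler constraint in the correct form, i.e.\ to use Yau's theorem (or Beauville--Bogomolov) to upgrade ``$c_1=0$ as a real class'' to ``$K_{\hat Z}$ holomorphically trivial'', together with the elementary observation that all $\omega$-compatible almost complex structures carry the same first Chern class as $\hat J$. I do not expect a genuine obstacle in this argument.
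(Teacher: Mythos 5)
Your proposal is correct and reaches the same contradiction as the paper, namely that $b_3(\hat Z)=0$ is incompatible with a K\"ahler structure with $c_1=0$ on a simply-connected manifold. The route is different, however. You invoke Yau's solution of the Calabi conjecture (or, equivalently, the Beauville--Bogomolov decomposition, which itself rests on Yau) to produce a Ricci-flat metric, then use triviality of the holonomy of the induced flat connection on $K_{\hat Z}$ to get a nowhere-vanishing holomorphic $3$-form. The paper avoids all analysis and argues purely with the exponential sequence: since $\hat Z$ is simply connected, $b_1=0$, so $h^{0,1}=0$ and the Picard torus $\mathrm{Pic}^0(\hat Z)$ is trivial; therefore the map $c_1\colon \mathrm{Pic}(\hat Z)\to H^2(\hat Z,\Z)$ is injective, and $c_1(K_{\hat Z})=0$ forces $K_{\hat Z}\cong\mathcal O$, whence $h^{3,0}\geq 1$ and $b_3\geq 2$ by Hodge symmetry. (Note that $H^2(\hat Z,\Z)$ is torsion-free for simply-connected $\hat Z$, so the distinction between real and integral $c_1$ is moot in either approach.) Your heavier machinery does buy a little extra generality --- the Calabi--Yau argument would still apply if $c_1$ were zero only in real cohomology on a non--simply-connected manifold with $b_1=0$, where torsion could intervene --- but for the case at hand the paper's exponential-sequence argument is shorter and more elementary, and I would recommend it over quoting Yau. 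Your treatment of the ``in particular'' clause, via contractibility of the space of $\omega$-compatible almost complex structures, is exactly right and is the intended reading.
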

\begin{proof}
For a Kähler manifold, the vanishing of $b_1$ implies the Picard torus is trivial. Now $c_1=0$ implies the existence of a holomorphic volume form, hence $b_3 \geq 2$. 
\end{proof}

Note that whilst we have shown that there is no Calabi--Yau Kähler structure on $\hat Z$, it is not clear, to us at least, whether or not $\hat Z$ admits a Kähler structure when one does not place a restriction on $c_1$.

\section{Some symplectic ``Fano'' manifolds}
\label{symplectic fanos}

In this section we explain how hyperbolic and complex-hyperbolic geometry in higher dimensions leads to symplectic manifolds for which the first Chern class is a positive multiple of the symplectic class, non-Kähler analogues of Fano manifolds.  

\subsection{Hyperbolic geometry in even dimensions}

The passage from hyperbolic 4-manifolds to symplectic 6-manifolds can be generalised to every even dimension, with hyperbolic $2n$-manifolds giving symplectic $n(n+1)$-manifolds. This was first explained via twistors, by Reznikov \cite{reznikov} (although Reznikov did not consider the first Chern class of his examples). Here we give an alternative description in terms of coadjoint orbits, as in \S\ref{coadjoint description}.

The Lie algebra $\so(2n,1)$ consists of $(2n+1)\times (2n+1)$ matrices of the form
\begin{equation*}
\left(
\begin{array}{cc}
0 & u^t\\
u & A
\end{array}
\right),
\end{equation*}
where $u$ is a column vector in $\R^{2n}$ and $A \in \so(2n)$. Those elements with $u=0$ generate $\so(2n) \subset \so(2n,1)$. The Killing form is definite on $\so(2n,1)$ and so gives an equivariant isomorphism $\so(2n,1) \cong \so(2n,1)^*$. As in \S\ref{coadjoint description}, we consider the orbit of
$$
\xi=\left(
\begin{array}{cc}
0 & 0\\
0 & J_0
\end{array}
\right)
$$
where $J_0\in \so(2n)$ is a choice of almost complex structure on $\R^{2n}$ (i.e., $J_0^2=-1$). The stabiliser of $\xi$ is $\U(n)$. We write $Z_{2n}$ for the coadjoint orbit of $\xi$. 

Note that $Z_{2n} \cong \SO(2n,1)/\U(2n)$ is symplectic of dimension $n(n+1)$. It fibres over hyperbolic space $H^{2n} \cong \SO(2n,1)/\SO(2n)$ with fibre isomorphic to the space $\SO(2n)/\U(n)$ of orthogonal complex structures on $\R^{2n}$ inducing a fixed orientation. In other words, it is the twistor space of~$H^{2n}$.

The same proof as in Lemma \ref{U(2) decomposition} gives the following result.

\begin{lemma}\label{U(n) decomposition}
There is an isomorphism of $U(n)$-representation spaces:
$$
\so(2n,1) \cong \u(n) \oplus \Lambda^2(\C^n)^* \oplus \C^n.
$$
Given a point $z\in Z_{2n}$ with stabiliser $\U(n) \subset 
\SO(2n,1)$ there is a $U(n)$-equivariant isomorphism 
\begin{equation}
T_z \cong \Lambda^2(\C^n)^* \oplus \C^n,
\label{tangent splitting}
\end{equation}
in which the $\Lambda^2(\C^n)^*$ summand is tangent to the fibre of the projection $Z_{2n} \to H^{2n}$.
\end{lemma}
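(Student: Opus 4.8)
The plan is to follow the proof of Lemma~\ref{U(2) decomposition} line by line, replacing $2$ with $n$, and then to deduce the tangent-space statement from the coadjoint-orbit description. The first step is the $\SO(2n)$-equivariant splitting. In the block form displayed above, the elements of $\so(2n,1)$ with $u=0$ form a copy of $\so(2n)$ and those with $A=0$ form a copy of $\R^{2n}$ (the standard representation, through the vector $u$); these subspaces are $\SO(2n)$-invariant and complementary, so $\so(2n,1)\cong\so(2n)\oplus\R^{2n}$ as $\SO(2n)$-modules, hence also as $\U(n)$-modules upon restricting to $\U(n)\subset\SO(2n)$. Note that the $\R^{2n}$ summand is just $\C^n$ as a $\U(n)$-module.

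The second step refines the $\so(2n)$ summand. Identify $\so(2n)\cong\Lambda^2(\R^{2n})^*$ via the metric and fix the standard complex structure $J_0$, so that $\R^{2n}\cong\C^n$. Every real $2$-form $a$ decomposes uniquely as $a=\alpha+\beta+\bar\beta$, with $\alpha$ a real $(1,1)$-form and $\beta$ of type $(2,0)$, i.e. $\beta\in\Lambda^2(\C^n)^*$; since the type decomposition is invariant under $\GL(n,\C)$, and a fortiori under $\U(n)$, the assignment $a\mapsto(\alpha,\beta)$ is a $\U(n)$-equivariant isomorphism $\Lambda^2(\R^{2n})^*\cong\Lambda^{1,1}_\R\oplus\Lambda^2(\C^n)^*$, the second factor being viewed as the underlying real space of a complex $\U(n)$-module. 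Contracting a real $(1,1)$-form with the Hermitian metric produces a skew-Hermitian endomorphism of $\C^n$, and this gives a $\U(n)$-equivariant isomorphism $\Lambda^{1,1}_\R\cong\u(n)$. Combined with the first step, this yields the claimed isomorphism $\so(2n,1)\cong\u(n)\oplus\Lambda^2(\C^n)^*\oplus\C^n$.

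For the final assertion I would read off the tangent space directly. As recorded just before the statement, $Z_{2n}=\SO(2n,1)/\U(n)$ with $\U(n)$ the stabiliser of $\xi$, so the isotropy representation identifies $T_zZ_{2n}$ with the $\U(n)$-module $\so(2n,1)/\u(n)$, which by the two steps above is $\Lambda^2(\C^n)^*\oplus\C^n$. The fibre of $Z_{2n}\to H^{2n}$ through $z$ is the $\SO(2n)$-orbit $\SO(2n)/\U(n)$, so its tangent space is $\so(2n)/\u(n)$, which is precisely the $\Lambda^2(\C^n)^*$ summand just produced; meanwhile $\C^n$ is the image of the $\R^{2n}$ summand, which maps isomorphically onto the tangent space of $H^{2n}$ at the image point.

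I expect no substantive obstacle, since the whole force of the remark ``the same proof'' is that everything transfers verbatim. The only points needing any care are the $\U(n)$-equivariance of the $(p,q)$-type decomposition of $2$-forms (immediate, as $J_0$ is $\U(n)$-fixed), the identification $\Lambda^{1,1}_\R\cong\u(n)$ via the Hermitian metric, and the verification that under these identifications the fibre direction of the twistor projection is exactly the $\Lambda^2(\C^n)^*$ factor, unmixed with $\C^n$ — which is clear once one observes that the fibre is an $\SO(2n)$-orbit.
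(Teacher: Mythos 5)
Your proposal is correct and follows essentially the same route the paper intends: the paper simply declares ``the same proof as in Lemma~\ref{U(2) decomposition}'' and you have carried that out faithfully, with the two steps (splitting off $\R^{2n}\cong\C^n$ via $u$, then decomposing $\so(2n)\cong\Lambda^2(\R^{2n})^*$ into $\Lambda^{1,1}_\R\oplus\Lambda^{2,0}\cong\u(n)\oplus\Lambda^2(\C^n)^*$) matching the $n=2$ argument verbatim. The addendum on the isotropy representation and the identification of the fibre direction with $\so(2n)/\u(n)\cong\Lambda^2(\C^n)^*$ is also what the paper has in mind (compare the twistorial remark after Proposition~\ref{action on R} in the $n=2$ case), so nothing here is a genuinely different approach.
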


As in Lemma \ref{uniqueness of structure}, by $\U(n)$-equivariance, the symplectic form on $T_z$ is proportional under (\ref{tangent splitting}) to the form induced by the Euclidean structure on $\C^n$. To show this constant of proportionality is positive, first check that the forms are genuinely equal in the case $n=1$, where $\so(2,1) \cong \u(1) \oplus\C$ as a $\U(1)$-representation. This amounts to the fact that $Z_2 = H^2$ with symplectic form the hyperbolic area form. Next, use induction and the fact that the decompositions of $\so(2n,1)$ and $\so(2n+2,1)$ from Lemma \ref{U(n) decomposition} are compatible with the obvious inclusions of the summands induced by a choice of $\C^n \subset \C^{n+1}$.

Having seen that the isomorphism of (\ref{tangent splitting}) is symplectic, we define a compatible $\SO(2n,1)$-invariant almost complex structure on $Z_{2n}$ by declaring (\ref{tangent splitting}) to be a complex linear isomorphism. With this almost complex structure, $TZ = V \oplus H$ splits as a sum of complex bundles with $V \cong \Lambda^2H^*$, corresponding to (\ref{tangent splitting}). 

In the twistorial picture, this is simply the decomposition of $TZ$ induced by the Levi--Civita connection of $H^{2n}$. The almost complex structure here is the (non-integrable) ``Eells--Salamon'' structure, given by reversing the (integrable) ``Atiyah--Hitchin--Singer'' structure in the vertical directions.

\begin{lemma}
$c_1(Z_{2n}) = (2-n)c_1(H)$.
\end{lemma}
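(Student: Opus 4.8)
The plan is to compute $c_1(Z_{2n})$ from the splitting $TZ = V \oplus H$ of Lemma \ref{U(n) decomposition}, in which $V \cong \Lambda^2 H^*$ as complex vector bundles (with $H$ of complex rank $n$, the pullback of $TH^{2n}$ with its chosen complex structure, and $V$ of complex rank $\binom{n}{2}$). Since $c_1$ is additive over direct sums, $c_1(Z_{2n}) = c_1(H) + c_1(V) = c_1(H) + c_1(\Lambda^2 H^*)$, so everything reduces to expressing $c_1(\Lambda^2 E^*)$ in terms of $c_1(E)$ for a rank-$n$ complex bundle $E$. This is a standard splitting-principle computation: writing the Chern roots of $E$ as $x_1,\dots,x_n$, the bundle $\Lambda^2 E$ has Chern roots $\{x_i + x_j : i<j\}$, so $c_1(\Lambda^2 E) = \sum_{i<j}(x_i+x_j) = (n-1)\sum_i x_i = (n-1)c_1(E)$, whence $c_1(\Lambda^2 E^*) = -(n-1)c_1(E) = (n-1)c_1(E^*)$. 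Applying this with $E = H^*$ (so $E^* = H$, and $\Lambda^2 E^* = \Lambda^2 H \cong V^*$, i.e.\ $\Lambda^2 H^* \cong V$) gives $c_1(V) = c_1(\Lambda^2 H^*) = (n-1)c_1(H^*) = -(n-1)c_1(H)$.

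Combining, $c_1(Z_{2n}) = c_1(H) + c_1(V) = c_1(H) - (n-1)c_1(H) = (2-n)c_1(H)$, which is the claim. Here $c_1(H)$ denotes the first Chern class of the complex bundle $H$ on $Z_{2n}$; strictly one should note that what the statement calls "$c_1(H)$" is exactly this class (it is not the first Chern class of $H^{2n}$ itself, which is even-dimensional but real, so the notation is shorthand for the Chern class of the horizontal distribution).

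The only genuine point to be careful about — and the place I expect the main (minor) obstacle — is the precise identification $V \cong \Lambda^2 H^*$ as \emph{complex} bundles, including getting the dual on the correct side. Lemma \ref{U(n) decomposition} records $T_z \cong \Lambda^2(\C^n)^* \oplus \C^n$ as $\U(n)$-representations, with the $\C^n$ factor being the standard representation $\cong H_z$ and the $\Lambda^2(\C^n)^*$ factor being $\cong V_z$; since these identifications are $\U(n)$-equivariant and $\U(n)$ is the structure group of the frame bundle of $H$, they globalize to complex bundle isomorphisms $H \cong$ (associated bundle of $\C^n$) and $V \cong \Lambda^2 H^*$. One should double-check the dual placement against the $n=1$ and $n=2$ sanity checks: for $n=1$, $V$ is zero-dimensional and the formula gives $c_1(Z_2) = c_1(H) = c_1(H^2)$, consistent with $Z_2 = H^2$; for $n=2$, $\Lambda^2 H^*$ is a line bundle with $c_1 = -c_1(H)$ (it is the canonical bundle of the fibre $\C\P^1 \cong \SO(4)/\U(2)$ up to the horizontal twisting), giving $c_1(Z_4) = 0$, which matches the $c_1 = 0$ statement for the small resolution. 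These checks confirm the sign conventions, after which the general formula follows.
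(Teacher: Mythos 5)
Your proof is correct and follows essentially the same route as the paper: use the $\U(n)$-equivariant splitting $TZ = V \oplus H$ with $V \cong \Lambda^2 H^*$ and the identity $c_1(\Lambda^2 E) = (n-1)c_1(E)$ for a rank-$n$ bundle $E$. The paper simply cites that identity without writing out the Chern-roots computation or the $n=1,2$ sanity checks you include, but the argument is identical in substance.
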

\begin{proof}
This follows from $TZ = \Lambda^2H^*\oplus H$ along with the fact that for any complex rank $n$ vector bundle $E$, $c_1(\Lambda^2E)=(n-1)c_1(E)$. 
\end{proof}

We now determine the symplectic class of $Z_{2n}$. First, consider the restriction of the symplectic structure to the fibres of $Z_{2n} \to H^{2n}$. It follows from (\ref{tangent splitting}) that the fibres are symplectic and almost-complex submanifolds. Moreover, the stabiliser $\SO(2n) \subset \SO(2n,1)$ of a point $x \in H^{2n}$ acts on the fibre $F_x$ over $x$ preserving both these structures. As mentioned above, $F_x \cong \SO(2n)/\U(n)$ is the space of orthogonal complex structures on $T_xH^{2n}$ inducing a fixed orientation. The standard theory of symmetric spaces gives $F_x$ a symmetric Kähler structure. It follows from $\SO(2n)$-equivariance that this must agree with the restriction of the symplectic and almost complex structures from~$Z_{2n}$.

It is also standard that the symmetric Kähler structure on $F = \SO(2n)/\U(n)$ comes from a projective embedding. Let $E \to F$ denote the ``tautological'' bundle: each point of $F$ is a complex structure on $\R^{2n}$; the fibre of $E$ at a point $J \in F$ is the complex vector space $(\R^{2n}, J)$. The bundle $\det E^*$ is ample and $c_1(\det E^*) = -c_1(E)$ is represented by the symmetric symplectic form on $F$. 

In our situation, the splitting (\ref{tangent splitting}) tells us that the tautological bundle of the fibre $F_x$ is simply $H|_{F_x}$. It follows that on restriction to a fibre, the symplectic class agrees with $-c_1(H)$. However, topologically, $Z_{2n} \cong F \times H^{2n}$ is homotopic to $F$. Since their fibrewise restrictions agree, it follows that $-c_1(H)$ is equal to the symplectic class of $Z_{2n}$. Hence, writing $\omega$ for the symplectic form on $Z_{2n}$, we have:

\begin{proposition}\label{first chern class}
$c_1(Z_{2n}) = (n-2) [\omega]$.
\end{proposition}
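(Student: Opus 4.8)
The plan is to read off $c_1(Z_{2n}) = (n-2)[\omega]$ by combining the preceding lemma, which gives $c_1(Z_{2n}) = (2-n)c_1(H)$, with the identity $[\omega] = -c_1(H)$ in $H^2(Z_{2n};\R)$; together these two facts immediately yield
$$
c_1(Z_{2n}) = (2-n)c_1(H) = (2-n)\bigl(-[\omega]\bigr) = (n-2)[\omega].
$$
So the only substantive point is to establish $[\omega] = -c_1(H)$.

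To do this I would first reduce to a single fibre: since $H^{2n}$ is contractible, the bundle $Z_{2n}\to H^{2n}$ is trivial, so $Z_{2n}\cong F\times H^{2n}$ is homotopy equivalent to a fibre $F_x \cong \SO(2n)/\U(n)$, and the restriction map $H^2(Z_{2n};\R)\to H^2(F_x;\R)$ is an isomorphism. It therefore suffices to prove $[\omega]|_{F_x} = -c_1(H)|_{F_x}$. Next, the splitting (\ref{tangent splitting}) exhibits $F_x$ as simultaneously a symplectic and an almost-complex submanifold of $Z_{2n}$, with tangent space the summand $\Lambda^2(\C^n)^*$, on which the stabiliser $\SO(2n)$ of $x$ acts preserving both structures. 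Uniqueness of $\SO(2n)$-invariant forms (as in Lemma \ref{uniqueness of structure}) then forces the induced structure on $F_x$ to be a multiple of the symmetric K\"ahler structure on $\SO(2n)/\U(n)$, and the sign of this multiple is pinned down exactly as in the paragraph showing that (\ref{tangent splitting}) is symplectic rather than anti-symplectic: one checks the base case $n=1$, where $Z_2 = H^2$ carries its (positive) hyperbolic area form, and propagates via the compatible inclusions $\so(2n,1)\hookrightarrow\so(2n+2,1)$ of Lemma \ref{U(n) decomposition}. Hence $[\omega]|_{F_x}$ is the positive symmetric K\"ahler form on $F_x$.

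It then remains to identify this form cohomologically. The Hermitian symmetric space $F = \SO(2n)/\U(n)$ carries a tautological complex rank-$n$ bundle $E$ whose fibre at $J\in F$ is $(\R^{2n},J)$; by standard symmetric-space theory $\det E^{*}$ is ample and its first Chern class $c_1(\det E^{*}) = -c_1(E)$ is represented by the symmetric K\"ahler form. Finally, (\ref{tangent splitting}) identifies the horizontal bundle $H$ of $Z_{2n}\to H^{2n}$, restricted to $F_x$, with this tautological bundle $E$ (both have fibre $(\R^{2n},J)$ over $J$). Combining these gives $[\omega]|_{F_x} = -c_1(E) = -c_1(H)|_{F_x}$, hence $[\omega] = -c_1(H)$ by the reduction above, which completes the argument. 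The one place requiring genuine care is this last cohomological step together with the sign bookkeeping; once the earlier lemmas (especially (\ref{tangent splitting})) and the description of $F$ as a Hermitian symmetric space are granted, the rest is formal.
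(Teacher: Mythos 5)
Your argument is correct and follows the same route as the paper: combine the preceding lemma $c_1(Z_{2n})=(2-n)c_1(H)$ with the identification $[\omega]=-c_1(H)$, which you establish by restricting to a fibre $F_x\cong\SO(2n)/\U(n)$, recognising the induced structure there as the symmetric K\"ahler structure (with sign pinned down by the $n=1$ base case and the inclusion $\so(2n,1)\hookrightarrow\so(2n+2,1)$), identifying $H|_{F_x}$ with the tautological bundle $E$ so that $[\omega]|_{F_x}=-c_1(E)=-c_1(H)|_{F_x}$, and finally using the homotopy equivalence $Z_{2n}\simeq F$ to upgrade this fibrewise equality to a global one. The only cosmetic difference is that you state the reduction to a single fibre at the outset rather than at the end, but the decomposition, the key bundle identification, and the sign bookkeeping are all exactly as in the paper.
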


\subsection{Compact quotients}

The symplectic form and almost complex structure on $Z_{2n}$ as well as the splitting $TZ_{2n} = V \oplus H$ and identification $V \cong \Lambda^2 H^*$ are $\SO(2n,1)$-invariant. It follows that all these arguments, in particular Proposition \ref{first chern class}, apply equally to smooth quotients of $Z_{2n}$ by subgroups of $\SO(2n,1)$. This means we can find compact examples of symplectic manifolds for which $c_1$ is a positive multiply of $[\omega]$. Let $\Gamma \subset \SO(2n,1)$ be the fundamental group of a compact hyperbolic $2n$-manifold $M$. $\Gamma$ acts by symplectomorphisms on $Z_{2n}$ to give as quotient a symplectic manifold of dimension $n(n+1)$. It fibres $Z_{2n}/\Gamma \to H^{2n}/\Gamma$ over $M$ as the twistor space of $M$.  When $n\geq 3$, these are compact symplectic manifolds for which $c_1$ is a positive multiple of $[\omega]$ (for $n\geq 3$). Such manifolds can never be Kähler since they have hyperbolic fundamental group. Indeed, Kähler Fano manifolds are even simply connected. These examples (originally appearing in Reznikov's article \cite{reznikov}) are, to the best of our knowledge, the first non-Kähler symplectic ``Fano'' manifolds. The lowest dimension which can be achieved in this way is 12. In this case, the fibration over $M^6$ has fibres $\C\P^3 \cong \SO(6)/\U(3)$.

\section{Concluding remarks} \label{concluding remarks}

The constructions presented here seem, to us at least, to lead several natural questions. We describe some of these below.
 
\subsection{Simply connected symplectic examples}

Whilst we only give one example of a simply-connected symplectic manifold with $c_1=0$, we plan to exploit a similar construction to produce simply connected 6-dimensional examples with arbitrarily large Betti numbers (see the forthcoming article \cite{fine-panov3}). By comparison, note that it is still unknown if there are infinitely many topologically distinct \emph{Kähler} Calabi--Yau manifolds of fixed dimension.

First we describe an infinite sequence of simply-connected compact symplectic orbifolds with $c_1=0$. They come from hyperbolic orbifold metrics on $S^4$, built using Coxeter polytopes. Recall that a \emph{hyperbolic Coxeter polytope} is a convex polytope with totally geodesic boundary in $H^n$ and whose dihedral angles are $\pi/k$ for $k\in \N$; the polytope is said to be \emph{right-angled} if all dihedral angles are $\pi/2$.

\begin{lemma}
Let $P$ be an $n$-dimensional compact hyperbolic Coxeter polytope. Doubling $P$ gives a hyperbolic orbifold metric on $S^n$.
\end{lemma}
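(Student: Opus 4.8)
The plan is to construct the orbifold metric on $S^n$ directly by gluing two isometric copies of $P$ along their boundaries, and then to check that the resulting object is topologically a sphere and that the metric singularities along the faces are genuine orbifold singularities. First I would recall the double construction: take two copies $P_+$ and $P_-$ of the hyperbolic Coxeter polytope $P\subset H^n$, and glue them along the identity map of their boundaries $\partial P_+ \cong \partial P \cong \partial P_-$. Since $P$ is a convex cell, $\partial P$ is an $(n-1)$-sphere (it is the boundary of an $n$-ball), and gluing two $n$-balls along their boundary spheres yields $S^n$; this settles the underlying topological type. The metric on the interior of each copy is the hyperbolic metric, and along the shared boundary faces the two copies fit together by a reflection.

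Next I would verify the orbifold structure along the codimension-one and higher-codimension strata. Along the interior of a facet (codimension-one face), a neighbourhood in the double is two half-balls glued along a totally geodesic hyperplane, which is smooth (in fact a smooth hyperbolic manifold locally) — here the ``doubling'' reflection identifies the metric on either side, so there is no cone angle and no orbifold locus. The orbifold singularities appear along the lower-dimensional faces. At a codimension-two face where two facets meet at dihedral angle $\pi/k$, a normal slice of the double looks like two copies of a plane wedge of angle $\pi/k$ glued along their two edges, i.e.\ a cone of total angle $2\pi/k$; this is exactly the quotient $\R^2/\Z_k$ where $\Z_k$ acts by rotation, so it is a standard orbifold point. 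More generally, near a face of codimension $j$, the local picture of the double is $\R^{n-j}\times (\text{cone on a spherical Coxeter orbifold})$, and the Coxeter condition (all dihedral angles $\pi/k$, $k\in\N$) is precisely what guarantees that the link of the face is a spherical orbifold of the form $S^{j-1}/W$ for a finite reflection group $W$ — this is the classical fact that a Coxeter polytope's face links generate finite reflection groups. Hence the double carries a well-defined hyperbolic orbifold structure with singular locus the image of the codimension-$\geq 2$ skeleton of $P$.

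The one point requiring a little care — and I expect this to be the main (modest) obstacle — is checking that the developing-map/orbifold-atlas axioms really are satisfied along the faces of all codimensions simultaneously, i.e.\ that the local reflection groups at nested faces are compatible and generate a consistent orbifold chart. This is standard (it is essentially Poincaré's polyhedron theorem / the theory of Coxeter orbifolds, cf.\ the reflection orbifold $H^n/W_P$ associated to the Coxeter group $W_P$ generated by reflections in the facets of $P$), and indeed the cleanest formulation is to observe that the double of $P$ is simply the underlying space of the orbifold $H^n/W_P^{(2)}$, where $W_P^{(2)}\subset W_P$ is the index-two subgroup of \emph{orientation-preserving} elements — equivalently, the double is the $\Z_2$-quotient $(H^n/W_P^{(2)})$ presented by folding along $P$. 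The Coxeter hypothesis is used exactly once, to ensure $W_P$ is a discrete reflection group acting on $H^n$ with fundamental domain $P$, so that the quotient is a genuine hyperbolic orbifold; compactness of $P$ then gives compactness of the orbifold, and the topological identification of the double with $S^n$ completes the proof.
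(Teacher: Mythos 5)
Your proposal is correct and, via the concluding ``cleanest formulation'' paragraph, takes essentially the same approach as the paper: present the double as $H^n/G'$, where $G'$ is the index-two orientation-preserving subgroup of the Coxeter reflection group $G$ generated by reflections in the facets of $P$, observing that $G'$ has fundamental domain $P\cup P'$ and that the double is $S^n$ because $P$ is homeomorphic to a closed $n$-ball. The stratum-by-stratum verification of the orbifold structure you give first is accurate but is subsumed by that quotient description, which is exactly the form the paper's proof uses.
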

\begin{proof}
Let $G$ be the group of isometries of $H^n$ generated by reflections in the faces of $P$ and let $G'$ be the subgroup of $G$ of index two consisting of all orientation preserving elements. Then $H^n/G'$ is the double of $P$. Indeed the fundamental domain of $G$ is $P$ itself, whilst the fundamental domain of $G'$ is $P \cup P';$ where $P'$ is the reflection of $P$ in a face. Identifying further the faces of $P\cup P'$ via $G'$ we obtain the double of~$P$. The double is homeomorphic to $S^n$, since $P$ is homeomorphic to a closed $n$-ball.
\end{proof}

Up to dimension 6, compact hyperbolic Coxeter polytopes are known to be abundant.

\begin{theorem}[Potyagilo--Vinberg \cite{potyagilo-vinberg}, Allcock \cite{allcock}]
\label{hyperbolic Coxeter polytopes}
In all dimensions up to 6 there exist infinitely many compact hyperbolic Coxeter polytopes. Moreover, in dimensions up to 4 there are infinitely many compact hyperbolic right-angled polytopes. 
\end{theorem}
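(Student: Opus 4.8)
The plan is to deduce both statements from a single geometric operation---\emph{doubling a polytope across a facet}---applied to a short list of explicit ``seed'' polytopes. Call a facet $F$ of a compact hyperbolic Coxeter polytope $P\subset H^n$ \emph{reflective} if every dihedral angle of $P$ along a ridge of $P$ contained in $F$ equals $\pi/2$. If $F$ is reflective, let $P'$ be the mirror image of $P$ in the geodesic hyperplane spanned by $F$, and set $DP=P\cup_F P'$. Then $DP$ is again a compact convex polytope all of whose dihedral angles have the form $\pi/k$: along each ridge of $P$ contained in $F$ the two right angles add to $\pi$, so that ridge disappears and the two adjacent facets fuse, while every surviving dihedral angle is inherited unchanged from $P$ or $P'$. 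Thus $DP$ is a compact hyperbolic Coxeter polytope with strictly more facets than $P$ and with $\vol(DP)=2\vol(P)$. If moreover $P$ is right-angled, then every facet of $P$ is reflective and $DP$ is again right-angled.

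Given a seed $P$ together with a reflective facet, one iterates. In the right-angled case one may double across any facet at each stage---the result stays right-angled, so reflective facets never run out. In the general Coxeter case one must fix, once and for all, a distinguished reflective facet whose reflectivity is preserved by the doubling, so that the process can be continued indefinitely. Either way one obtains an infinite sequence $P, DP, D^2P, \dots$ of compact hyperbolic Coxeter polytopes, pairwise non-isometric since their volumes $2^k\vol(P)$ are all distinct. Hence the theorem reduces to producing, in each dimension $2\le n\le 6$, one compact hyperbolic Coxeter polytope carrying a reflective facet, and---for the second statement---one compact \emph{right-angled} polytope in each dimension $2\le n\le 4$.

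For the right-angled seeds: in $H^2$ a regular right-angled pentagon suffices (indeed regular right-angled $m$-gons exist for every $m\ge 5$, which already gives the infinitude directly); in $H^3$ one may take the right-angled dodecahedron, realisable by Andreev's theorem, while the L\"obell polytopes furnish an explicit infinite family; and in $H^4$ one takes the compact right-angled $120$-cell. This range is sharp: Potyagilo--Vinberg \cite{potyagilo-vinberg} prove that $H^n$ admits no compact right-angled polytope for $n\ge 5$, which is exactly why the ``moreover'' clause is confined to $n\le 4$. For the two remaining dimensions $n=5,6$ one instead exhibits explicit compact Coxeter polytopes (for instance those of Bugaenko) that possess a reflective facet; Allcock's \cite{allcock} contribution is precisely to organise the doubling so that it stays within the class of compact Coxeter polytopes in these dimensions.

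The crux---and the step I expect to be the main obstacle---is the construction and verification of the seeds themselves, i.e.\ proving that the intended combinatorial polytope really is realised in $H^n$ with the prescribed dihedral angles. Via Vinberg's theory this amounts to writing down the Gram matrix $G=\bigl(\langle e_i,e_j\rangle\bigr)$ of outward unit normals to the facets, normalised by $\langle e_i,e_i\rangle=1$, with $\langle e_i,e_j\rangle=-\cos(\pi/k_{ij})$ when facets $i,j$ meet at dihedral angle $\pi/k_{ij}$ and $\langle e_i,e_j\rangle\le -1$ when they are disjoint, and then checking that $G$ has Lorentzian signature $(n,1)$ and that the associated hyperplane arrangement cuts out a genuinely \emph{compact} cell---a positivity condition on the principal minors of $G$. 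In dimension three this is packaged cleanly by Andreev's theorem, but in dimensions four through six it is a bona fide computation; that, together with the bookkeeping needed to verify that reflectivity of the distinguished facet survives each doubling, is where the real work lies, the remaining steps being elementary hyperbolic geometry and linear algebra.
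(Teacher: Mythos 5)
Your argument is exactly the doubling-across-a-right-angled-facet construction that the paper sketches in the paragraph following the theorem statement (where it is attributed to Potyagilo--Vinberg and Allcock), with the same seeds: the right-angled $120$-cell in $H^4$, and Bugaenko-type polytopes fed into Allcock's doubling for $H^5$ and $H^6$. Since the paper treats this as a quoted result and gives only a brief sketch, your more detailed account of reflective facets, Gram-matrix verification, and the sharpness of $n\le 4$ in the right-angled case is an expansion of the same proof rather than an alternative route.
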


We are interested in the case of 4 and 6 dimensions. The right angled 4-dimensional polytopes are constructed by Potyagilo--Vinberg \cite{potyagilo-vinberg}. Here one uses that there is a hyperbolic 120-cell with dihedral angles $\pi/2$. Gluing it to itself along a 3-face (via the reflection in that face) produces a new right-angled polytope. This procedure can be repeated giving infinitely many examples. An infinite family of 6-dimensional polytopes is constructed by Allcock \cite{allcock}. 

Doubling the 4-dimensional hyperbolic Coxeter polytopes gives infinitely many simply-connected symplectic 6-orbifolds with $c_1=0$. 

When the polytope is doubled, the hyperbolic metric extends smoothly across the 3-faces; the singularities correspond to the 2-skeleton of the polytope. To understand the singularities in the twistor space, consider the positive ``octant'' $x_1,x_2,x_3,x_4 \geq 0$ in $\R^4$, i.e., the infinitesimal model for the vertex of a right-angled Coxeter polytope. The $(x_1,x_2)$-plane lifts to two planes $L, L'$ in the twistor space, corresponding to the two compatible complex structures on $\R^4$ for which the $(x_1,x_2)$-plane is a complex line. Reflection in the $(x_1,x_2)$-plane lifts to the twistor space where it fixes $L, L'$ pointwise. The other coordinate 2-planes behave similarly. The lifts of coordinate 2-planes are some of the points with non-trivial stabiliser under the action on the twistor space generated by the reflections. The only other points with non-trivial stabiliser are those on the twistor line over the origin. This line is fixed pointwise by the composition of reflection in one 2-plane with reflection in the orthogonal 2-plane.

It follows that the orbifold singularities of the twistor space of a doubled right-angled Coxeter polytope are of two sorts. The generic orbifold point has structure group $\Z_2$ and is modelled on the quotient of $\C^3$ by $(z_1,z_2,z_3) \mapsto (z_1, -z_2,-z_3)$. These correspond either to the lifted coordinate 2-planes or the central twistor line in the above picture. Then there are isolated points in the singular locus, where three surfaces of generic orbifold points meet. Here the structure group is $(\Z_2)^2$. This is most symmetrically described via the action of $(\Z_2)^3$ on $\C^3$ where each generator changes the sign on one of the three coordinate 2-planes in $\C^3$; the diagonal $\Z_2$ acts trivially so the action factors through $(\Z_2)^2$. Such points correspond in the picture above to the intersection of the twistor line at the origin with the lifts of two orthogonal coordinate 2-planes.

The concrete description of these singularities means that it is possible to find  symplectic crepant resolutions ``by hand'', as it was for the Davis orbifold. This gives an infinite collection of simply-connected symplectic manifolds with $c_1=0$ \cite{fine-panov3}. Meanwhile, the twistor spaces of the 6-orbifolds of Allcock give an infinite collection of simply-connected symplectic Fano 12-orbifolds. It is natural to ask if these admit symplectic Fano resolutions, although this looks much harder to answer than the corresponding question for the 6-orbifolds.

On the subject of symplectic resolutions, we mention the recent work of Nieder\-krüger--Pasquotto \cite{niederkruger-pasquotto,niederkruger-pasquotto2}, which gives a systematic approach to the resolution of symplectic orbifolds arising via symplectic reduction, although they do not consider discrepancy.

\subsection{Possible diversity of symplectic 6-manifolds with $c_1=0$}

On the subject of hyperbolic orbifolds, there is a much more general existence question, which we learnt from Gromov:

\begin{question}
Is there any restriction on the manifolds which can be obtained as quotients of $H^n$ by a cocompact discrete subgroup of $\Orth(n,1)$?
\end{question}

(The subgroup is allowed to have torsion, of course.) For $n=2$ and $3$ all compact manifolds can be obtained as quotients ($n=2$ is straightforward whilst $n=3$ uses geometrization). The relationship between hyperbolic and symplectic geometry outlined here gives additional motivation to try to answer the question in dimension four. For example, can any finitely presented group be the fundamental group of such a quotient? The 4-dimensional quotients give rise to 6-dimensional symplectic orbifolds with $c_1=0$ and in this way one might ambitiously hope to approach the problem of which groups can appear as the fundamental group of a symplectic manifold with $c_1=0$. Of course, even if one had the orbifolds, they would still need to be resolved. In algebraic geometry, crepant resolutions of threefolds always exist, thanks to the work of Bridgeland--King--Reid \cite{BKR}. Even independently of the hyperbolic orbifold approach described here, it would be interesting to know what holds in the symplectic setting. 

\subsection{Rational curves in hyperbolic twistor spaces}

We conclude our discussion of the symplectic examples with a brief look at the genus zero Gromov--Witten invariants of hyperbolic twistor spaces.

\begin{lemma}
Let $\C\P^1 \to Z_{2n}$ be pseudoholomorphic with respect to the $\SO(2n,1)$-invariant almost complex structure defined above. Then the image lies entirely in a fibre of $Z_{2n} \to H^{2n}$.
\end{lemma}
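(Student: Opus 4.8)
The plan is to exploit the splitting $TZ_{2n} = V \oplus H$ into complex subbundles, together with the crucial fact that the almost complex structure $J$ is the Eells--Salamon structure, which is $-J_{\mathrm{AHS}}$ on the vertical bundle $V$. The key point is a classical observation going back to Eells--Salamon: the projection $\pi \colon Z_{2n} \to H^{2n}$ is \emph{anti-holomorphic} on each fibre in a suitable sense, or more precisely, any $J$-holomorphic curve must be tangent to $V$. First I would recall that a point $z \in Z_{2n}$ lying over $x \in H^{2n}$ is by definition a complex structure $j_z$ on $T_x H^{2n}$, and that the derivative $d\pi_z \colon T_z Z_{2n} \to T_x H^{2n}$ intertwines $J$ on the horizontal subspace $H_z$ with $j_z$ on $T_x H^{2n}$ when one uses the Atiyah--Hitchin--Singer structure; for the Eells--Salamon structure the horizontal part of $J$ is unchanged, so $d\pi_z$ is still complex-linear from $(H_z, J) \to (T_x H^{2n}, j_z)$.

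Now suppose $u \colon \C\P^1 \to Z_{2n}$ is $J$-holomorphic and consider the composition $f = \pi \circ u \colon \C\P^1 \to H^{2n}$. At each point $p \in \C\P^1$, write $du_p = (du_p^V, du_p^H)$ according to the splitting $T Z = V \oplus H$. Then $df_p = d\pi_{u(p)} \circ du_p^H$, and since $du$ is complex-linear and $d\pi$ restricted to $H$ is complex-linear, $f$ is a $j_{u(p)}$-holomorphic map in the appropriate sense. The essential step is then to observe that $H^{2n}$ carries no nonconstant holomorphic maps from $\C\P^1$ with respect to \emph{any} almost complex structure compatible with its negatively-curved Riemannian metric: more concretely, the hyperbolic metric on $H^{2n}$ is complete with non-positive sectional curvature, so any harmonic (in particular pseudoholomorphic) $S^2 \to H^{2n}$ is constant, by the standard Eells--Sampson / Gromov argument --- a pseudoholomorphic sphere would be a minimal sphere, and $H^{2n}$ admits no compact minimal submanifolds since it is Hadamard (the distance function from any point is strictly convex along geodesics, hence has no interior maximum on a compact minimal submanifold). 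Therefore $f$ is constant, i.e. the image of $u$ lies in a single fibre $F_x = \pi^{-1}(x)$.

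I expect the main technical obstacle to be pinning down precisely the sense in which $d\pi$ is complex-linear on the horizontal bundle for the Eells--Salamon structure, and checking that the reversal of $J$ on $V$ does not interfere: one must verify that the vertical component $du^V$ being $J$-antiholomorphic-on-$V$ still forces $f = \pi \circ u$ to be honestly pseudoholomorphic downstairs (it does, because $f$ only sees the horizontal derivative). Once that is clean, the convexity/Hadamard argument ruling out nonconstant pseudoholomorphic spheres in $H^{2n}$ is completely standard and can be cited; alternatively one can argue directly that $f^*(\text{hyperbolic metric})$ would make $\C\P^1$ into a branched minimal surface in a complete simply-connected manifold of non-positive curvature, forcing $f$ constant by the maximum principle applied to the (subharmonic) distance-squared function $\tfrac12 d(\cdot, x_0)^2 \circ f$. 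Finally, since the fibre $F_x \cong \SO(2n)/\U(n)$ is a compact Hermitian symmetric space, the image of $u$ is a genuine rational curve there, which is the expected conclusion and the starting point for computing the genus-zero Gromov--Witten invariants.
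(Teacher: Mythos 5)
Your high-level strategy is exactly the paper's: project via $\pi$, argue that $f = \pi\circ u$ is a (branched) minimal surface in $H^{2n}$, and conclude by the Hadamard/convexity argument that it is constant. However, there is a genuine gap in the middle step. From the complex-linearity of $du$ and the fact that $d\pi|_H$ intertwines $J|_H$ with $j_{u(p)}$, what you actually obtain is that $df_p$ is complex-linear from $(T_p\C\P^1,j)$ to $(T_{f(p)}H^{2n}, j_{u(p)})$ where $j_{u(p)}$ is a \emph{pointwise-varying} orthogonal complex structure. Since each $j_{u(p)}$ is metric-compatible, this gives \emph{conformality} of $f$ — but not harmonicity, and conformality alone is far from sufficient: a conformal parametrisation $S^2 \to H^3 \subset H^{2n}$ of a geodesic sphere is conformal and non-constant, yet certainly not minimal. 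Your phrase ``honestly pseudoholomorphic downstairs'' is not meaningful here because there is no fixed almost complex structure on $H^{2n}$; and your parenthetical ``(it does, because $f$ only sees the horizontal derivative)'' addresses the wrong worry — what is missing is any control of the tension field $\tau(f)$, which involves second derivatives and is governed precisely by the behaviour of $du^V$.

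The step from conformality to minimality is the content of the Eells--Salamon theorem (and Salamon's higher-dimensional extension), which is what the paper cites: for the reversed (Eells--Salamon) almost complex structure $J = -J_{\mathrm{AHS}}|_V + J_{\mathrm{AHS}}|_H$, a $J$-holomorphic curve in the twistor space projects to a conformal \emph{harmonic} map, i.e.\ a branched minimal immersion, into the base. The proof of this is not formal: it is exactly here that the reversal of $J$ on $V$ matters (for $J_{\mathrm{AHS}}$ the statement fails unless the base satisfies curvature conditions), and one must use the structure equations relating the twist of $j_{u(p)}$ — encoded in $du^V$ — to the second fundamental form of $f$. So your proposal is on the right track and, once you replace ``pseudoholomorphic downstairs'' by a citation of Eells--Salamon/Salamon for the minimality, it becomes the paper's proof; as written, the crucial implication conformal $\Rightarrow$ minimal is asserted rather than established.
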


\begin{proof}
A theorem of Salamon \cite{salamon} (Eells--Salamon \cite{eells-salamon} in the case $n=2$) shows that the image in $H^{2n}$ of a pseudoholomorphic curve in $Z_{2n}$ is a minimal surface. Since $H^{2n}$ contains no minimal spheres the result follows.
\end{proof}

It follows that there are pairs of points in $Z_{2n}$ (or any of its compact quotients) which do not lie on a pseudoholomorphic rational curve. This is in stark contrast to the situation for Kähler Fano manifolds. Of course, we consider here a \emph{particular} almost complex structure. The symplectic definition of rational connectivity (see Li--Ruan \cite{li-ruan}) involves the non-vanishing of certain genus zero Gromov--Witten invariants. A weaker property than rational connectivity is that of being uniruled. In algebraic geometry a variety is called uniruled if each point is contained in a rational curve. Again, the definition in symplectic geometry involves a statement about genus zero Gromov--Witten invariants. Li--Ruan \cite{li-ruan} have asked if all symplectic Fano manifolds are uniruled in this sense.

We content ourselves here by mentioning that the above restriction on the image of rational curves suggests an approach to computing the genus zero Gromov--Witten invariants of the twistor spaces of hyperbolic $2n$-mani\-folds. Whilst the invariant almost complex structure is not generic, the obstruction bundle should be describable in simple terms. Indeed, using this lemma it should be possible to localise calculations to the case of a curve in the fibre of $Z_{2n}\to H^{2n}$ and exploit the action of the $\SO(2n)$ of isometries fixing the point in $H^{2n}$.

\subsection{Miles Reid's fantasy}

In \cite{reid} Miles Reid asked a question, which is now referred to as \emph{Reid's fantasy}:

\begin{question}[Reid \cite{reid}] 
Consider the moduli space of complex structures on $(S^3\times S^3)^{\#N}$ with $K\cong \mathcal O$ which are deformations of Moishezon spaces. Is it true that for large $N$ this space is irreducible?

Do all simply-connected Kähler Calabi-Yau threefolds appear as small resolutions of 3-folds with double points lying on the boundary of these moduli spaces?
\end{question}

The complex threefolds that we obtain do not appear directly in Reid's fantasy and indeed seem to be of a very different nature. There is no visible mechanism that would enable one to connect these examples in any way to Moishezon manifolds.  It seems more reasonable that the structures we construct on $2(S^3\times S^3)\#(S^2\times S^4)$ belong to an infinite family of disconnected components of complex structures with $K\cong \mathcal O$. If this were true, it would show that Reid was wise to limit himself in his fantasies when he wrote ``I aim to consider only analytic threefolds which are deformations of Moishezon spaces'' (\cite{reid}, page 331). Of course we don't know how one could try to prove (or disprove) the existence of this infinite number of connected components.

\bibliographystyle{plain}
\bibliography{hyp_geom_CY_bib}

\begin{thebibliography}{10}

\bibitem{allcock}
D.~Allcock.
\newblock Infinitely many hyperbolic {C}oxeter groups through dimension 19.
\newblock {\em Geom. Topol.}, 10:737--758 (electronic), 2006.

\bibitem{bogomolov}
F.~A. Bogomolov.
\newblock On {G}uan's examples of simply connected non-{K}\"ahler compact
  complex manifolds.
\newblock {\em Amer. J. Math.}, 118(5):1037--1046, 1996.

\bibitem{boileau-porti}
M.~Boileau and J.~Porti.
\newblock Geometrization of 3-orbifolds of cyclic type.
\newblock {\em Ast\'erisque}, (272):208, 2001.
\newblock Appendix A by Michael Heusener and Porti.

\bibitem{BKR}
T.~Bridgeland, A.~King, and M.~Reid.
\newblock The {M}c{K}ay correspondence as an equivalence of derived categories.
\newblock {\em J. Amer. Math. Soc.}, 14(3):535--554 (electronic), 2001.

\bibitem{clemens}
C.~H. Clemens.
\newblock Double solids.
\newblock {\em Adv. in Math.}, 47(2):107--230, 1983.

\bibitem{davidov}
J.~Davidov, G.~Grantcharov, and O.~Mu{\v{s}}karov.
\newblock Almost complex structures on twistor spaces.
\newblock In {\em Almost complex structures (Sofia, 1992)}, pages 113--149.
  World Sci. Publ., River Edge, NJ, 1994.

\bibitem{davidov2}
J.~Davidov, G.~Grantcharov, and O.~Mu{\v{s}}karov.
\newblock Curvature properties of the {C}hern connection of twistor spaces.
\newblock {\em Rocky Mountain J. Math.}, 39(1):27--48, 2009.

\bibitem{davis}
M.~W. Davis.
\newblock A hyperbolic {$4$}-manifold.
\newblock {\em Proc. Amer. Math. Soc.}, 93(2):325--328, 1985.

\bibitem{eells-salamon}
J.~Eells and S.~Salamon.
\newblock Twistorial construction of harmonic maps of surfaces into
  four-manifolds.
\newblock {\em Ann. Scuola Norm. Sup. Pisa Cl. Sci. (4)}, 12(4):589--640
  (1986), 1985.

\bibitem{fine-panov2}
J.~Fine and D.~Panov.
\newblock Hyperbolic 3-manifolds, flat bundles and totally real fibrations.
\newblock To appear.

\bibitem{fine-panov3}
J.~Fine and D.~Panov.
\newblock Simply-connected symplectic 6-manifolds with vanishing first {C}hern
  class and arbitrarily large {B}etti numbers.
\newblock To appear.

\bibitem{fine-panov}
J.~Fine and D.~Panov.
\newblock Symplectic {C}alabi-{Y}au manifolds, minimal surfaces and the
  hyperbolic geometry of the conifold.
\newblock {\em J. Differential Geom.}, 82(1):155--205, 2009.

\bibitem{friedman}
R.~Friedman.
\newblock Simultaneous resolution of threefold double points.
\newblock {\em Math. Ann.}, 274(4):671--689, 1986.

\bibitem{ghys}
{\'E}.~Ghys.
\newblock D\'eformations des structures complexes sur les espaces homog\`enes
  de {${\rm SL}(2, \mathbb C)$}.
\newblock {\em J. Reine Angew. Math.}, 468:113--138, 1995.

\bibitem{gordon}
C.~McA. Gordon.
\newblock Some aspects of classical knot theory.
\newblock In {\em Knot theory ({P}roc. {S}em., {P}lans-sur-{B}ex, 1977)},
  volume 685 of {\em Lecture Notes in Math.}, pages 1--60. Springer, Berlin,
  1978.

\bibitem{gromov}
M.~Gromov.
\newblock Pseudoholomorphic curves in symplectic manifolds.
\newblock {\em Invent. Math.}, 82(2):307--347, 1985.

\bibitem{guan}
D.~Guan.
\newblock Examples of compact holomorphic symplectic manifolds which are not
  {K}\"ahlerian. {II}.
\newblock {\em Invent. Math.}, 121(1):135--145, 1995.

\bibitem{huckleberry-winkelmann}
A.~T. Huckleberry and J.~Winkelmann.
\newblock Subvarieties of parallelizable manifolds.
\newblock {\em Math. Ann.}, 295(3):469--483, 1993.

\bibitem{li-ruan}
T.-J. Li and Y.~Ruan.
\newblock Symplectic birational geometry.
\newblock {\em \tt arXiv:0906.3265v1}, 2009.

\bibitem{lu-tian}
P.~Lu and G.~Tian.
\newblock The complex structures on connected sums of {$S\sp 3\times S\sp 3$}.
\newblock In {\em Manifolds and geometry ({P}isa, 1993)}, Sympos. Math., XXXVI,
  pages 284--293. Cambridge Univ. Press, Cambridge, 1996.

\bibitem{mcduff2}
D.~McDuff.
\newblock Examples of simply-connected symplectic non-{K}\"ahlerian manifolds.
\newblock {\em J. Differential Geom.}, 20(1):267--277, 1984.

\bibitem{mcduff}
D.~McDuff.
\newblock The structure of rational and ruled symplectic {$4$}-manifolds.
\newblock {\em J. Amer. Math. Soc.}, 3(3):679--712, 1990.

\bibitem{niederkruger-pasquotto}
K.~Niederkr\"uger and F.~Pasquotto.
\newblock Resolution of symplectic cyclic orbifold singularities.
\newblock {\em \tt arXiv:0707.4141v2}, 2007.

\bibitem{niederkruger-pasquotto2}
K.~Niederkr\"uger and F.~Pasquotto.
\newblock Desingularisation of orbifolds obtained from symplectic reduction at
  generic coadjoint orbits.
\newblock {\em \tt arXiv:0902.0149v1}, 2009.

\bibitem{potyagilo-vinberg}
L.~Potyagailo and E.~Vinberg.
\newblock On right-angled reflection groups in hyperbolic spaces.
\newblock {\em Comment. Math. Helv.}, 80(1):63--73, 2005.

\bibitem{ratcliffe-tschantz}
J.~G. Ratcliffe and S.~T. Tschantz.
\newblock On the {D}avis hyperbolic 4-manifold.
\newblock {\em Topology Appl.}, 111(3):327--342, 2001.

\bibitem{reid}
M.~Reid.
\newblock The moduli space of {$3$}-folds with {$K=0$} may nevertheless be
  irreducible.
\newblock {\em Math. Ann.}, 278(1-4):329--334, 1987.

\bibitem{reznikov}
A.~G. Reznikov.
\newblock Symplectic twistor spaces.
\newblock {\em Ann. Global Anal. Geom.}, 11(2):109--118, 1993.

\bibitem{salamon}
S.~M. Salamon.
\newblock Minimal surfaces and symmetric spaces.
\newblock In {\em Differential geometry ({S}antiago de {C}ompostela, 1984)},
  volume 131 of {\em Res. Notes in Math.}, pages 103--114. Pitman, Boston, MA,
  1985.

\bibitem{smith-thomas-yau}
I.~Smith, R.~P. Thomas, and S.-T. Yau.
\newblock Symplectic conifold transitions.
\newblock {\em J. Differential Geom.}, 62(2):209--242, 2002.

\bibitem{taubes}
C.~H. Taubes.
\newblock The {S}eiberg-{W}itten and {G}romov invariants.
\newblock {\em Math. Res. Lett.}, 2(2):221--238, 1995.

\bibitem{tian}
G.~Tian.
\newblock Smoothing {$3$}-folds with trivial canonical bundle and ordinary
  double points.
\newblock In {\em Essays on mirror manifolds}, pages 458--479. Int. Press, Hong
  Kong, 1992.

\bibitem{wall}
C.~T.~C. Wall.
\newblock Classification problems in differential topology. {V}. {O}n certain
  {$6$}-manifolds.
\newblock {\em Invent. Math. 1 (1966), 355-374; corrigendum, ibid}, 2:306,
  1966.

\bibitem{welschinger}
J.-Y. Welschinger.
\newblock Open strings, {L}agrangian conductors and {F}loer functor.
\newblock {\em \tt arXiv:0812.0276v1}, 2009.

\end{thebibliography}

{\small \noindent {\tt joel.fine@ulb.ac.be }} \newline
{\small D\'epartment de Math\'ematique,
Universit\'e Libre de Bruxelles CP218,
Boulevard du Triomphe,
Bruxelles 1050,
Belgique.}

{\small \noindent {\tt d.panov@imperial.ac.uk }} \newline
{\small Department of Mathematics,
Imperial College London,
South Kensington Campus, 
London SW7 2AZ, United Kingdom.}

\end{document}